\providecommand{\abs}[1]{\lvert#1\rvert}% absolute value
\newtheorem{lemma}{Lemma}[section]
\newtheorem{proposition}[lemma]{Proposition}
\newtheorem{theorem}[lemma]{Theorem}
\newtheorem{corollary}[lemma]{Corollary}
\newtheorem{definition}[lemma]{Definition}
\newtheorem{remark}[lemma]{Remark}
\let\matheu=\EuScript
\newcommand{\cl}{\text{cl}}
\newcommand{\FS}{\text{FS}}
\newcommand{\Coh}{\text{Coh}}
\newcommand{\Perf}{\text{Perf}}
\newcommand{\dg}{\text{dg}}
\newcommand{\D}{\text{D}}
\newcommand{\id}{\text{id}}
\tikzset{every picture/.style={line width=0.75pt}}
\title{SYZ for index $1$ Fano hypersurfaces in projective space}
\author{Mohamed El Alami}
\begin{document}
\maketitle
\begin{abstract}
    We study homological mirror symmetry of the singular hypersurface $X_0=V(t^{n+1}-x_0\dotsi x_n)\subseteq\mathbb{P}^{n+1}$. Following an SYZ type approach, we produce an LG-model, whose Fukaya-Seidel category recovers line bundles on $X_0$. As a byproduct of our approach, we answer a conjecture of N. Sheridan about generating the small component of the Fukaya category of the \emph{smooth} index 1 Fano hypersurface in $\mathbb{P}^{n+1}$, without bounding co-chains.
\end{abstract}
\tableofcontents
\newpage
\section{Introduction}
In his seminal work \cite{sheridan-fano-mirror-symmetry}, N. Sheridan studied homological mirror symmetry for all Fano hypersurfaces $X_d$ of degree $d$ in projective space $\mathbb
{P}^{n+1}$, where $1\leq d \leq n+1$. When $d$ is fixed, all such hypersurfaces are symplectomorphic and that makes the $A$-side. The $B$-side in his work is a Landau-Ginzburg model $(Y_d,W_d)$, and the main theorem of \cite{sheridan-fano-mirror-symmetry} is an exact equivalence of triangulated categories:
\begin{equation}\label{sheridan-HMS}
    \text{D}^{\pi}\text{Fuk}(X_d) \cong \text{D}^b_{\text{sing}}(Y^{}_d,W^{}_d).
\end{equation}

The key component of (\ref{sheridan-HMS}) is a chain of Lagrangian spheres in $X_d$ that N. Sheridan constructs building upon his earlier work in \cite{sheridan-pair-of-pants,Sheridan-cy-mirror-symmetry}. As these Lagrangians are geometrically rigid, he resorts to studying their algebraic deformations using weak bounding co-chains in order to compute the mirror LG-model $(Y_d,W_d)$.

In the present work, we mostly investigate the other direction of mirror symmetry, i.e when $X_d$ is the $B$-side. In doing so, we explore a more direct approach following the lines of Strominger-Yau-Zaslow. We limit our attention to the index $1$ Fano case, i.e. $d=n+1$. 

There is a simple construction of a \emph{partial} SYZ-fibration on $X_{n+1}\subseteq \mathbb{P}^{n+1}$, which is obtained by projecting away from a point to a hyperplane $\mathbb{P}^n\subseteq \mathbb{P}^{n+1}$. When the branch locus is sufficiently close to the toric boundary of $\mathbb{P}^n$, one can lift some of the Clifford tori $L_\cl\subseteq \mathbb{P}^n$ to Lagrangian tori $L\subseteq X_{n+1}$.
Though it is partial, this fibration can be made arbitrarily large by pushing the branch locus of the projection closer to the toric boundary of $\mathbb{P}^n$. At its limit, this process degenerates $X_{n+1}$ to a singular toric hypersurface $X_0$, which has the following defining equation in homogeneous coordinates:
\begin{equation*}
    X_0 = V(t^{n+1}-x_0\dotsi x_n)\subseteq \mathbb{P}^{n+1}.
\end{equation*}

Our first main result is a computation of the super-potential $W:(\mathbb{C}^*)^n\rightarrow \mathbb{C}$ associated with this partial SYZ-fibration. The Laurent polynomial $W$ packages all the counts of holomorphic discs in $X_{n+1}$, bounded by Lagrangian fibers, and whose of Maslov index is $2$. 

\begin{theorem}
There is a partial SYZ-fibration on $X_{n+1}$ whose associated super-potential has the formula:
\begin{equation*}
    W = \frac{(1+y_1+\dots+y_n)^{n+1}}{y_1\dotsi y_n} - (n+1)!.
\end{equation*}
\end{theorem}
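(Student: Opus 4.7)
The plan is to exploit the projection $\pi:X_{n+1}\to\mathbb{P}^n$ from a generic point $p\notin X_{n+1}$, which presents $X_{n+1}$ as a degree $n+1$ branched cover of $\mathbb{P}^n$ ramified along a hypersurface $B\sim(n+1)H$. By deforming in the family limiting to $X_0$, the branch divisor $B$ may be made arbitrarily close to the toric boundary $\partial\mathbb{P}^n$, so that the Clifford tori $L_{\cl}\subset(\mathbb{C}^*)^n\subset\mathbb{P}^n$ lift through the unramified part of $\pi$ to Lagrangian tori $L\subset X_{n+1}$ — the fibers of the partial SYZ fibration. The superpotential is then computed by enumerating Maslov $2$ holomorphic discs $u:(D^2,\partial D^2)\to(X_{n+1},L)$.

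The ramification formula $K_{X_{n+1}}=\pi^*K_{\mathbb{P}^n}+R$, combined with $-K_{X_{n+1}}=\pi^*H_{\mathbb{P}^n}$ and $-K_{\mathbb{P}^n}=(n+1)H_{\mathbb{P}^n}$, gives $\mu(\pi\circ u)=(n+1)\mu(u)$. Hence each Maslov $2$ disc $u$ projects to a Maslov $2(n+1)$ disc $v=\pi\circ u$ in $\mathbb{P}^n$ bounded by $L_{\cl}$. By Cho--Oh's classification of holomorphic discs in toric Fanos bounded by the Clifford torus, such $v$ are indexed by multi-indices $(a_0,\dots,a_n)\in\mathbb{Z}_{\geq 0}^{n+1}$ with $\sum a_i=n+1$, where $a_i$ is the intersection multiplicity of $v$ with the $i$-th toric divisor, and each class has a unique holomorphic representative up to reparameterization.

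The boundary class of any lift is pinned down by the lattice inclusion $\pi_*:H_1(L)\hookrightarrow H_1(L_{\cl})$: in natural bases this map has matrix $I+J$ of determinant $n+1$, so the unique pre-image of $\partial v=(a_1-a_0,\dots,a_n-a_0)$ is $\partial u=(a_1-1,\dots,a_n-1)$, yielding the monomial $y_1^{a_1-1}\cdots y_n^{a_n-1}$. I then claim that for each $(a_0,\dots,a_n)\neq(1,\dots,1)$ there are $\binom{n+1}{a_0,\dots,a_n}$ Maslov $2$ disc classes projecting to $v$, whereas for $(a_0,\dots,a_n)=(1,\dots,1)$ the would-be lifts are not genuine discs — a Riemann-Hurwitz calculation yields Euler characteristic $(n+1)(1-n)$ for the relevant branched cover of $D^2$, which is that of a disc only for $n=0$. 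Summing by the multinomial theorem gives $(1+y_1+\cdots+y_n)^{n+1}/(y_1\cdots y_n)-(n+1)!$.

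The main obstacle is to establish the multiplicity $\binom{n+1}{a_0,\dots,a_n}$ for the non-degenerate classes. I would tackle this by passing to the degeneration $X_0$, where the cyclic $\mathbb{Z}/(n+1)$-structure of the cover and the (singular) toric presentation of $X_0$ allow one to match disc-lifts with combinatorial sheet-assignments of the $n+1$ branch-point crossings; stability of the count as $X_0\rightsquigarrow X_{n+1}$ then rests on invariance of open Gromov-Witten invariants along the smoothing, together with transversality and compactness analysis at the singular fiber — the technical heart of the argument.
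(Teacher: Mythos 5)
Your overall strategy coincides with the paper's (project to $\mathbb{P}^n$, lift Clifford tori over a nearly degenerate branch divisor, push Maslov~$2$ discs down to Maslov~$2(n+1)$ discs, sum multinomial counts, and explain the $-(n+1)!$ by the vanishing of the spherical class), but the central enumeration is not established, and the two claims you lean on are incorrect. First, Cho--Oh does \emph{not} give a unique holomorphic representative per class: in a class $\alpha=(a_0,\dots,a_n)$ the discs are tuples of Blaschke products of degrees $a_i$, a positive-dimensional family once $\mu>2$. Second, the multinomial $\binom{n+1}{a_0,\dots,a_n}$ is not a number of ``Maslov $2$ disc classes projecting to $v$'': for $n>2$ the pushforward on $H_2$ is injective, so there is a unique class upstairs over each $\alpha$; the multinomial is a \emph{disc count} through a generic boundary point. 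The missing idea that makes the count tractable is the tangency condition: a Maslov~$2$ disc $u$ upstairs meets the ramification divisor once, so its projection $v$ meets $V(f)$ at a \emph{single} point with contact order $n+1$ (in the paper, $j_{n,0}(f\circ v)=0$), and only such discs lift (then with exactly $n+1$ disc lifts). A disc crossing $V(f)$ at $n+1$ distinct points --- the configuration your ``sheet-assignments of the $n+1$ branch-point crossings'' and your Riemann--Hurwitz computation $\chi=(n+1)(1-n)$ refer to --- never lifts to a disc, in \emph{any} class; so that computation neither singles out the spherical class nor rules out what actually has to be ruled out, namely full-tangency discs in class $(1,\dots,1)$. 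The paper excludes those by Gromov compactness plus the Blaschke structure: a disc in class $(1,\dots,1)$ tangent to order $n$ to $V(f_0)$ would have all components proportional to $z$ and hence be constant, so for $f$ near $f_0$ the tangency moduli space in the spherical class is empty.

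With the tangency condition in place, the paper obtains the multinomial as the degree of an evaluation/jet map on the space of tangency discs: parametrizing $\matheu{M}(L_{\cl},\alpha)$ by Blaschke factors, the map factors through the product map $\mathscr{E}_{\alpha_0,1}\times\dots\times\mathscr{E}_{\alpha_n,1}\to\mathscr{E}_{n+1,1}$, whose degree is the partition count $\frac{(n+1)!}{\alpha_0!\cdots\alpha_n!}$, times a degree-one jet map; this is then tied to the honest counts $m_{0,\beta}(L)$ upstairs by a properness statement, a transversality theorem comparing the linearized operators upstairs and downstairs through a short exact sequence of sheaves, and a cobordism argument removing the perturbation. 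Your proposed substitute --- matching lifts combinatorially on the singular $X_0$ and invoking ``invariance of open Gromov--Witten invariants along the smoothing'' with analysis at the singular fiber --- is exactly the part that cannot be quoted: disc counts on the singular $X_0$ are not defined in your setup, open invariants are not deformation-invariant off the shelf, and the paper deliberately avoids this by working only with smooth $X_f$, $f$ close to $f_0$. As written, the multiplicity $\binom{n+1}{a_0,\dots,a_n}$, the regularity of the count, and the vanishing for the spherical class are all asserted rather than proved, and these are the substance of the theorem.
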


We note that, up to the $-(n+1)!$ translation term, this result agrees with the expected Hori-Vafa mirror for the toric hypersurface $X_0$.

Our counts of Maslov index $2$ discs shed some light on a question regarding the HMS equivalence in (\ref{sheridan-HMS}). To put it in context, recall that $\text{D}^{\pi}\text{Fuk}(X_{n+1})$ splits into components corresponding with the eigenvalues of quantum multiplication by $c_1(TX_{n+1})$. There are two such eigenvalues:
A small one $w_s$ which is a non-degenerate singularity in the mirror, and a big one $w_b$ which is a more complicated singularity. The statement in (\ref{sheridan-HMS}) is therefore made of an equivalence over the small eigenvalue (also called the small component), and another one over the big eigenvalue (similarly called the big component). Sheridan's Lagrangian spheres naturally see the big component, and there they generate. However, in order to get them to see the small component, they require algebraic deformations using weak bounding co-chains. At the end of his paper \cite{sheridan-fano-mirror-symmetry}, conjecture B.2, the author contemplates the possibility of covering the small component using honest monotone Lagrangians without bounding co-chains. It turns out that the \emph{partial} SYZ fibration we produce has a central monotone fiber, and we use it to show the following result.

\begin{theorem}\label{HMS2}
The smooth index $1$ Fano hypersurface $X_{n+1}\subseteq\mathbb{P}^{n+1}$ contains a monotone Lagrangian torus that split-generates the small component of its Fukaya category.
\end{theorem}

Note that the case $n=2$ of the theorem above has been established in the work of D. Tonkonog and J. Pascaleff on Lagrangian mutations, see \cite{Pascaleff-Tonkonog}.

Next, we view the super-potential $W$ as the $A$-side, and we study a homological mirror symmetry correspondence between the singular toric limit $X_0$, and the Landau-Ginzburg model $((\mathbb{C}^*)^{n},W)$. Our results in this direction can be summarized as follows.

\begin{theorem}\label{HMS 1}
There is a collection of Lefschetz thimbles $L_i$ in $((\mathbb{C}^*)^n,W)$ such that:
\begin{equation*}
    HW(L_i,L_j) \simeq \hom_{X_0}(\matheu{O}_{X_0}(i),\matheu{O}_{X_0}(j)).
\end{equation*}
Furthermore, the isomorphisms above are compatible with the relevant product structures. 
\end{theorem}

Our approach relies on understanding how the branched covering map $\phi:X_0\rightarrow \mathbb{P}^n$ corresponds (under mirror symmetry) to an \emph{unbranched} quotient map:
\begin{equation*}
\pi: ((\mathbb{C}^*)^n,W_\cl)\rightarrow ((\mathbb{C}^*)^n,W),    
\end{equation*}
where $W_\cl$ is the super-potential associated with a Clifford torus in projective space $L_\cl\subseteq \mathbb{P}^n$.
\vspace{.5cm}\\
\noindent \textbf{Outline of the paper.} In section 2, we recall some facts about Maslov classes, their behavior with respect to anti-canonical divisors and branched coverings. We use these ideas to construct a monotone Lagrangian torus $L$ in a \emph{nearby} smoothing of the singular hypersurface $X_0$. In section 3, we compute the super-potential $W$ associated with $L$. This computation has two parts. First, we make an educated \emph{guess} of the correct count $m_{0,\beta}(L)$, by mapping the relevant Maslov index $2$ discs down to projective space $\mathbb{P}^n$, using the cyclic covering map $\phi: X_0\rightarrow \mathbb{P}^n$. Then, we explain a transversality argument that confirms that our guesses are indeed actual counts of Fredholm regular curves. In section 4, we view the smooth index $1$ Fano hypersurface as the A-side, and we compute Fukaya's $A_{\infty}$-algebra associated with the monotone Lagrangian torus $L$. We show in particular that $L$ split-generates the small component. In section 5, the super-potential $W$ is placed on the A-side. We compute the (partially) wrapped Floer cohomology of Lagrangian thimbles in the Fukaya-Seidel category associated with $W$, and we explain how they correspond with line bundles on $X_0$. \\

\noindent \textbf{Acknowledgement.} The author would like to express his gratitude to Mark McLean for his guidance and influence, and especially for suggesting the work of Cieleback and Mohnke in \cite{CM}, which is the original inspiration for this project.

%%%%%%%%%%%%%%%%%%%%%%%%%%
%%%%%%%%%%%%%%%%%%%%%%%%%%
%%%%%%%%%%%%%%%%%%%%%%%%%%

\section{Construction of Lagrangian tori}

\subsection{Topological preliminaries}
%%%%%%%%%%%%%%%%%%
\subsubsection{Intersection numbers}
%%%%%%%%%%%%%%%%%%
We begin by recalling, and setting notation for intersection numbers as this will be used extensively throughout this section. Let $X$ be a smooth oriented compact manifold, and let $Y\subseteq X$ be a codimension $2$ submanifold. We always think of $Y$ as the zero set of a smooth section $s\in \Gamma(X,\mathscr{L})$ of a complex line bundle $\mathscr{L}\rightarrow X$. Let $u:\Sigma \rightarrow X$ be a smooth map from a compact Riemann surface $\Sigma$, such that:
\begin{equation}\label{transversality1}
   u(\partial \Sigma)\cap Y = \emptyset.
\end{equation}
The intersection number $u\cdot Y$ is defined to be the signed counted of zeroes of the restriction $u^*s$ of the section $s$ to $\Sigma$. This may require a small perturbation of $u$ to ensure that the pullback $u^*s$ is transverse to the zero section of $u^*\mathscr{L}\rightarrow \Sigma$. This intersection number does not change under homotopies of $u$ that preserve the boundary condition (\ref{transversality1}). When the Riemann surface $\Sigma$ has no boundary, the intersection number has the following integral formula:
\begin{equation*}
    u\cdot Y = \langle c_1(\mathscr{L}),u\rangle .
\end{equation*}
As an example of how these intersection numbers work, we present a quick proof of the Riemann-Hurwitz theorem.
Let $\phi:X\rightarrow Y$ be a finite map between smooth projective varieties with ramification locus $R$. Let $u:\Sigma \rightarrow X$ be a holomorphic map from a closed Riemann surface $\Sigma$. Then:
\begin{equation*}
    c^X_1(u) = c^Y_1(\phi\circ u) - u\cdot R,
\end{equation*}
where $c^X_1=c_1(TX)$ is the first Chern class of the tangent bundle, and:
\begin{equation*}
    c_1^X(u) = \int_\Sigma u^*c^X_1.
\end{equation*}

Indeed, the ramification locus is the zero set of the section $\wedge^n d\phi$ of the line bundle:
\begin{equation*}
   \mathscr{L} = \wedge^n TX \otimes (\wedge^n \phi^*TY)^{-1}.
\end{equation*}
Therefore:
\begin{equation*}
u\cdot R = \langle c_1(\wedge^n TX \otimes (\wedge^n \phi^*TY)^{-1}) , u \rangle,  
\end{equation*}
and the classical Riemann-Hurwitz formula follows.

This formula has a relative analogue as well: Let $L\subseteq X$ and $K\subseteq Y$ be totally real sub-manifolds such that $R\cap L=\emptyset$, and $\phi(L)\subseteq K$. Let $u:\Sigma\rightarrow Y$ be a map from a Riemann surface with boundary $\Sigma$ such that $u(\partial \Sigma)\subseteq L$. Then:
\begin{equation*}
    \mu_L^X(u) = \mu_K^Y(\phi\circ u) - 2u\cdot R,
\end{equation*}
where $\mu$ is the Maslov class, which we will recall soon. The proof is identical.

The next lemma will be used implicitly in our calculations. The proof is a direct application of (and in fact the reason we recalled) the definition of intersection numbers.
\begin{lemma}
Let $\phi:X\rightarrow Y$ be a finite map of smooth projective varieties with branch locus $H$, and let $D_X=\phi^{-1}(H)$ be its (possibly non-reduced) pre-image. Then for any disc map $u:(D,\partial D)\rightarrow X$ with $u(\partial D)\cap D_X = \emptyset$, we have:
\begin{equation*}
    u\cdot D_X = (\phi\circ u)\cdot H.
\end{equation*}\qed
\end{lemma}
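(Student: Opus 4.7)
The plan is to apply the very definition of intersection number via signed zero counts of sections, and observe that the identity reduces to the functoriality of pullback. First, I would pick a complex line bundle $L \to Y$ together with a section $s \in \Gamma(Y,L)$ whose zero locus is $H$. The pulled-back section $\phi^* s \in \Gamma(X, \phi^* L)$ then cuts out the scheme-theoretic pre-image $D_X = \phi^{-1}(H)$, vanishing to the correct multiplicity along each component where $\phi$ ramifies; this is exactly what accommodates the possible non-reducedness of $D_X$ without changing the setup.

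Next, by definition, $u \cdot D_X$ is the signed count of zeros of $u^*(\phi^* s)$ on $D$, after a small perturbation of $u$ supported in the interior achieving transversality (permitted since $u(\partial D) \cap D_X = \emptyset$, which also yields $(\phi\circ u)(\partial D) \cap H = \emptyset$). Similarly, $(\phi \circ u)\cdot H$ is the signed count of zeros of $(\phi \circ u)^* s$. But by tautological functoriality of pullback, $u^*(\phi^* s) = (\phi \circ u)^* s$ as sections of one and the same line bundle $(\phi \circ u)^* L$ on $D$, so the two signed counts literally enumerate the same set of zeros with the same signs, hence agree.

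The only point that might give pause is transversality: we need a single perturbation of $u$ that simultaneously renders both sections transverse to the zero section. But since the two sections in question are identical as sections of $(\phi\circ u)^*L$, transversality for one is transversality for the other, so this is automatic. If one prefers to bypass perturbations entirely, the identity follows just as cleanly from the cohomological formulation $u\cdot D_X = \langle c_1(\phi^* L), u\rangle = \langle \phi^* c_1(L), u\rangle = \langle c_1(L), \phi\circ u\rangle = (\phi\circ u)\cdot H$, which is nothing more than naturality of the first Chern class under pullback. In either guise, there is no substantive obstacle; the statement is essentially a restatement of the definitions once $H$ and $D_X$ are placed on the same footing as zero loci of pulled-back sections.
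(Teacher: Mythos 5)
Your proof is correct and follows exactly the route the paper intends: the paper leaves this lemma as a direct application of its definition of intersection numbers as signed zero counts of (pulled-back) sections, and your observation that $u^*(\phi^*s)=(\phi\circ u)^*s$ as sections of $(\phi\circ u)^*L$ is precisely that application, with the scheme-theoretic pullback accounting for the non-reducedness of $D_X$. Nothing further is needed.
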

%%%%%%%%%%%%%%%%%%
\subsubsection{Maslov numbers}
%%%%%%%%%%%%%%%%%%
Let $(X,\omega)$ be a K\"ahler manifold. The \emph{primary} Maslov class associated with an oriented totally real subspace $L\subseteq X$, is a $\mathbb{Z}$-module homomorphism: 
\begin{equation*}
    \mu_L^X : H_2(X,L)\rightarrow \mathbb{Z}.
\end{equation*}
For any map $u:(\Sigma,\partial \Sigma)\rightarrow (X,L)$, it is defined as a relative Euler characteristic:
\begin{equation*}
    \mu^X_L(u)=\chi ((\wedge_{\mathbb{C}}^n u^*TX)^{\otimes 2},(\wedge_{\mathbb{R}}^n u^*TL)^{\otimes 2}).
\end{equation*}
This means counting zeros of a generic section of the complex line bundle $(\wedge_{\mathbb{C}}^n u^*TX)^{\otimes 2}$ over $\Sigma$, whose restriction to $\partial \Sigma$ belongs to the real sub-bundle $(\wedge_{\mathbb{R}}^n u^*TL)^{\otimes 2}$. In particular, when $u$ is the class of a closed Riemann surface, the Maslov number is twice the Chern number:
\begin{equation*}
    \mu^X_L(u) = 2\langle c_1(X),[u] \rangle.
\end{equation*}

In our context, it will be equally important to consider a \emph{secondary} Maslov class:
\begin{equation}\label{secondary-maslov-class}
    \eta^{X,H}_L : H_1(L,\mathbb{Z}) \rightarrow \mathbb{Q}.
\end{equation}
This one is more relevant in the complement of a hypersurface $H\subseteq X$, that is a multiple of an anti-canonical divisor. In other words:
\begin{equation*}
    \matheu{O}(H)=K_X^{-N},
\end{equation*}
for some positive integer $N$. To construct it, chose a smooth trivialization $s$ of $K_{X\backslash H}^{-N}$ and an orientation $n$-form $\alpha$ for $L$ (The orientation assumption is not necessary but it simplifies the discussion a bit). We can compare the two trivializations using the embedding $L\hookrightarrow X$ and in doing so, we obtain an argument function:
\begin{align*}
    \arg_L: L &\rightarrow \mathbb{C}^*\\
            x &\mapsto \alpha^{\otimes N}/s.
\end{align*}
Set $A=X\backslash H$, then the \emph{secondary} Maslov class of the pair $(A,L)$, viewed as a cohomology element $\eta^A_L\in H^1(L,\mathbb{Q})$ is:
\begin{equation*}
    \eta_L^{A} = \frac{2}{N}\arg_L^*(d\theta).
\end{equation*}
Note that the compactification $X$ of $A$ plays no role in the construction so far; all we needed is an affine variety whose $c_1(A)$ is torsion. Assuming $L$ is connected, the class we have constructed only depends on the choice of the trivialization $s$, which is sometimes called a \emph{grading} for $A$ (see \cite{PL-theory}, for instance).

\begin{lemma}\label{Maslov-from-cover-to-base}
Let $\phi:A\rightarrow B$ be an unbranched covering map of smooth affine varieties with $Nc_1(B)=0$, and let $L\subseteq A$ and $K\subseteq B$ be totally real sub-manifolds such that $\phi(L)\subseteq K$. Then we have:
\begin{equation*}
    \phi^*(\eta^B_K)=\eta^A_L,
\end{equation*}
for appropriately chosen trivializations.
\end{lemma}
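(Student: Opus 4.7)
The plan is to exploit the fact that an \'etale map is a pointwise isomorphism on tangent spaces, so both the grading on $A$ and the orientation on $L$ can be chosen to pull back from the corresponding data on $B$ and $K$. Given a trivialization $s_B$ of $K_B^{-N}$, the identification $d\phi : TA \xrightarrow{\sim} \phi^*TB$ induces $K_A^{-N} \cong \phi^*K_B^{-N}$, and the ``appropriately chosen'' grading on $A$ will be $s_A := \phi^*s_B$ (this also verifies that $Nc_1(A)=0$, so that $\eta^A_L$ is defined at all). Since $\phi|_L : L \to K$ is a local diffeomorphism, I will analogously transport an orientation $n$-vector field $\alpha_K$ on $K$ to one $\alpha_L$ on $L$ via the isomorphism $d\phi|_L$.

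With these compatible choices in place, the argument function that enters the definition of $\eta$ factors through $\phi|_L$: for any $x\in L$,
\begin{equation*}
\arg_L(x) \;=\; \frac{\alpha_L^{\otimes N}}{s_A}\bigg|_x \;=\; \frac{\alpha_K^{\otimes N}}{s_B}\bigg|_{\phi(x)} \;=\; \arg_K(\phi(x)),
\end{equation*}
using $s_A = \phi^*s_B$ together with the correspondence $\alpha_L\leftrightarrow\alpha_K$ under $d\phi|_L$. Consequently,
\begin{equation*}
\eta^A_L \;=\; \tfrac{2}{N}\arg_L^*(d\theta) \;=\; (\phi|_L)^*\!\bigl(\tfrac{2}{N}\arg_K^*(d\theta)\bigr) \;=\; \phi^*\eta^B_K,
\end{equation*}
an equality of $1$-forms on $L$, and hence of classes in $\mathrm{Hom}(H_1(L,\mathbb{Z}),\mathbb{Q})$.

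I do not expect a genuine obstacle; the content of the lemma is really just that each ingredient in the construction of $\eta$ is natural with respect to local isomorphisms, and \'etaleness is precisely that. The one point worth flagging is the choice of orientation on $L$: transporting $\alpha_K$ to $L$ is clean provided $L$ is connected (the setting under which, as noted in the preceding discussion, $\eta^A_L$ depends only on the trivialization), and in general one simply makes the componentwise choice. No disc or Floer-theoretic input is required at this stage --- the lemma is a naturality statement packaged for later use when we relate discs in the total space to discs downstairs via a covering.
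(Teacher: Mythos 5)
Your proposal is correct and is essentially the paper's own argument: pull back the trivialization of $K_B^{-N}$ and the orientation data along the \'etale map so that $\arg_L = \arg_K\circ\phi|_L$, from which $\phi^*(\eta^B_K)=\eta^A_L$ follows immediately. The extra remarks on connectedness and on $Nc_1(A)=0$ are fine but do not change the substance.
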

\begin{proof}
 Any choice of a trivialization of $K_B^{-N}$ can be pulled-back to a choice of trivialization for $K_A^{-N}$, and the same goes for orientations of $K$. With such choices, we ensure that $\arg_L = \phi^*\arg_K$, and the lemma follows.
\end{proof}

In the presence of a compactification $(X,H)$ of the variety $A=X\backslash H$, such that $H_1(X)=0$, it is possible to arrange for $\eta$ to be choice-independent. Instead of a trivialization of $K_A^{-N}$, one instead chooses a smooth section $s$ of $K_X^{-N}$ that is nowhere vanishing on $A$, and the previous construction results in the desired, choice-independent, Maslov class (\ref{secondary-maslov-class}). This is made evident by the next result.

\begin{lemma}\label{maslov-number-formula}
Let $X$ be a smooth projective variety, $H\subseteq X$ a hypersurface, and $L\subseteq X$ an oriented totally real submanifold such that $L\cap H = \emptyset$. Furthermore, assume that there exists a natural number $N$ such that:
\begin{equation*}
    \matheu{O}(H)=K_X^{-N}.
\end{equation*}
Then for any disc $u:(D,\partial D) \rightarrow (X,L)$, we have:
\begin{equation*}
    \mu_L^X(u) = \eta_L^{X,H}(\partial u) + \frac{2}{N} u\cdot H.
\end{equation*}
\end{lemma}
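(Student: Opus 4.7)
\medskip
\noindent\textbf{Proof plan for Lemma \ref*{maslov-number-formula}.}

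The plan is to express both sides of the formula as winding numbers of explicit $\mathbb{C}^*$-valued functions on $\partial D$, and to relate them via an ``$N$th power trick'' that converts statements about $K_X^{-N}$ (where the section $s$ of $\mathcal{O}(H)$ lives) into statements about $K_X^{-2}$ (where the Maslov class lives). Concretely, I would take $s\in\Gamma(X,K_X^{-N})$ to be the defining section of $H$, use it as the grading on $A=X\setminus H$, and pick an orientation $n$-form $\alpha$ on $L$, so that by construction $\arg_L=\alpha^{\otimes N}/s\colon L\to\mathbb{C}^*$.

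First, I would unpack the definition of $\mu_L^X(u)$ as a relative Euler number. Pick any nowhere-vanishing section $\tau$ of $u^*K_X^{-2}$ over $D$ (exists by contractibility of $D$). A standard reformulation identifies $\mu_L^X(u)$ with the degree of the boundary map
\begin{equation*}
g\colon\partial D\to\mathbb{C}^*,\qquad e^{i\theta}\mapsto \alpha^{\otimes 2}\bigl(u(e^{i\theta})\bigr)\big/\tau\bigl(u(e^{i\theta})\bigr),
\end{equation*}
since a real sub-bundle in a trivialized complex line bundle is encoded by such a phase. Then $N\mu_L^X(u)=\deg(g^N)$, where $g^N=\alpha^{\otimes 2N}/\tau^{\otimes N}$. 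The key step is to split this ratio as
\begin{equation*}
\frac{\alpha^{\otimes 2N}}{\tau^{\otimes N}} \;=\; \frac{\alpha^{\otimes 2N}}{(u^*s)^{\otimes 2}}\cdot\frac{(u^*s)^{\otimes 2}}{\tau^{\otimes N}},
\end{equation*}
both factors being $\mathbb{C}^*$-valued on $\partial D$ thanks to $L\cap H=\emptyset$. On $L$ the first factor equals $(\arg_L)^2$, so its winding number along $\partial u$ is $2\deg(\arg_L\circ u|_{\partial D})=N\cdot\eta_L^{X,H}(\partial u)$ by the very definition $\eta_L^A=\frac{2}{N}\arg_L^*d\theta$. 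For the second factor, note that $(u^*s)^{\otimes 2}/\tau^{\otimes N}$ extends to a $\mathbb{C}$-valued function on all of $D$ (both numerator and denominator being sections of the same complex line bundle $u^*K_X^{-2N}$), vanishing precisely on $u^{-1}(H)$ with multiplicities doubled by the squaring; its winding on $\partial D$ therefore equals, by the argument principle, twice the total intersection multiplicity, i.e.\ $2(u\cdot H)$. Combining:
\begin{equation*}
N\mu_L^X(u)=N\,\eta_L^{X,H}(\partial u)+2\,u\cdot H,
\end{equation*}
and dividing by $N$ gives the lemma.

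The main obstacle is not conceptual but bookkeeping: correctly identifying the relative Euler number of the bundle pair with the winding number of $g$ (including signs/orientations, since $(\wedge^n_{\mathbb{R}}TL)^{\otimes 2}$ is a squared real line bundle and thus canonically a positive ray in the complex line) and ensuring that the normalization conventions for $d\theta$ used in the definition of $\eta$ match those used when counting winding numbers along $\partial u$. Once these are fixed consistently, the argument above is essentially forced, and it uses nothing beyond the argument principle together with the contractibility of $D$ to switch between the natural trivialization $\tau$ on $D$ and the boundary trivialization $\alpha^{\otimes 2}$ coming from $L$.
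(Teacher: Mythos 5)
Your proof is correct and is essentially the paper's argument in a dual presentation: the paper extends $\arg_L\circ\partial u$ over the disc and counts zeros of the relative section $\arg_L^u\cdot s$ of the $N$-th power bundle pair, while you trivialize $u^*K_X^{-2}$ over $D$ and compute the boundary winding of its $N$-th power, splitting it into the $(\arg_L)^2$ factor (giving $N\eta_L^{X,H}(\partial u)$) and the $(u^*s)^{\otimes 2}/\tau^{\otimes N}$ factor (giving $2\,u\cdot H$). The ingredients, the $N$-th power trick, and the final bookkeeping are the same, so this matches the paper's proof.
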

\begin{proof}
We start by fixing an orientation form $\alpha$ for $L$. Let $s$ be a smooth section of $K_X^{-N}$ vanishing along $H$. Then, the secondary Maslov number of a disc $u:(D,\partial D)\rightarrow (X,L)$ is:
\begin{equation*}
    \eta_L^{X,H}(\partial u) =\frac{2}{N}\deg(\arg_L\circ\ \partial u : \partial D\rightarrow \mathbb{C}^*).
\end{equation*}
Let $\arg^u_L:D\rightarrow \mathbb{C}$ be an extension of $\arg_L\circ\ \partial u$. Then $\arg^u_L\cdot s$ is a relative section of the bundle pair:
$$
((\wedge_{\mathbb{C}}^n u^*TX)^{\otimes N},(\wedge_{\mathbb{R}}^n u^*TL)^{\otimes N}).
$$
It follows that:
\begin{align*}
    \mu^X_L(u) 
    &= \frac{2}{N}\chi ((\wedge_{\mathbb{C}}^n u^*TX)^{\otimes N},(\wedge_{\mathbb{R}}^n u^*TL)^{\otimes N}) \\
    &=\frac{2}{N}\# (\arg^u_L\cdot s)^{-1}(0)\\
    &= \frac{2}{N}\left(\deg(\arg_L\circ\ \partial u : \partial D\rightarrow \mathbb{C}^*) + u\cdot H \right).
\end{align*}
The Maslov number formula then follows.
\end{proof}

\begin{remark}
Most of this section's content has previously appeared in the literature. For example:
\begin{itemize}
    \item[-] When defining the secondary Maslov class, the choice of trivialization of (a multiple of) the canonical bundle is called a \emph{grading}, and the construction we made appears for example in P.Seidel's book \cite{PL-theory}. 
    \item[-] The Maslov number formula we produced also has analogues in the literature pertaining to mirror symmetry in log Calabi-Yau varieties, it appears for instance in D. Auroux's paper \cite{auroux-t-duality}.
\end{itemize}
\end{remark}
%%%%%%%%%%%%%%%%%%
%%%%%%%%%%%%%%%%%%
%%%%%%%%%%%%%%%%%%
\subsection{Monotone Lagrangian tori in branched covers}
%%%%%%%%%%%%%%%%%%
\subsubsection{Maslov numbers and branched covers}
Let $X$ be an $n$-dimensional smooth projective variety. It is standard that $X$ admits a finite map to projective space of the same dimension. Such a map is obtained for example by composing an embedding to $\mathbb{P}^N$, with a generic linear projection from a codimension $(n+1)$-plane. Such finite maps will (almost) always be branched, and a preliminary study of the branch locus is necessary for our purposes. We restrict ourselves to the following context:

\begin{lemma}\label{Branch-covering-setup}
Let $X$ be a smooth Fano variety and $m>0$ a positive integer such that $\abs{mK_X^{-1}}$ is very ample. Suppose we use elements of this linear system to produce a finite map:
\begin{equation*}
  \phi :  X \rightarrow \mathbb{P}^n.
\end{equation*}
Then, the branch locus $B$ of $\phi$ is a (possibly non-reduced) hypersurface of degree:
\begin{equation*}
    \deg(B) = \left(n+1-\frac{1}{m}\right)\deg(X).
\end{equation*}
\end{lemma}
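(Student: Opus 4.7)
The plan is to combine Riemann-Hurwitz with the projection formula. Since $\phi$ arises from (a linear subsystem of) the complete linear system $|{-mK_X}|$, pulling back the hyperplane class of $\mathbb{P}^n$ gives $\phi^* H = -mK_X$ as a divisor class on $X$. Applied to the finite map $\phi$, the Riemann-Hurwitz identity $K_X = \phi^* K_{\mathbb{P}^n} + R$ combined with $K_{\mathbb{P}^n} = -(n+1)H$ then identifies the class of the ramification divisor as
\begin{equation*}
R \;=\; K_X + (n+1)\phi^* H \;=\; (m(n+1) - 1)(-K_X).
\end{equation*}

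To translate this to the branch locus $B = \phi_* R \subseteq \mathbb{P}^n$, which inherits a natural (possibly non-reduced) divisor structure from the multiplicity of $R$, I would apply the projection formula:
\begin{equation*}
\deg_{\mathbb{P}^n}(B) \;=\; B \cdot H^{n-1} \;=\; R \cdot (\phi^* H)^{n-1} \;=\; (m(n+1)-1)(-K_X) \cdot m^{n-1} (-K_X)^{n-1}.
\end{equation*}
This evaluates to $(m(n+1) - 1)\,m^{n-1} (-K_X)^n$. Combined with $\deg(X) = (-mK_X)^n = m^n (-K_X)^n$, dividing yields $\deg(B) = (n+1 - 1/m)\deg(X)$ as claimed; the final algebraic step is just $(m(n+1)-1)/m = n+1-1/m$.

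The one point requiring care is that the clean Riemann-Hurwitz identity assumes $X$ smooth (or at least $\mathbb{Q}$-Gorenstein) and $\phi$ generically unramified in codimension one, which is automatic for smooth Fano $X$ and a generic linear projection from the embedding by $|{-mK_X}|$. In the applications to the singular toric hypersurfaces appearing later in the paper, the branching along each toric boundary divisor is completely explicit, so the formula can instead be checked by simply reading off the ramification multiplicities component by component—no general machinery is needed, and this is the more concrete obstacle one actually has to handle in the applications.
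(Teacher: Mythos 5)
Your proof is correct and follows essentially the same route as the paper: Riemann--Hurwitz identifies the ramification class as $R=(m(n+1)-1)(-K_X)$, and the degree of $B$ is then read off against the anticanonical polarization, with $\deg(X)=m^n(-K_X)^n$. The only cosmetic difference is that you justify $\deg(B)=\deg(R)$ via the projection formula $\phi_*R\cdot H^{n-1}=R\cdot(\phi^*H)^{n-1}$, whereas the paper views $B$ as the image of $R\subseteq\mathbb{P}^N$ under a linear projection and invokes preservation of degree --- the same intersection number either way.
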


\begin{proof}
Since $mK_X^{-1}$ is very ample, it can be used to produce an embedding $X\subseteq \mathbb{P}^N$ for some large $N$, and the finite map $\phi$ is the composition of this embedding with a linear projection. Since $B$ is the image of the ramification locus $R$ under a linear projection, we must have:
\begin{equation}\label{degree-comp}
    \deg(B\subseteq \mathbb{P}^n) = \deg(R\subseteq \mathbb{P}^N).
\end{equation}
Using the Riemann-Hurwitz formula, we have:
\begin{equation*}
    \text{pd}_X(R) = c_1(K_X\otimes \phi^*K_{\mathbb{P}^n}^{-1}),
\end{equation*}
where $\text{pd}_X$ is Poincarr\'e duality on $X$. It follows that:
\begin{align*}
    mR= (m(n+1)-1)\text{pd}_X(c_1(\matheu{O}_X(1))).
\end{align*}
But we know that $\text{pd}_X(\matheu{O}_X(1))$ is a hyperplane section of $X$, and therefore:
\begin{equation*}
    m\deg(R) = (m(n+1)-1)\deg(X).
\end{equation*}
The lemma now follows from the observation in (\ref{degree-comp}). 
\end{proof}

\begin{remark}
The degree formula above should be known in the literature but we could not find a reference for it. A famous instance of the formula is the case of a general projection of cubic surface branching over a sextic curve.
\end{remark}
We will only note the following consequence of the previous degree formula.
\begin{corollary}
Let $X$ be a Fano variety of dimension $n$. Then $X$ admits a finite branched covering map whose branch locus $B$ is of degree divisible by $n+1$. \qed
\end{corollary}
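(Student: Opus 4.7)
The plan is to apply Lemma 2.1.4 with a carefully chosen multiplier $m$ and then reduce the resulting degree formula modulo $n+1$ to read off the divisibility condition. Since $X$ is Fano, the anticanonical bundle $K_X^{-1}$ is ample, so $mK_X^{-1}$ is very ample for all sufficiently large $m$; thus we have freedom in choosing $m$ among large integers. Let $d=(K_X^{-1})^n$ denote the anticanonical degree, a positive integer. Embedding $X$ into some $\mathbb{P}^N$ via the complete linear system $|mK_X^{-1}|$, the degree of the embedded variety is
\begin{equation*}
    \deg(X\subseteq \mathbb{P}^N) \;=\; \int_X c_1(mK_X^{-1})^n \;=\; m^n d.
\end{equation*}

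Substituting this into the formula of Lemma 2.1.4 yields
\begin{equation*}
    \deg(B) \;=\; \left(n+1-\tfrac{1}{m}\right) m^n d \;=\; \bigl(m(n+1)-1\bigr)\, m^{n-1} d.
\end{equation*}
Reducing modulo $n+1$ gives $\deg(B) \equiv -m^{n-1}d \pmod{n+1}$, so it suffices to arrange $(n+1)\mid m^{n-1} d$.

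For $n\geq 2$ I would simply take $m$ to be any sufficiently large multiple of $n+1$, so that $m^{n-1}$ is divisible by $(n+1)^{n-1}$, and in particular by $n+1$. The case $n=1$ is handled separately: the only Fano variety of dimension $1$ is $\mathbb{P}^1$, whose anticanonical degree is $d=2=n+1$, so the divisibility holds for any admissible $m$. There is no real technical obstacle here; the argument is essentially bookkeeping. The only point requiring care is to remember that $\deg(X)$ in Lemma 2.1.4 refers to the degree of $X\subseteq\mathbb{P}^N$ under the embedding by $|mK_X^{-1}|$, which itself depends on $m$ via $m^n d$, rather than some $m$-independent invariant of $X$; once this is kept straight, the conclusion is immediate.
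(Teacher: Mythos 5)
Your proof is correct, and it follows the route the paper intends: feed a suitable $m$ into the degree formula of Lemma \ref{Branch-covering-setup}. In fact your write-up supplies a step the paper's bare \qed glosses over: the divisibility is \emph{not} automatic for every admissible $m$, since $\deg(B)=(m(n+1)-1)m^{n-1}d\equiv -m^{n-1}d \pmod{n+1}$ can be nonzero (e.g.\ for the quadric surface with $m=1$ one gets $\deg(B)=16$, not divisible by $3$), so one really must choose $m$ as you do — a large multiple of $n+1$ when $n\geq 2$, with $n=1$ handled by $\mathbb{P}^1$ having anticanonical degree $2$. You are also right to flag that $\deg(X)$ in the lemma means the degree under the embedding by $|mK_X^{-1}|$, i.e.\ $m^nd$, which is exactly where the $m$-dependence enters; with that bookkeeping your argument is complete.
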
 

Returning back to our branched covering morphism, let us denote by $D_X=\phi^{-1}(B)$ the (possibly non-reduced) extended ramification locus. Let $L\subseteq \mathbb{P}^n$ be a totally real torus that is disjoint from the branch locus, and let $L_X$ be (a component of) its pre-image. Then $L_X\subseteq X$ is itself a totally real torus. We would like to relate Maslov numbers of the pair $(X,L_X)$ to those of $(\mathbb{P}^n,L)$.
\begin{lemma}
Let $u:(D,\partial D)\rightarrow (X,L_X)$ be a disc map, and let $v=\phi\circ u : (D, \partial D) \rightarrow (\mathbb{P}^n,L)$ be its image in projective space. Then:
\begin{equation}\label{Maslov number formula}
    \mu^X_{L_X}(u) = \mu_L^{\mathbb{P}^n}(v)-\frac{2}{\deg(B)}\left(n+1-\frac{1}{m}\right) v\cdot B.
\end{equation}
\end{lemma}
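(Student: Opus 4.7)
The plan is to apply the Maslov number formula (Lemma \ref{maslov-number-formula}) on both ends of the branched cover, and then to identify the two resulting secondary Maslov contributions via Lemma \ref{Maslov-from-cover-to-base}. Since both the branch locus $B$ and its total preimage $D_X$ sit naturally inside anticanonical linear systems, this will reduce the comparison to the intersection identity $u\cdot D_X = v\cdot B$ from the intersection lemma.

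First I would set up the relevant line bundles. On $\mathbb{P}^n$ the hypersurface $B$ has $\matheu{O}(B)=\matheu{O}(\deg B)=K_{\mathbb{P}^n}^{-N}$ with $N=\deg(B)/(n+1)$. On $X$, the map $\phi$ is the composition of the embedding by $|mK_X^{-1}|$ with a linear projection, so $\phi^*\matheu{O}_{\mathbb{P}^n}(1)=\matheu{O}_X(1)=mK_X^{-1}$ and therefore $\phi^*K_{\mathbb{P}^n}=-m(n+1)K_X$. Consequently
\begin{equation*}
    \matheu{O}_X(D_X)=\phi^*\matheu{O}(B)=\phi^*K_{\mathbb{P}^n}^{-N}=K_X^{-Nm(n+1)}=K_X^{-m\deg(B)}.
\end{equation*}
(If $N$ fails to be integral, passing to a common integer multiple does not change the rational formula below.)

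Next, Lemma \ref{maslov-number-formula} applied to the two compactifications gives
\begin{align*}
    \mu_L^{\mathbb{P}^n}(v) &= \eta_L^{\mathbb{P}^n,B}(\partial v)+\frac{2(n+1)}{\deg(B)}\,v\cdot B, \\
    \mu_{L_X}^X(u) &= \eta_{L_X}^{X,D_X}(\partial u)+\frac{2}{m\deg(B)}\,u\cdot D_X.
\end{align*}
The restriction $\phi\colon X\setminus D_X\to \mathbb{P}^n\setminus B$ is étale by definition of the branch locus and sends $L_X$ into $L$; choosing the trivialization on $\mathbb{P}^n\setminus B$ and pulling it back as in Lemma \ref{Maslov-from-cover-to-base} yields $\eta_{L_X}^{X,D_X}(\partial u)=\eta_L^{\mathbb{P}^n,B}(\partial v)$. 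Combined with the intersection identity $u\cdot D_X=v\cdot B$, subtracting the two displayed equalities cancels the $\eta$ terms and leaves
\begin{equation*}
    \mu_{L_X}^X(u)-\mu_L^{\mathbb{P}^n}(v)=\left(\frac{2}{m\deg(B)}-\frac{2(n+1)}{\deg(B)}\right)v\cdot B=-\frac{2}{\deg(B)}\left(n+1-\frac{1}{m}\right)v\cdot B,
\end{equation*}
which is the claim.

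The main obstacle is the bookkeeping with $N$: one must verify that $\matheu{O}_X(1)$ is exactly $mK_X^{-1}$ (rather than some multiple) so that the exponents $N$ on $\mathbb{P}^n$ and $Nm(n+1)$ on $X$ come out right, and handle the possibility that $N$ is only rational by a mild rescaling argument. Beyond that, everything is a direct concatenation of the two already-established lemmas.
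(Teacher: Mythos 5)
Your proposal is correct and follows essentially the same route as the paper: apply Lemma \ref{maslov-number-formula} on both $(X,D_X)$ and $(\mathbb{P}^n,B)$, identify the secondary Maslov terms via the \'etale restriction of $\phi$ as in Lemma \ref{Maslov-from-cover-to-base}, and use $u\cdot D_X = v\cdot B$ to cancel and rearrange. The only blemish is the harmless sign slip ``$\phi^*K_{\mathbb{P}^n}=-m(n+1)K_X$'' (it should be $K_X^{m(n+1)}$), which does not propagate since your displayed computation $\matheu{O}_X(D_X)=K_X^{-m\deg(B)}$ is correct.
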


\begin{proof}
This is direct computation using the results of Lemma \ref{Maslov-from-cover-to-base} and Lemma \ref{maslov-number-formula} (applied both to $X$ and to $\mathbb{P}^n$):
\begin{align*}
    \mu^X_{L_X}(u) &= \eta_{L_X}^{X,D_X}(\partial u) + \frac{2}{m\deg(B)}u \cdot D_X\\
                   &= \eta_{L}^{\mathbb{P}^n,B}(\partial v) + \frac{2}{m\deg(B)}v\cdot B\\
                   &= \mu_L^{\mathbb{P}^n}(v)-\frac{2(n+1)}{\deg(B)}v\cdot B + \frac{2}{m\deg(B)}v\cdot B.
\end{align*}
Rearranging some of the terms results in the desired identity.
\end{proof}
%%%%%%%%%%%%%%%%%%%%%%%
\subsubsection{Weakly monotone tori}
%%%%%%%%%%%%%%%%%%%%%%%
So far, our discussion does not involve the K\"ahler structure. We keep it that way by introducing the notion of \emph{weakly monotone} totally real sub-manifolds.
\begin{definition}
Given a pair $(X,D_X)$ of a smooth projective variety together with a hypersurface, we say that a totally real sub-manifold $L_X\subseteq X\backslash D_X$ is weakly monotone, if there is a rational number $\lambda \in \mathbb{Q}$ such that for any disc $u:(D,\partial D) \rightarrow (X,L_X)$, one has:
\begin{equation*}
    \mu_{L_X}(u) = 2\lambda u\cdot D_X.
\end{equation*}
\end{definition}
\begin{remark}
This is like saying that $L_X$ is monotone with respect to a K\"ahler form that is a Dirac-Delta along $D_X$.
\end{remark}
We note that this definition only makes sense when $D_X$ is (numerically) a multiple of the anti-canonical class, and in that case, the constant $\lambda$ must be the inverse of said multiple.

The easiest way of obtaining weakly-monotone Lagrangians comes from toric geometry. We take the example of $\mathbb{P}^n$ with homogeneous coordinates $[z_0:z_1:\dots:z_n]$, which is the most relevant one to our construction. It admits a toric structure with toric boundary equal to a union of $n+1$ hyperplanes:
\begin{equation*}
    H = \bigcup_{i=0}^n \{ z_i=0\}.
\end{equation*}
The toric fibers are parametrized by vectors $\boldsymbol{r}\in \mathbb{R}_{>0}^{n+1}$, and they take the form:
\begin{equation*}
    L_{\boldsymbol{r}}=\{[z_0:\dots:z_n] \ | \ r_0^{-1}\abs{z_0}=\dots=r_n^{-1}\abs{z_n} \}.
\end{equation*}
All of these tori are totally real. To see that they are weakly monotone in $(\mathbb{P}^n,H)$, we can use a generating set of the relative homology group $H_2(\mathbb{P}^n,L_{\boldsymbol{r}})$, such as the collection of holomorphic discs given by:
\begin{align}\label{generators of relative H2}
    u_k(\boldsymbol{r}) : (D,\partial D) &\rightarrow (\mathbb{P}^n,L_{\boldsymbol{r}})\\
    z &\mapsto [r_0:\dots:r_{k}z:\dots:r_{n}].\notag
\end{align}
Note that these classes add up to the spherical class that generates $H_2(\mathbb{P}^n)$. They each have Maslov number $2$, and they each intersect $H$ exactly once. It follows that for all discs $[u]\in H_2(\mathbb{P}^n,L_{\boldsymbol{r}})$:
\begin{equation*}
    \mu_{L_{\boldsymbol{r}}}(u)=2u\cdot H,
\end{equation*}
and thus $L_{\boldsymbol{r}}\subseteq (\mathbb{P}^n,H)$ is weakly monotone. The torus $L_\cl$ corresponding to $\boldsymbol{r}=(1,1,\dots,1)$ is usually called the Clifford torus, and it is the only one among these tori that is monotone with respect to the Fubini-Study metric.

The existence of a weakly monotone torus  $L_{\boldsymbol{r}} \subseteq (\mathbb{P}^n,B)$ has an obstruction coming from the degree of the hypersurface $B$.
\begin{lemma}
Let $B\subseteq \mathbb{P}^n$ be a hypersurface. Assume there exists a vector $\boldsymbol{r}$ such that $L_{\boldsymbol{r}} \subseteq (\mathbb{P}^n,B)$ is a weakly monotone totally real torus. Then $\deg(B)$ is divisible by $n+1$.
\end{lemma}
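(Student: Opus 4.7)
The plan is to exploit the explicit generating set of discs $u_0,\dots,u_n$ constructed in equation \eqref{generators of relative H2} for $H_2(\mathbb{P}^n, L_{\boldsymbol{r}})$, and to combine two pieces of information about them: the toric Maslov count $\mu_{L_{\boldsymbol{r}}}(u_k)=2$, and the fact that $\sum_k u_k$ represents the class of a line in $H_2(\mathbb{P}^n)$.

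First I would apply the weak monotonicity hypothesis to each disc $u_k$: since $\mu_{L_{\boldsymbol{r}}}(u_k)=2$ and $\mu_{L_{\boldsymbol{r}}}(u_k)=2\lambda\,(u_k\cdot B)$ for some fixed $\lambda\in\mathbb{Q}$, the intersection number $u_k\cdot B$ is the same positive rational $1/\lambda$ for every $k$. Crucially, by the definition of the intersection number recalled in \S2.1.1, and because $L_{\boldsymbol{r}}\cap B=\emptyset$ ensures $u_k(\partial D)\cap B=\emptyset$, each $u_k\cdot B$ is an \emph{integer}.

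Next I would compute this common integer using the line class. The sum $\sum_{k=0}^{n} u_k$ is homologous in $H_2(\mathbb{P}^n)$ to the generator $[\mathbb{P}^1]$ (as observed right after \eqref{generators of relative H2}), so
\begin{equation*}
    \sum_{k=0}^n u_k\cdot B \;=\; [\mathbb{P}^1]\cdot B \;=\; \deg(B).
\end{equation*}
Combining with the previous step, $(n+1)\cdot(u_0\cdot B)=\deg(B)$, hence $u_0\cdot B=\deg(B)/(n+1)$. Since this quantity is an integer, $n+1$ divides $\deg(B)$.

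There is no real obstacle here; the only thing to be careful about is that weak monotonicity is applied consistently on the relative cycles $u_k$ rather than only on absolute classes, and that the intersection numbers $u_k\cdot B$ are genuine topological integers — both points are immediate from the setup of \S2.1.1 and the disjointness $L_{\boldsymbol{r}}\cap B=\emptyset$ built into the hypothesis that $L_{\boldsymbol{r}}\subseteq(\mathbb{P}^n,B)$.
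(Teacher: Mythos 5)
Your argument is correct and follows essentially the same route as the paper: weak monotonicity forces all the Maslov-index-$2$ discs $u_k(\boldsymbol{r})$ to have the same integer intersection number with $B$, and summing them to the line class gives $\deg(B)=(n+1)\,(u_0(\boldsymbol{r})\cdot B)$. Nothing further is needed.
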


\begin{proof}
This follows immediately from the disc classes $[u_k(\boldsymbol{r})]\in H_2(\mathbb{P}^n,L_{\boldsymbol{r}})$ adding up to a boundary-free class representing a line in $\mathbb{P}^n$. Hence:
\begin{equation*}
    \deg(B) = \sum_{i=0}^n u_k(\boldsymbol{r})\cdot H = (n+1)u_0(\boldsymbol{r})\cdot B.
\end{equation*}
Therefore $\deg(B)$ is divisible by $n+1$.
\end{proof}

The previous construction of weakly monotone tori extends to other hypersurfaces $B\subseteq\mathbb{P}^n$ that are close to (a multiple of) $H$.
\begin{definition}
We call a hypersurface $B\subseteq \mathbb{P}^n$ nearly degenerate if it is a 'small' perturbation of a multiple of $H$.
\end{definition}
We can actually quantify how small the perturbation needs to be. Let $f_0=z_0\dotsi z_n$ be the defining equation of $H$. A small perturbation of $kH$ is a hypersurface $B_f = V(f)$, whose defining is:
\begin{equation*}
    f = f_0^k + h,
\end{equation*}
where $h$ is a homogeneous polynomial of degree $d=nk$, satisfying the inequality:
\begin{equation}\label{nearly-degenerate}
    \abs{h(z_0:\dots:z_n)} < \frac{\abs{z_0}^d+\dots+\abs{z_n}^d}{n}.
\end{equation}

\begin{lemma}
Suppose that $B_f\subseteq \mathbb{P}^n$ is a nearly degenerate hypersurface. Then, the Clifford torus $L_{\cl}$ is disjoint from $B_f$ and is weakly monotone in $(\mathbb{P}^n,B_f)$.
\end{lemma}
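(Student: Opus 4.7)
The plan is to deduce both statements from the closeness of $B_f$ to the toric divisor $kH$, leveraging the weak monotonicity of $L_{\cl}$ in $(\mathbb{P}^n, H)$ already verified in the preceding discussion. Concretely, I would set up a straight-line homotopy $f_t := f_0^k + t h$ for $t \in [0,1]$ that interpolates between the toric model $f_0^k$ and the perturbation $f$, and then invoke deformation invariance of intersection numbers for discs with boundary on $L_{\cl}$.

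The first claim, disjointness, is essentially immediate from the definition: on $L_{\cl}$ we may normalize so that $|z_i| = 1$ for every $i$, which gives $|f_0^k(z)| = 1$, while the nearly degenerate bound (\ref{nearly-degenerate}) forces $|h(z)|$ to be strictly smaller on the Clifford torus, whence $f = f_0^k + h$ is nowhere vanishing on $L_{\cl}$. The same bound, applied to $t h$ in place of $h$, shows that $V(f_t) \cap L_{\cl} = \emptyset$ for every $t \in [0, 1]$. Now given any disc $u : (D, \partial D) \to (\mathbb{P}^n, L_{\cl})$, the intersection number $u \cdot V(f_t)$ is independent of $t$: after a small perturbation making the parametrized map $(z,t) \mapsto (u(z), t)$ transverse to the family $\bigsqcup_t V(f_t) \subset \mathbb{P}^n \times [0,1]$, its preimage inside $D \times [0,1]$ is a compact oriented $1$-manifold whose boundary is confined to $D \times \{0,1\}$ (since $\partial D \times [0,1]$ maps entirely into the zero-free region $L_{\cl} \times [0,1]$), and this oriented cobordism equates the signed counts at $t=0$ and $t=1$. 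At $t=1$ we obtain $u \cdot B_f$, while at $t=0$ the factorization $f_0^k = (z_0\cdots z_n)^k$ identifies $V(f_0^k)$ with the divisor $kH$, so $u \cdot V(f_0^k) = k\,(u \cdot H)$. Combining with $\mu_{L_{\cl}}(u) = 2(u \cdot H)$ from the toric computation yields
$$\mu_{L_{\cl}}(u) = 2(u \cdot H) = \tfrac{2}{k}(u \cdot B_f),$$
exhibiting $L_{\cl} \subseteq (\mathbb{P}^n, B_f)$ as weakly monotone with constant $\lambda = 1/k$.

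The argument is purely topological, so no serious analytic obstacle arises. The only step requiring some care is the bookkeeping around the homogeneous degree of $h$ versus that of $f_0^k$ when applying the bound (\ref{nearly-degenerate}) on the Clifford torus; once the exponent is read consistently so that $|h| < |f_0^k|$ holds on $L_{\cl}$ (and, after rescaling $h$ by $t$, along the whole homotopy), the remainder of the proof is exactly the homotopy-invariance argument sketched above.
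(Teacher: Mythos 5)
Your proposal is correct and takes essentially the same approach as the paper: disjointness follows from the pointwise estimate making $\abs{h}<\abs{f_0^k}$ on the Clifford torus, and weak monotonicity follows by identifying $u\cdot B_f$ with $k\,(u\cdot H)$, using that the perturbation is too small for zeros of $f\circ u$ to cross $u(\partial D)\subseteq L_{\cl}$. The paper packages this comparison as Rouch\'e's theorem applied to the explicit holomorphic generators $u_k$ of $H_2(\mathbb{P}^n,L_{\cl})$, while you deform the divisor along $f_t=f_0^k+th$ and use homotopy invariance of the intersection number for arbitrary discs---the same underlying principle (and, as you note, the constants in (\ref{nearly-degenerate}) must be read so that $\abs{h}<\abs{f_0^k}$ holds on $L_{\cl}$, a normalization the paper itself glosses over).
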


\begin{proof}
Indeed, if $[z_0:\dots:z_n]$ is an intersection point of $B_f$ and $L_\cl$, then:
\begin{align*}
    \abs{z_0}^d &= \abs{z_0\dotsi z_n}^k\\
                &= \abs{h(z_0:\dotsi:z_n)}
                < \abs{z_0}^d
\end{align*}
which is a contradiction. Therefore, $L_\cl\cap B_f=\emptyset$. Next, we compute the intersection numbers $u_k\cdot B_f$, by counting (with multiplicity) the zeros of $f\circ u_k:D\rightarrow \mathbb{C}$. Note that for any $z\in D$:
\begin{align*}
  \abs{f\circ u_k(z) - f_0\circ u_k(z)} &=\abs{h\circ u_k(z)}\\
  &<\frac{\abs{z}^d+n-1}{n}.
\end{align*}
In particular, when $z\in\partial D$, we get:
\begin{equation*}
 \abs{f\circ u_k(z) - f_0\circ u_k(z)}<\abs{f_0\circ u_k(z)}.  
\end{equation*}
It follows (by Rouch\'e's theorem) that $f\circ u_k(z)$ and $f_0\circ u_k(z)$ have the same number of zeros and therefore:
\begin{equation*}
    \mu(u_k) = 2u_k\cdot B_f.
\end{equation*}
Since the $(u_k)_k$ generate the relative homology group $H_2(\mathbb{P}^n,L_\cl)$, the statement of the lemma follows.
\end{proof}

Suppose now that we have a finite map $\phi:X\rightarrow \mathbb{P}^n$ as in the setup of Lemma \ref{Branch-covering-setup}, whose branch locus $B$ is nearly degenerate. By the previous lemma, $B$ is disjoint from $L_\cl$, so let $L_X$ be (a connected component of) its pre-image $\phi^{-1}(L_\cl)$. Then we have the following:

\begin{lemma}
The totally real torus $L_X\subseteq (X,R)$ is weakly monotone, where $R=\phi^{-1}(B)$ is the (extended) ramification locus.
\end{lemma}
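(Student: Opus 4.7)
The plan is to combine three ingredients already assembled in this subsection: the Maslov comparison formula \eqref{Maslov number formula} for a branched cover, the weak monotonicity of the Clifford torus $L_\cl$ in $(\mathbb{P}^n,B)$ proven in the preceding lemma, and the intersection identity $u\cdot R = v\cdot B$ for $v=\phi\circ u$ guaranteed by the intersection lemma at the start of the section.

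First, for an arbitrary disc $u:(D,\partial D)\rightarrow (X,L)$, I would project it down to $v=\phi\circ u:(D,\partial D)\rightarrow(\mathbb{P}^n,L_\cl)$. This is well-defined because $L\subseteq \phi^{-1}(L_\cl)$, and because $L\cap R=\emptyset$ (as $L_\cl\cap B=\emptyset$ by the nearly-degenerate hypothesis). The intersection lemma then gives
\begin{equation*}
    u\cdot R = v\cdot B.
\end{equation*}

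Next, apply the branched cover Maslov formula \eqref{Maslov number formula}:
\begin{equation*}
    \mu^X_L(u) = \mu^{\mathbb{P}^n}_{L_\cl}(v) - \frac{2}{\deg(B)}\left(n+1-\frac{1}{m}\right) v\cdot B.
\end{equation*}
Since $B$ is nearly degenerate, the previous lemma (extended by linearity from the generators $u_k$ to all of $H_2(\mathbb{P}^n,L_\cl)$) gives $\mu^{\mathbb{P}^n}_{L_\cl}(v)=2v\cdot B$. Substituting this and using the degree formula $\deg(B)=(n+1-1/m)\deg(X)$ from Lemma \ref{Branch-covering-setup}, the coefficient collapses neatly:
\begin{equation*}
    \mu^X_L(u) = 2\,v\cdot B\left(1-\frac{1}{\deg(X)}\right) = 2\lambda\, u\cdot R,
\end{equation*}
with $\lambda = (\deg(X)-1)/\deg(X) \in \mathbb{Q}$ independent of $u$, which is exactly the weak monotonicity condition.

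There is no real obstacle here; the work was done in setting up the two preceding lemmas. The only point worth verifying carefully is that the weak monotonicity relation for $L_\cl$ in $(\mathbb{P}^n,B)$ applies to the class $[v]$ and not merely to the generators $u_k$, but this is immediate since both sides are $\mathbb{Z}$-linear functionals on $H_2(\mathbb{P}^n,L_\cl)$ agreeing on a generating set.
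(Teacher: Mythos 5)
Your overall route is exactly the paper's: push a disc $u$ down to $v=\phi\circ u$, use $u\cdot R=v\cdot B$, feed the weak monotonicity of $L_\cl$ in $(\mathbb{P}^n,B)$ into the comparison formula (\ref{Maslov number formula}), and read off a single constant. The one genuine problem is the normalization you assign to that weak monotonicity. The identity $\mu_{L_\cl}(v)=2\,v\cdot B$ cannot hold unless $\deg(B)=n+1$: the generators $u_0,\dots,u_n$ each have Maslov number $2$ and their sum is the class of a line, so $\sum_k u_k\cdot B=\deg(B)$, which forces the relation $\mu_{L_\cl}(v)=\frac{2(n+1)}{\deg(B)}\,v\cdot B$ (equivalently, the constant must be the inverse of the multiple of the anticanonical class that $B$ represents, as observed right after the definition of weak monotonicity). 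In the present setting $\deg(B)=\left(n+1-\tfrac{1}{m}\right)\deg(X)$ is in general strictly larger than $n+1$, so the constant $2$ you import from the preceding lemma and the degree formula you invoke from Lemma \ref{Branch-covering-setup} are mutually inconsistent.

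Because of this, your final constant $\lambda=(\deg(X)-1)/\deg(X)$ is not the right one, and it violates the same necessary condition on the $X$-side: since $R$ is numerically $m\deg(B)$ times $K_X^{-1}$, any weak monotonicity constant must be $\lambda=\frac{1}{m\deg(B)}$. The paper's proof substitutes $\mu_{L_\cl}(v)=\frac{2(n+1)}{\deg(B)}\,v\cdot B$ into (\ref{Maslov number formula}); the two terms involving $n+1$ cancel and one is left with $\mu_L(u)=\frac{2}{m\deg(B)}\,u\cdot R$. Since the lemma only asserts the existence of some $\lambda$, your argument is repaired simply by using the correctly normalized form of the previous lemma; the structural points you make (projection of discs, the intersection identity, and linearity over a generating set of $H_2(\mathbb{P}^n,L_\cl)$) are all sound and coincide with the paper's proof.
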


\begin{proof}
The previous lemma asserts that $L_\cl \subseteq (\mathbb{P}^n,B)$ is weakly monotone. With that in mind, we can use the Maslov number formula (\ref{Maslov number formula}) to see that for any disc $u:(D,\partial D) \rightarrow (X,R)$, one has:
\begin{align*}
    \mu_L(u) 
    &= \frac{2(n+1)}{\deg(B)}v\cdot B - \frac{2}{\deg(B)}\left(n+1-\frac{1}{m}\right) v\cdot B\\
    &= \frac{2}{m\deg(B)}u.R,
\end{align*}
where $v=\phi\circ u$. It follows that $L_X\subseteq (X,R)$ is weakly monotone.
\end{proof}
%%%%%%%%%%%%%%%%%%%%%%%
%%%%%%%%%%%%%%%%%%%%%%%
\subsubsection{Partial Lagrangian fibration}
%%%%%%%%%%%%%%%%%%%%%%%
We now explain how to construct suitable K\"ahler structres on branched covers, so as to make the weakly monotone Lagrangians in our previous discussion into genuine monotone Lagrangians.
\begin{lemma}\label{Kahler-metric}
Let $(Y,\omega)$ be a K\"ahler variety with $[\omega]\in H^2(Y,\mathbb{Z})$, and let $\phi: X\rightarrow Y$ be a finite branched cover. Then, for any neighborhood $U$ of the ramification locus, there exists a K\"ahler form $\omega_X$ on $X$, and a real valued function $\rho:X\rightarrow \mathbb{R}$ with support in $U$, such that:
\begin{equation*}
    \omega_X = \phi^*\omega + dd^c\rho.
\end{equation*}
\end{lemma}

\begin{proof}
Indeed $[\omega]=c_1(\matheu{L})$ is the curvature of some ample line bundle $\matheu{L}\rightarrow Y$ with respect to some Hermitian metric. Since $\phi$ is a finite morphism, the pullback $\phi^*\matheu{L}\rightarrow X$ is necessarily ample, and therefore it admits a positively curved Hermitian metric of its own, and its curvature 2-form $\omega_X$ can be computed as:
\begin{equation*}
    \omega_X=\phi^*\omega + dd^c\psi,
\end{equation*}
where $\psi$ is the (multiplicative) difference between the new positively curved metric and the metric we pull-back from $\matheu{L}$. Now, we choose an open subset $U_1$ of $X$ between the ramification set $R$ and the open neighborhood $U$, such that:
\begin{equation*}
    R\subset U_1 \Subset U.
\end{equation*}
Then, we choose a smooth function $f:X\rightarrow \mathbb{R}$ such that $f=1$ on $U_1$ and $f=0$ outside of $U$.
We now claim that there is a constant $C$ such that:
\begin{equation*}
   \omega_{X,C} = \phi^*\omega + \frac{1}{C}dd^c(f\psi) > 0.
\end{equation*}

Indeed, as long as $C>1$, the Hermitian $2$-form $\omega_{X,C}$ is positive except possibly on $U\backslash U_1$: This is clear outside of $U$, and inside of $U_1$ it can be seen by rewriting:
\begin{equation*}
    \omega_{X,C} = \left(1-\frac{1}{C}\right)\phi^*\omega + \frac{1}{C}\left(\phi^*\omega + dd^c\psi\right).
\end{equation*}
Furthermore, in the limit $C\rightarrow \infty$, the K\"ahler form $\omega_{X,C}$ is also positive on the compact region $\overline{U\backslash U_1}$.
\end{proof}

\begin{remark}
The are versions of this proposition that appear in the literature, e.g. \cite{Sheridan-cy-mirror-symmetry}, or \cite{Auroux-branch-covers-of-cp2}. The advantage we have in our case is that we don't need to worry about the types of singularities of the branch locus.
\end{remark}

The symplectic form constructed in the previous lemma has one key property. Let $L\subseteq X$ be a Lagrangian that is disjoint from the ramification locus. Then for any disc $u:(D,\partial D)\rightarrow (X,L)$, we have an area formula:
\begin{equation*}
    \text{Area}_{\omega_X}(u) = \text{Area}_{\omega}(\phi\circ u).
\end{equation*}

We now specialize all of our previous discussion to the case of the index $1$ Fano hypersurface in projective space.

\begin{proposition}\label{partial-fibration}
Let $(X,\omega_X)\subseteq \mathbb{P}^{n+1}$ be a smooth hypersurface of degree $n+1$, viewed as a monotone symplectic manifold. Then there is a family of anti-canonical symplectic divisor $D_i\subseteq X$, and neighborhoods $D_i\subseteq U_i$ with the following properties:
\begin{itemize}
    \item The open neighborhoods are shrinking, i.e. $\text{Vol}_{\omega_X}(U_i)\rightarrow 0.$
    \item The complement $X\backslash U_i$ admits a Lagrangian torus fiber with a monotone central fiber.
\end{itemize}
\end{proposition}

\begin{proof}
We use homogeneous coordinates $x_0,\dotsi,x_{n},t$ in projective space $\mathbb{P}^{n+1}$, and we set $f_0=x_0\dotsi x_{n}$, and $H=\{t=0\}=\mathbb{P}^n$. Note that $V(f_0)\cap H$ is the usual toric boundary of the $n$-dimensional projective space $H$. We construct a nested sequence of open sets $V_{i+1}\subseteq V_i$, as the pre-images via the Logarithm map, of a shrinking sequence of open neighborhoods of the toric boundary. Then, the open sets $V_i\subseteq H$ shrink to $V(f_0)\cap H$, and the complements $H\backslash V_i$ are all fibered by Lagrangian tori for the Fubini-Study metric $\omega_H$. Next, we construct a sequence $f_i$ of regular homogeneous polynomials in $x_0,\dots,x_{n}$ of degree $n+1$ converging to $f_0$, fast enough to ensure that $V(f_i)\cap H \subseteq V_i$. Define $X_i=V(t^{n+1}-f_i)$, and set $D_i=X_i\cap H$. Forgetting the $t$ variable produces a branched covering $\phi_i: X_i\rightarrow H$ ramified along $D_i$, and the later is contained in the open set $U_i = \phi_i^{-1}(V_i)$. We then apply the construction of K\"ahler metrics in Lemma \ref{Kahler-metric} to produce a symplectic form $\omega_i$ with the property:
\begin{equation*}
    \omega_i = \phi_i^*\omega_H + d\alpha_i,
\end{equation*}
such that $\alpha$ is compactly supported in $U_i$. In particular,
\begin{equation*}
\text{Vol}_{\omega_i}(U_i)=(n+1)\text{Vol}_{\omega_H}(V_i)    
\end{equation*}
converges to $0$. Moreover, $X_i\backslash U_i$ is an unbranched covering of $H\backslash V_i$, and as such, it inherits a Lagrangian torus fibration (see Remark \ref{connected-fibers} below). As a consequence of the Maslov number formula (\ref{Maslov number formula}), and of our choice of symplectic form, the lift of the monotone Clifford torus in $(H,\omega_H)$ will then be monotone. To complete the proof of the proposition, simply use a Moser argument to trivialize the family $(X_i,\omega_i)$.
\end{proof}

\begin{remark}\label{connected-fibers}
We would like to clarify a little more on why the pre-image of a Clifford torus in an index $1$ Fano hypersurface $X_f\subseteq \mathbb{P}^{n+1}$ is connected: In fact, away from the branch locus, the map $\phi$ restricts to an unbranched cyclic covering $\hat{\phi}:X_f\backslash R \rightarrow \mathbb{P}^n\backslash B$  of degree $n+1$. But now $B\subseteq \mathbb{P}^n$ is a smooth hypersurface of degree $n+1$ as well, and it is a classical result (an application of Seifert-Van-Kampen's theorem and Poincarr\'e duality) that:
\begin{equation*}
    \pi_1(\mathbb{P}^n\backslash B) =\mathbb{Z}_{n+1}.
\end{equation*}
Therefore, the map $\hat{\phi}$ above is a universal covering map, and in particular $L_\cl\subseteq \mathbb{P}^n\backslash B$ has a connected pre-image.
\end{remark}

Because the Lagrangian fibration from the previous proposition covers most of the symplectic manifold $X$, we expect it to carry non-trivial Floer theoretic data to probe the mirror of $X$. The next section is dedicated to computing the super-potential associated with this Lagrangian fibration.
%%%%%%%%%%%%%%%%%%%%%%%%%%
%%%%%%%%%%%%%%%%%%%%%%%%%%
%%%%%%%%%%%%%%%%%%%%%%%%%%

\section{Computation of the super-potential}

Throughout this section, we will use homogeneous coordinates $[x_0:\dots:x_n]$ on projective space $\mathbb{P}^n$. Recall that the toric boundary for the classical action of $(\mathbb{C}^*)^n$ on projective space has the following defining equation:
\begin{equation*}
f_0=x_0\dotsi x_n.    
\end{equation*}
Let $f$ be a homogeneous polynomial of degree $n+1$ that is a \emph{generic small} perturbation of $f_0$, so that its zero locus $V(f)\subseteq \mathbb{P}^n$ is a smooth Calabi-Yau hypersurface. With such $f$, we associate the hypersurface:
\begin{equation*}
    X_f = V(t^{n+1}-f)\subseteq \mathbb{P}^{n+1},
\end{equation*}
which is an index $1$ Fano hypersurface of dimension $n$.

Projecting away from the point $[1:0:\dotsi:0]\in\mathbb{P}^{n+1}$, onto the hyperplane $H=\{t=0\}\cong \mathbb{P}^n$, produces a cyclic covering map:
\begin{equation*}
\phi:X_f\rightarrow \mathbb{P}^n,  
\end{equation*}
branched along the Calabi-Yau hypersurface $V(f)\subseteq \mathbb{P}^n$. In the previous section, we have shown that $X_f$ carries a Lagrangian torus fibration away from the ramification locus of $\phi$, see Proposition \ref{partial-fibration} . We now count (pseudo-)holomorphic discs of Maslov index $2$, with boundary on a Lagrangian torus fiber $L$, and passing through a fixed point in $L$. This count does not actually depend on the torus fiber. We therefore choose $L$ to be the monotone torus fiber, which in the context of Proposition \ref{partial-fibration}, arises as the pre-image of the Clifford torus $L_{\cl}\subseteq\mathbb{P}^n$:
\begin{equation*}
    L = \phi^{-1}(L_\cl).
\end{equation*}
 The strategy is to count curves in $\mathbb{P}^n$ with a prescribed tangency to the hypersurface $V(f)$; which is the branch locus of $\phi$. We will need to perturb the polynomial $f$, so let $\mathcal{H}$ be the vector space of homogeneous polynomials of degree $n+1$.
%%%%%%%%%%%%%%%%%%%%%%%%%%%%
\subsection{Discs with tangency condition}
%%%%%%%%%%%%%%%%%%%%%%%%%%%%
We start by fixing a relative homology class $\alpha\in H_2(\mathbb{P}^n,L_{\text{cl}})$. This class is determined by the intersection numbers:
\begin{equation*}
    \alpha_k = \alpha \cdot (x_k=0).
\end{equation*}
These numbers determine the Maslov index of $\alpha$ through the equation:
\begin{equation*}
    \frac{1}{2}\mu(\alpha) = \alpha_0 + \dotsi + \alpha_n.
\end{equation*}
We now recall a description of the space $\matheu{M}(L_{\text{cl}},\alpha)$ of holomorphic discs $v:(D,\partial D)\rightarrow (\mathbb{P}^n,L_{\text{cl}})$ in the class $\alpha$.
\begin{lemma} \label{coordinate-wise-description-of-v}
For each $v\in \matheu{M}(L_{\text{cl}},\alpha)$,
there exist holomorphic maps $v_k:(D,\partial D)\rightarrow (D,\partial D)$ of degree $\alpha_k$ such that:
\begin{equation*}
    v(z) = [v_0(z):v_1(z):\dots:v_n(z)].
\end{equation*}
\end{lemma}
\begin{proof}
The claim on degrees is automatic once we have the required description of $v$ in homogeneous coordinates, refer to Lemma \ref{decomposition-into-mobius-transformations} below. Because $v$ intersects the hyperplane $\{x_0=0\}$ in a finite subset $A_0\subset D$, we can write:
\begin{equation}\label{meromorphic-decomp}
    v(z) = [1:g_1(z):\dots:g_n(z)],
\end{equation}
where $g_k$ are holomorphic functions, possibly with singularities at the points of $A_0$. It suffices to show that these singularities, if they arise, are at worst poles. Let $z_{c}\in A_0$ be a singularity for $g_1$. By definition, this means that $v(z_c)$ belongs to the hyperplane $\{x_0=0\}$. But since the hyperplanes $(x_i=0)$ are linearly independent, one of them shouldn't contain $v(z_c)$. Without loss of generality, assume $v(z_c) \not\in \{x_1=0\}$. Similarly to (\ref{meromorphic-decomp}), we can then use an expression of $v$ in the complement of the hyperplane $\{x_1=0\}$ :
\begin{equation*}
    v(z) = [h_0(z):1:\dots:h_n(z)],
\end{equation*}
where the functions $h_k$ are holomorphic outside of a subset $A_1\subset D$, corresponding to the intersection of $v$ with $\{x_1=0\}$. In particular, $h_0$ is holomorphic near $z_c$ and $g_1 = 1/h_0$. It follows that $g_1$ is a meromorphic function as claimed. The same arguments applies to the remaining $g_2,g_3,\dots,g_n$.
\end{proof}

Recall the results of Cho-Oh in \cite{Cho-Oh-toric-regularity}, showing that the moduli space $\matheu{M}(L_{\text{cl}},\alpha)$ (for the integrable complex structure of $\mathbb{P}^n$) is Fredholm regular. Its dimension is computed using the Riemann-Roch formula:
\begin{equation*}
    \dim \matheu{M}(L_{\text{cl}},\alpha) = n + \mu(\alpha).
\end{equation*}
It can also be verified in this case using Lemmas \ref{coordinate-wise-description-of-v} and \ref{decomposition-into-mobius-transformations}. We will always assume $\alpha_k\geq 0$ for $k=0,1,\dots,n$. Otherwise, the corresponding moduli space is empty for the integrable complex structure.

Let us now introduce the relevant tangency moduli space. For each homogeneous polynomial of $f\in\mathcal{H}$ of degree $n+1$, we define:
\begin{equation}\label{tangency-condition}
    \tau^f_{\alpha} = \{ (v,z_0)\in \matheu{M}(L_{\text{cl}},\alpha)\times D\ |\ \ j_{n,z_0}(f\circ v) = 0\},
\end{equation}
where:
\begin{equation}\label{point-wise-jet-map}
    j_{n,z_0}(h) = (h(z_0),h'(z_0),\dots,h^{(n)}(z_0)).
\end{equation}
The tangency condition (\ref{tangency-condition}) imposes a minimal Maslov number constraint, when $f$ is near $f_0$. Indeed:
\begin{align*}
 \mu(\alpha) &= 2v\cdot V(f_0)\\
             &= 2v\cdot V(f)\\
             &\geq 2(n+1).
\end{align*}
More importantly, it means that $v$ comes from a holomorphic disc in the branched covering $X_f$, see Lemma \ref{lifting} below.

In our counting problems, we will always assume: 
\begin{equation}\label{minimal-maslov-number}
    \mu(\alpha)=2(n+1).
\end{equation}

We want to count elements of (\ref{tangency-condition}) with $1$ boundary constraint. To that end, we define:
\begin{equation*}
    \hat{\tau}^f_{\alpha,1} = \tau^f_{\alpha}\times \partial D /Aut(D).
\end{equation*}
It comes with a boundary evaluation map
\begin{equation}\label{evaluation map}
    ev: \hat{\tau}^f_{\alpha,1} \rightarrow L_{\text{cl}}.
\end{equation}
Ideally, the space $\hat{\tau}_{\alpha,1}^f$ will be an oriented closed manifold so that one can compute the degree $n_{\alpha}$ of the evaluation map. This is not always strictly true, and the goal of the remainder of this section is to highlight and resolve the difficulties that arise.

\subsubsection{The spherical class}
One of the relative homology classes with Maslov number $2(n+1)$ is actually spherical:
\begin{equation*}
    \alpha_s = (1,\dots,1).
\end{equation*}
The class $\alpha_s$ behaves somewhat differently from all the others and so we treat it separately.
\begin{proposition}\label{spherical-class-moduli-empty}
If the homogenous polynomial $f$ is sufficiently close to $f_0$, the moduli space $\tau^f_{\alpha_s}$ is empty.
\end{proposition}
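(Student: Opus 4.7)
The plan is to handle the central case $f = f_0$ by an explicit algebraic calculation and then use Gromov compactness to propagate the conclusion to a neighborhood of $f_0$.

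For $f = f_0$, Lemma \ref{coordinate wise description of v} rigidifies $\matheu{M}(L_{\cl}, \alpha_s)$: any element takes the form $v = [v_0 : v_1 : \cdots : v_n]$ with each $v_k$ a degree-one holomorphic self-map of the disc, hence a M\"obius transformation with a unique zero $p_k \in D$. Then $f_0 \circ v = v_0 v_1 \cdots v_n$ is a meromorphic function on $D$ whose zeros in $D$ are exactly $\{p_0, \ldots, p_n\}$ counted with multiplicity. The tangency condition $j_{n, z_0}(f_0 \circ v) = 0$ forces all $n+1$ zeros to collide at $z_0$, so $p_0 = \cdots = p_n = z_0$; but then each $v_k(z) = e^{i\theta_k}(z - z_0)/(1 - \bar{z}_0 z)$, and $v$ simplifies to the constant map $z \mapsto [e^{i\theta_0} : \cdots : e^{i\theta_n}]$, which is not in class $\alpha_s$. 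Hence $\tau^{f_0}_{\alpha_s} = \emptyset$.

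For $f$ in a sufficiently small neighborhood of $f_0$, I would argue by contradiction: assume a sequence $f_i \to f_0$ with $(v_i, z_i) \in \tau^{f_i}_{\alpha_s}$. Since $L_{\cl}$ is uniformly bounded away from $V(f_i)$ for $i$ large, $|f_i \circ v_i|$ admits a uniform positive lower bound on $\partial D$, and therefore $z_i$ remains in a compact subset of the open disc. Gromov compactness then extracts a stable limit $v_\infty$ in class $\alpha_s$ with $z_i \to z_\infty$. If the limit has no bubbling, the tangency condition passes to the smooth limit and the previous paragraph immediately yields a contradiction.

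The main obstacle is the bubbled case. One decomposes $\alpha_s$ as a sum of disc-bubble classes $\beta^{(j)}$ and sphere-bubble classes $d_l [\ell]$. The identity $\sum_j \beta^{(j)}_k + \sum_l d_l = 1$ for every coordinate $k$, combined with non-negativity of all these terms, leaves only two scenarios: (a) no sphere bubbles and the disc classes partition the index set $\{0, 1, \ldots, n\}$ into subsets $S_1, \ldots, S_r$; (b) exactly one degree-one sphere bubble, with all disc components constant. In case (a), if the marked point $z_\infty$ lies in the component indexed by $S_{j^\star}$, Lemma \ref{coordinate wise description of v} applied to that component shows $f_0 \circ v_\infty$ has only $|S_{j^\star}|$ zeros in its domain; since bubbling means $r \geq 2$ and hence $|S_{j^\star}| \leq n$, the order-$(n+1)$ tangency cannot be satisfied. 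In case (b), the attaching node sits on $L_{\cl}$ and hence off every coordinate hyperplane, so the sphere, a line in $\mathbb{P}^n$, meets $V(f_0)$ at $n+1$ points that cannot all coincide (since no point of $\mathbb{P}^n$ lies on all $n+1$ coordinate hyperplanes simultaneously), ruling out order-$(n+1)$ tangency on the sphere; and if $z_\infty$ instead lands on a constant disc, its image sits in $L_{\cl}$ where $f_0 \circ v_\infty$ is a nonzero constant. Both sub-cases contradict the tangency assumption, completing the proof.
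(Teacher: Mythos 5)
Your argument is in substance the paper's own: a contradiction via Gromov compactness for a sequence $f_i\to f_0$, with the two decisive exclusions being exactly the ones used there --- the Blaschke-product description from Lemma \ref{coordinate wise description of v}, which shows that a disc component carrying an order-$(n+1)$ tangency to $V(f_0)$ would have to be constant (your first paragraph and your case (a), where the component containing the marked point has at most $\lvert S_{j^\star}\rvert\leq n$ zeros of $f_0\circ v_\infty$), and the linear-algebra fact that a line cannot meet all $n+1$ coordinate hyperplanes at a single point (your case (b)). Your enumeration of bubble configurations is somewhat more granular than the paper's, which instead argues that $2(n+1)$ is the minimal Maslov number compatible with this tangency order, so that all components other than the one carrying the tangency are constant; the content is the same.

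One intermediate inference is not valid as written: a uniform positive lower bound for $\lvert f_i\circ v_i\rvert$ on $\partial D$ does not force the tangency points $z_i$ to stay in a compact subset of the open disc --- a Blaschke factor $(z-a)/(1-\overline{a}z)$ has modulus $1$ on $\partial D$ while its zero $a$ may be arbitrarily close to the boundary. (Note also that for the class $\alpha_s$ no homogeneous coordinate can be normalized to $1$, so $\lvert f_i\circ v_i\rvert$ only makes sense after choosing a representative, e.g.\ the one from Lemma \ref{coordinate wise description of v}; the tangency order itself is representative-independent.) The step is, however, unnecessary: either reparametrize by $\mathrm{Aut}(D)$ so that $z_i=0$, as the paper does, or let $z_i$ converge anywhere in $\overline{D}$ and observe that if the limit marked point lies on the boundary or at a boundary node, its image lies on $L_{\cl}$, where $f_0$ is nonvanishing, so the limiting tangency condition fails --- the same observation you already use for constant components in case (b). With that repair the proof goes through.
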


\begin{proof}
Suppose we have a sequence $f_i\rightarrow f_0$, and elements $(v_i,z_i)\in \tau_{\alpha_s}^{f_i}$. Up to composition with M\"obius transformations, We may assume $z_i=0$.  By Gromov compactness, the sequence $v_i$ sub-converges to a genus $0$ nodal curve with boundary on $L_{\text{cl}}$. This limit must be tangent to the toric boundary $f_0$ to order $n$. Let $v_{\infty}$ be the component of this nodal curve that is tangent to $f_0$. Since $\mu(\alpha_s) = 2(n+1)$ is the minimal Maslov number for this order of tangency, all other components must in fact be constant. Now, the irreducible component $v_{\infty}$ is either a genuine disc with boundary on $L_{\text{cl}}$, or a projective line arising as a spherical bubble in the Gromov limit.

The former case can be ruled out using our description of discs in terms of M\"obius transformations:
\begin{equation*}
    v_{\infty} = [\phi_0:\dots:\phi_n],
\end{equation*}
since $f_0\circ v_{\infty}=\phi_0\dots\phi_n$ would have to be a degree $n+1$ disc endomorphism that vanishes at $0$ to order $n+1$, as implied by the tangency condition. This can only happen if all $\phi_k$ are multiples of $z$, and in this cases $v_{\infty}$ would be constant, which is a contradiction (see Lemma \ref{decomposition-into-mobius-transformations} below as well).

In the later case, $v_{\infty}$ should be a projective line tangent to the toric divisor to order $n$. This means that it is a line that intersects all components of the toric divisor simultaneously. But linear algebra rules this out, because these components are all linearly independent hyperplanes.
\end{proof}

In the remainder of this section, we present a systematic method to compute $n_{\alpha}$ for all non-spherical classes. From now on, $\alpha\neq \alpha_s$ is a relative homology class for the pair $(\mathbb{P}^n,L_\cl)$, whose Maslov index equals $2(n+1)$.

\subsection{Compactness and counting}
Let $\mathscr{E}_d(D)$ be the space of degree $d$ maps $(D,\partial D)\rightarrow (D,\partial D)$. Note that we are referring here to the topological degree of $v$, which can be computed for example from the pullback:
\begin{equation*}
v^*:H^1(\partial D,\mathbb{Z})\rightarrow H^1(\partial D,\mathbb{Z}),  
\end{equation*}
or equivalently using the integral formula:
\begin{equation*}
    \deg(v) = \int_{\partial D} v^*(d\theta).
\end{equation*}

\begin{lemma}\label{decomposition-into-mobius-transformations}
Any element $v\in \mathscr{E}_{d}(D)$ is a product of $d$ M\"obius transformations:
\begin{equation*}
    v(z) = \xi\prod_{k=1}^d\left( \frac{z-a_k}{1-\overline{a_k}z}\right),
\end{equation*}
where  $\xi$ is a unitary complex number and $a_k \in \text{int}(D)$, for $k=1,\dots,d$. The complex numbers $(a_k)$ will often be referred to as M\"obius centers.
\end{lemma}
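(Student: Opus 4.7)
The plan is to identify $v$ as a finite Blaschke product via the classical maximum modulus argument. First I would use the argument principle to show that $v$ has exactly $d$ zeros in the interior of $D$, counted with multiplicity. Since $v(\partial D)\subseteq \partial D\subset \mathbb{C}^*$, the number of zeros of $v$ inside $D$ equals the winding number of $v|_{\partial D}$ around $0$, which is precisely the topological degree $d$. Let $a_1,\dots,a_d\in\text{int}(D)$ denote these zeros.

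Next I would introduce the candidate Blaschke product
\begin{equation*}
    B(z)=\prod_{k=1}^d\frac{z-a_k}{1-\overline{a_k}z},
\end{equation*}
and verify by direct computation that each Mobius factor $\frac{z-a_k}{1-\overline{a_k}z}$ has modulus $1$ whenever $|z|=1$ (multiply numerator and denominator by $\overline{z}$ on the unit circle). Consequently $|B(z)|=1$ on $\partial D$, and $B$ has exactly the same zeros as $v$ (with the same multiplicities) in $\text{int}(D)$.

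Now consider the ratio $g=v/B$. By construction $g$ extends to a holomorphic function on all of $D$ (the zeros of $B$ cancel those of $v$) with no zeros (since the zeros of $v$ are exhausted). On $\partial D$ we have $|g|=|v|/|B|=1$. Applying the maximum modulus principle to $g$ and to $1/g$ (which is also holomorphic on $D$ since $g$ has no zeros) gives $|g|\equiv 1$ on $D$. A holomorphic function of constant modulus on a connected open set must be constant, so $g\equiv \xi$ for some $\xi\in\partial D$, giving the desired factorization.

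There is essentially no obstacle here beyond assembling the classical ingredients; the one point that requires a moment's care is justifying the argument principle in the boundary case. Since $v$ is the restriction of a holomorphic map defined on a neighborhood of $\overline{D}$ (being a disc endomorphism in $\matheu{M}(L_{\text{cl}},\alpha)$-type contexts, continuity up to the boundary is automatic), and since $v$ does not vanish on $\partial D$, the integral $\frac{1}{2\pi i}\int_{\partial D}\frac{v'}{v}\,dz$ is well-defined and computes both the zero count and the boundary degree $\int_{\partial D}v^*(d\theta)/(2\pi)$, thereby closing the loop.
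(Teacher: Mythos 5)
Your proof is correct, but it follows a different route from the paper. You run the classical Blaschke-product argument in one shot: the argument principle gives exactly $d$ interior zeros $a_1,\dots,a_d$, you form the candidate product $B$, and then the maximum modulus principle applied to both $g=v/B$ and $1/g$ forces $g$ to be a unimodular constant. The paper instead argues by induction on the degree: it first shows (argument principle plus the open mapping theorem) that a degree-$0$ disc endomorphism is constant, then for $d\geq 1$ it finds a single zero, precomposes with the Möbius transformation sending that zero to $0$, writes the result as $z\,h(z)$, and checks via the maximum principle that $h$ is again a disc endomorphism of degree $d-1$, peeling off one Möbius factor per step. Your version is slightly more economical and avoids having to verify that the quotient at each stage is again a boundary-preserving disc endomorphism, at the small cost of handling all zeros and their multiplicities simultaneously; the paper's inductive version produces the Möbius centers one at a time, which matches how they are subsequently used as coordinates on $\mathscr{E}_d(D)$, but either argument delivers exactly the statement needed. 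Your closing remark about justifying the argument principle when $v$ is only assumed holomorphic up to the boundary is the right technical point to flag; even without assuming analyticity on a neighborhood of $\overline{D}$, one can compute the winding number on a circle of radius $1-\epsilon$ (where $v$ is nonvanishing, since $\lvert v\rvert$ is close to $1$ near $\partial D$) and invoke homotopy invariance, so no gap arises there.
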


\begin{proof}
We can prove this result by induction on $d$. Because $v$ is holomorphic, the topological degree formula above simplifies to:
\begin{equation*}
    \deg(v) = \frac{1}{2i\pi} \int_{\partial D}\frac{v'(z)}{v(z)}dz.
\end{equation*}
When $\deg(v)=0$, the argument principle then implies that $v(z)=0$ has no solutions. We claim now that $v$ must be constant. Indeed, if it weren't, the open mapping theorem would imply that $v(D)$ is an open subset of $D$. But $D$ is compact, so $v(D)$ should also be closed and so $v(D)=D$; that's a contradiction.

The induction step goes as follows: Given $v$ of degree $d\geq 1$, the argument principle implies that there exists $a\in D$ such that $v(a)=0$. We now pre-compose $v$ with the inverse $\phi^{-1}$ of the M\"obius transformation:
\begin{equation*}
    \phi(z) = \frac{z-a}{1-\overline{a}z}.
\end{equation*}
The result is a disc endomorphism $g = v\circ \phi^{-1}$ with the property that $g(0)=0$. Therefore, there exists a holomorphic function $h:D\rightarrow \mathbb{C}$ such that $g(z)=zh(z)$.
Note $h$ still restricts to a map $h:\partial D\rightarrow \partial D$. Since $D$ is compact, $h(D)$ is compact, and by the maximum principle we have $\partial h(D)\subseteq h(\partial D)$. It follows that $h$ is again a disc endomorphism whose degree is $d-1$.
\end{proof}

One can extract a set of global coordinates on the space $\mathscr{E}_d(D)$ from the previous Lemma: The set of elementary symmetric polynomials on the \emph{M\"obius centers} $(a_k)$, together with the angular coordinate $\xi$. This makes it easier to study the tangency equation in (\ref{tangency-condition}).

For example, one can show through direct computation that $\tau^{f}_{\alpha}$ (generally speaking) is not necessarily regular. Not even if we allow perturbations of $f\in\mathcal{H}$ near $f_0$. In other words, $0$ is not a regular value of the jet map:
\begin{align*}
    \matheu{M}(L_\cl,\alpha)\times D\times \mathcal{H} &\rightarrow \mathbb{C}^{n+1}\\
    (v,z_0,f)&\mapsto j_{n,z_0}(f\circ v).
\end{align*}

Nonetheless, it is still possible to calculate $n_{\alpha}$, if we interpret it to be the degree of the map:
\begin{align}\label{jet-map-with-perturbation}
    \Phi: \matheu{M}(L_\cl,\alpha)\times \mathcal{H}&\rightarrow \mathbb{C}^{n+1}\times \mathcal{H}\times L\\
    (v,f)&\mapsto \left(j_{n,0}(f\circ v), f, v(1)\right).\notag
\end{align}
Indeed, if one fixes a point $p\in L$, then the pre-image $\Phi^{-1}(0,f,p)$ counts holomorphic discs $v:(D,\partial D)\rightarrow (\mathbb{P}^n,L_\cl)$ in the homology class $\alpha$, that are tangent to $V(f)\subseteq\mathbb{P}^n$ to order $n$ at $z=0$, and such that $v(1)=p$. This fiber is essentially the same as $ev^{-1}(p)$ from (\ref{evaluation map}); the only difference is that we are taking a slice of the action of the automorphism group $\text{Aut}(D)$ by choosing $z=0$ to be the tangency point with $V(f)$, and $z=1$ to be the boundary marked point.

\begin{remark}\label{jet map ambiguity}
There is an ambiguity in the definition of the jet map $j_{n,0}$ from (\ref{jet-map-with-perturbation}): it depends on the choice of a representation of the holomorphic disc $v$ in homogeneous coordinates. However, when a class $\alpha\neq \alpha_s$ satisfying (\ref{minimal-maslov-number}) is fixed, there is a systematic way to produce such representations across the moduli space $\matheu{M}(L_\cl,\alpha)$. The reason is that for some $0\leq i\leq n$, we have vanishing of the intersection number:
\begin{equation*}
\alpha_i=(x_i=0)\cdot v = 0.    
\end{equation*}
Therefore, all holomorphic discs in the moduli space $\matheu{M}(L_\cl,\alpha)$ actually land in the open set $\mathbb{P}^n\backslash \{x_i=0\} = \{x_i=1\}$.
\end{remark}

If we want to ensure that the map in (\ref{jet-map-with-perturbation}) has a well defined degree, we need the following compactness result.
\begin{lemma}\label{properness}
There is an open neighborhood $U$ of $\{0\}\times\{f_0\}\times L$ such that the restriction of $\Phi$ to $U$ is a proper map of smooth manifolds.
\end{lemma}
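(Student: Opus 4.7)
The plan is to combine Gromov compactness with a Maslov-number budget to rule out all non-trivial degenerations on a sufficiently small neighborhood $U$ of $\{0\}\times\{f_0\}\times L$ in the target. Take a sequence $(v_i,f_i)\in \matheu{M}(L_\cl,\alpha)\times \mathcal{H}$ with $\Phi(v_i,f_i)\to (0,f_\infty,x_\infty)\in U$; we must extract a convergent subsequence in the source. Convergence on the $\mathcal{H}$-factor is automatic, so the task reduces to a Gromov compactness statement for the parametrized discs $v_i$, together with the verification that the limit is still a smooth disc in $\matheu{M}(L_\cl,\alpha)$.

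After passing to a subsequence, Gromov compactness produces a stable nodal map $v_\infty$ of class $\alpha$ with boundary on $L_\cl$, whose component carrying the marked point $z=0$ is tangent to $V(f_\infty)$ to order $n$ there. I would then enforce the Maslov budget: any non-constant disc component has Maslov at least $2$, since $L_\cl$ has minimal Maslov $2$ in $\mathbb{P}^n$, and any non-constant sphere component has Maslov at least $2(n+1)$ since $\mathbb{P}^n$ has minimal Chern number $n+1$. Homologically $v\cdot V(f_\infty)=v\cdot V(f_0)$ for $f_\infty$ close to $f_0$, so the order $n$ tangency condition at $z=0$ forces the component(s) incident to $z=0$ to contribute intersection multiplicity at least $n+1$ with $V(f_0)$, and hence Maslov at least $2(n+1)$. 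Because the total Maslov of $v_\infty$ is exactly $2(n+1)$, no further non-constant components are permitted.

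The only surviving alternative is that the tangency is carried by a sphere bubble of Maslov exactly $2(n+1)$ attached to a constant principal disc. Such a sphere would be a projective line tangent to order $n$ to $V(f_\infty)$, and in the limit $f_\infty=f_0$ it would have to meet every component of the toric boundary simultaneously at one point, which the linear-independence argument in the proof of Proposition \ref{spherical-class-moduli-empty} rules out. This rigidity is open in $f$, so by shrinking the $\mathcal{H}$-factor of $U$ the scenario is excluded uniformly for every $f_\infty$ close to $f_0$. Hence $v_\infty$ lies in $\matheu{M}(L_\cl,\alpha)$ and $(v_i,f_i)$ converges in the source. The main subtlety I would check carefully is the $\text{Aut}(D)$-bookkeeping: $\matheu{M}(L_\cl,\alpha)$ is a parametrized moduli space, so the reparametrization freedom is not quotiented out; however, pinning down both the tangency point $z=0$ and the boundary value $v(1)$ plays the role of a slice for this action, so neither marked point can slide onto a bubble in the limit without producing a disc or sphere bubble already excluded by the Maslov count above.
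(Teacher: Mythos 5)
Your argument is correct in substance, but it is the route the paper only sketches in the remark following its proof (Gromov compactness plus tracking the tangency point), not the proof the paper actually writes. The paper's proof avoids stable maps entirely: via Lemma \ref{decomposition-into-mobius-transformations} it coordinatizes $\matheu{M}(L_{\cl},\alpha)$ by products of Blaschke factors and compactifies by letting the M\"obius centers reach $\partial D$; a divergent sequence then converges, uniformly with all derivatives near the interior point $z=0$, to an \emph{honest} disc of strictly smaller Maslov number, the jet equation passes to the limit with no further discussion, and a Blaschke product of degree at most $n$ (or a unimodular constant) cannot vanish to order $n+1$ at $0$. That explicit compactification buys exactly the two things your version has to work for: the limit is never nodal, so there is no question of the tangency order being redistributed onto ghost or bubble components, and the $\text{Aut}(D)$ bookkeeping is automatic because the Blaschke data are global coordinates on the parametrized moduli space. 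In your version, the assertion that the order-$n$ tangency forces the components incident to $z=0$ to carry intersection multiplicity at least $n+1$ with $V(f_\infty)$ is precisely where the technical weight sits when the marked point collides with a bubble; it is a standard fact, but you assert rather than prove it, and it is the ingredient the paper's argument is designed to sidestep (your route, in exchange, uses only general machinery and would survive without the explicit Blaschke description). Two smaller points: the sphere-bubble scenario you treat at length cannot occur at all, since Gromov limits preserve the relative class and a line attached to constant discs lies in $\alpha_s\neq\alpha$ (the line-tangency argument is the content of Proposition \ref{spherical-class-moduli-empty}, which concerns the spherical class excluded from this lemma); and if you do run that argument, the openness claim needs the line not to be contained in $V(f)$ — true here because the bubble meets a point of $L_{\cl}$, which is disjoint from $V(f)$ for nearly degenerate $f$, but false as a blanket statement since degree $n+1$ hypersurfaces generally do contain lines.
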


\begin{proof}
Because $L$ is compact, the only potential cause of non-properness of the map $\Phi$ is the non-compact space $\matheu{M}(L_\cl,\alpha)$. We can remedy this by using the previously established relationship between this space and M\"obius transformations. First of all, using Lemma \ref{decomposition-into-mobius-transformations} and the remark thereafter, we can endow $\matheu{M}(L_\cl,\alpha)$ with smooth coordinates using the following parametrization:
\begin{align}\label{from moduli to mobius}
    \mathscr{E}_{\alpha_0,1}(D)\times\mathscr{E}_{\alpha_1}(D)\dots\times\mathscr{E}_{\alpha_n}(D)&\rightarrow \matheu{M}(L_\cl,\alpha)\\
    (v_0,v_1,\dots,v_n)&\mapsto [v_0:v_1:\dots:v_n],\notag
\end{align}
where:
\begin{equation*}
    \mathscr{E}_{\alpha_0,1}(D) = \left\{v\in\mathscr{E}_{\alpha_0}(D) | \ \ v(1)=1   \right\}.
\end{equation*}
This allows us to compactify $\matheu{M}(L_\cl,\alpha)$ by allowing the M\"obius centers $a_k$ from Lemma \ref{decomposition-into-mobius-transformations} to reach the boundary $\partial D$. We will denote the resulting compactification by $\overline{\matheu{M}}(L_\cl,\alpha)$. Note that discs in the boundary have strictly smaller Maslov numbers.

With this set-up in mind, we can prove the Lemma by way of contradiction. If it weren't true, there would exist an unbounded sequence $v_i\in \matheu{M}(L_\cl,\alpha)$ and $f_i\in \mathcal{H}$ such that:
\begin{equation*}
    f_i\rightarrow f_0 \ \ \text{and}\ \ j_{n,0}(f_i\circ v_i)=0.
\end{equation*}
After possibly passing to a sub-sequence, the maps $v_i$ will converge to an element $v_{\infty}$ of the boundary of $\overline{\matheu{M}}(L_\cl,\alpha)$, and we would still have the tangency equation:
\begin{equation*}
    j_{n,0}(f_0\circ v_{\infty})=0.
\end{equation*}
But since $\mu(v_{\infty})<2(n+1)$, the disc map $f_0\circ v_{\infty} : (D,\partial D) \rightarrow (D,\partial D)$ is non-constant and has degree at most $n$, and as such, it cannot vanish at $0$ to order $n$.
\end{proof}

\begin{remark}
This proof can also be rephrased using Gromov compactness, and then tracking the tangency point in the Gromov limit of the sequence $v_i$, in a similar spirit to the proof of Proposition \ref{spherical-class-moduli-empty}.
\end{remark}

For the purpose of studying the degree of $\Phi$, we recall some useful computational tools from differential topology.
\begin{lemma}\label{degree from submanifold}
Let $f:X\rightarrow Y$ be a proper smooth map between smooth oriented manifolds of the same dimension. Next, let $Z\subseteq Y$ be a smooth oriented submanifold that is transverse to $f$. Then $f^{-1}(Z)$ is a smooth oriented manifold and the degree of $f$ agrees with that of its restriction $f\big|_Z:f^{-1}(Z)\rightarrow Z$.
\end{lemma}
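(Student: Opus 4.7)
The plan is to exploit the fact that $Z$ being transverse to $f$ makes $f^{-1}(Z)$ a smooth oriented submanifold of $X$ of dimension $\dim Z$ (using $\dim X=\dim Y$). The induced orientation is the standard one coming from transverse pullback: at a point $x\in f^{-1}(Z)$, split $T_{f(x)}Y = T_{f(x)}Z \oplus N_{f(x)}$ using a chosen complement, so that the orientation on $Y$ factors as orientation on $Z$ wedge orientation on $N$; then pull $N$ back by $df_x$ (it maps isomorphically to $N_{f(x)}$ modulo $T_xf^{-1}(Z)$) and use this to split $T_xX$ and orient $T_xf^{-1}(Z)$.

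Next, I would check that $f\big|_{f^{-1}(Z)}:f^{-1}(Z)\rightarrow Z$ is proper: the preimage of a compact set $K\subseteq Z$ is $f^{-1}(K)\cap f^{-1}(Z)=f^{-1}(K)$, which is compact since $f$ is proper. With properness in hand, both degrees are well-defined. I would then invoke Sard's theorem to pick a point $z\in Z$ that is simultaneously a regular value of $f$ and of $f\big|_{f^{-1}(Z)}$ (the intersection of two full-measure subsets of $Z$ is still dense).

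For such a $z$, the fiber $f^{-1}(z)$ is a finite set, and because $z\in Z$ one has $f^{-1}(z)\subseteq f^{-1}(Z)$, so the same set of points contributes to both degree counts. It then remains only to match signs: the claim is that for $x\in f^{-1}(z)$, the sign of $df_x:T_xX\to T_{f(x)}Y$ (with respect to the orientations on $X$ and $Y$) equals the sign of $d(f\big|_{f^{-1}(Z)})_x:T_xf^{-1}(Z)\to T_zZ$ (with respect to the induced orientations). This is where the orientation convention above is tailored exactly so that the map on the normal bundles is by construction orientation-preserving, so the two signs must agree.

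The only substantive issue is the orientation bookkeeping in the last paragraph; everything else is essentially the definition of degree together with Sard. I would therefore concentrate the write-up on fixing the splitting $TY\big|_Z = TZ\oplus N$, transporting it via $df$ to a splitting $TX\big|_{f^{-1}(Z)}=Tf^{-1}(Z)\oplus df^{-1}(N)$, and verifying that compatibility of the two splittings with the ambient orientations forces sign agreement at each preimage. Once that is done, summing the signs gives $\deg(f)=\deg(f\big|_{f^{-1}(Z)})$, as claimed.
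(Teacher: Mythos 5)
Your overall structure (properness of the restriction, equality of the fibers over a point of $Z$, and the sign-matching via the transverse-preimage orientation) is fine and is exactly what the paper leaves implicit. But there is one step whose justification does not work as written: you invoke Sard to pick $z\in Z$ that is simultaneously a regular value of $f$ and of $f\big|_{f^{-1}(Z)}$, asserting that the regular values of $f$ meet $Z$ in a full-measure subset of $Z$. Sard applied to $f$ only says the critical values of $f$ have measure zero in $Y$; since $Z$ typically has positive codimension (as it does in the application, $\mathbb{C}^{n+1}\times\{f_0\}\times\{x_0\}$ inside $\mathbb{C}^{n+1}\times\mathcal{H}\times L$), a measure-zero subset of $Y$ could a priori contain all of $Z$, so this gives no control on regular values of $f$ inside $Z$.

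The correct (and simpler) fix is precisely the paper's one-line observation: transversality makes the second regularity automatic. If $z\in Z$ is a regular value of the restriction $f\big|_{f^{-1}(Z)}$, then for every $x\in f^{-1}(z)$ (note $f^{-1}(z)\subseteq f^{-1}(Z)$ since $z\in Z$) the image $df_x\bigl(T_xf^{-1}(Z)\bigr)$ surjects onto $T_zZ$, while transversality gives $\operatorname{im}df_x+T_zZ=T_zY$; together these force $df_x$ to be surjective, so $z$ is a regular value of $f$ as well. Thus a single application of Sard, to the restriction, suffices, and your "simultaneously regular" point exists for free. With that substitution your argument goes through; incidentally, this observation also shows after the fact that the regular values of $f$ do fill out a full-measure subset of $Z$, but the implication runs through transversality, not through Sard for $f$ alone.
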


\begin{proof}
 Just observe that the transversality condition ensures that when $z\in Z$ is a regular value of $f\big|_Z$, it is also a regular value of $f$.
\end{proof}

\begin{lemma}\label{degree through cobordism}
Suppose that $\mathfrak{X}$ is an oriented cobordism between $X_0$ and $X_1$, and let $F:\mathfrak{X}\rightarrow Y$ be a proper smooth map to an oriented smooth manifold $Y$. Then the degrees of the restrictions of $F$ to either of its boundary components $X_i$ are the same.
\end{lemma}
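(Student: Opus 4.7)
The plan is the standard cobordism-invariance-of-degree argument from differential topology. First I would note that since $\dim \mathfrak{X}=\dim X_i+1=\dim Y+1$, a generic regular value of $F$ will have a one-dimensional preimage, and the boundary of that preimage will consist of the regular preimages on $X_0$ and $X_1$.

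More precisely, by Sard's theorem applied to $F$, to $F|_{X_0}$, and to $F|_{X_1}$, the set of common regular values has full measure, so I can pick $y\in Y$ regular for all three maps. Since $F$ is proper and $y$ is regular, $M:=F^{-1}(y)$ is a compact one-dimensional submanifold of $\mathfrak{X}$, with
\begin{equation*}
\partial M = M\cap \partial \mathfrak{X} = (F|_{X_0})^{-1}(y)\sqcup (F|_{X_1})^{-1}(y),
\end{equation*}
and the orientation on $Y$ combined with the orientation on $\mathfrak{X}$ induces an orientation on $M$, hence signs on the boundary points.

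Next I would invoke the classification of compact oriented one-manifolds: $M$ is a finite disjoint union of circles and closed intervals, and for each interval the two endpoints carry opposite signs. Consequently the signed count of $\partial M$ is zero. With the standard convention $\partial \mathfrak{X}=X_1-X_0$, this signed count equals $\deg(F|_{X_1}) - \deg(F|_{X_0})$ (comparing the sign of each preimage point as a regular value of $F|_{X_i}$ to the induced boundary orientation from $M$, using transversality of $X_i$ to $M$ inside $\mathfrak{X}$). Hence the two degrees agree.

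The only step that needs a bit of care is matching the induced boundary orientation on $M$ with the sign that defines the topological degree of $F|_{X_i}$; this is a routine determinant computation (outward-normal-first convention on $\partial \mathfrak{X}$), and is exactly the same verification that powers the classical proof that the degree is a homotopy invariant (which is itself a special case of this lemma applied to the cylinder cobordism). I do not expect any genuine obstacle beyond this bookkeeping.
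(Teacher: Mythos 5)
Your argument is correct and is precisely the standard one: the paper simply outsources this lemma to Milnor's \emph{Topology from the Differentiable Viewpoint} (Lemma 1, Chapter 5), whose proof is exactly your common-regular-value, compact oriented one-manifold, boundary-points-cancel argument. No gap here; the only care needed is the orientation bookkeeping you already flag.
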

\begin{proof}
Refer to Lemma 1 in chapter 5 of Milnor's book \cite{Milnor-diff-top}.
\end{proof}

We can now compute the desired degree.
\begin{lemma}\label{degree calculated}
If the relative homology class $\alpha$ has Maslov number $2(n+1)$, is different from $\alpha_s$, and has the component-wise decomposition $\alpha=(\alpha_0,\dots,\alpha_n)$, then:
\begin{equation*}
    \deg(\Phi) = \frac{(n+1)!}{\alpha_0!\dots\alpha_n!}.
\end{equation*}
\end{lemma}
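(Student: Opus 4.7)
My plan is to compute $\deg(\Phi)$ by reducing to a finite-fiber count via Lemma \ref{degree from submanifold}, and then evaluating that count by a Mobius-coordinate analysis near $f=f_0$.

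First I would apply Lemma \ref{degree from submanifold} to the submanifold $Z=\{0\}\times\mathcal{H}\times L\subseteq \mathbb{C}^{n+1}\times\mathcal{H}\times L$. Since the condition $v(1)\in L_\cl$ is automatic, the preimage is $\Phi^{-1}(Z)=\{(v,f):j_{n,0}(f\circ v)=0\}$, and the restriction $(v,f)\mapsto (f,v(1))$ has the same degree as $\Phi$. On the neighborhood where $\Phi$ is proper (Lemma \ref{properness}) I can therefore evaluate $\deg(\Phi)$ as the signed count of $v\in\matheu{M}(L_\cl,\alpha)$ with $v(1)=x_0$ and $j_{n,0}(f\circ v)=0$ for a regular value $(f,x_0)$ near $(f_0,x_0)$.

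Next I would set up Mobius coordinates. Since $\alpha\neq\alpha_s$ I may assume $\alpha_n=0$ by symmetry; then $v_n\equiv\xi_n\in\partial D$. By Lemma \ref{decomposition-into-mobius-transformations}, each $v_k$ with $k<n$ factors as a product of $\alpha_k$ Mobius transformations times a unit constant $\xi_k\in\partial D$. The normalization $v_0(1)=1$ together with $v(1)=x_0$ pins down all the $\xi_k$, leaving the $n+1$ complex Mobius centers $\{a_{k,j}\}$ as the remaining parameters, matching the $n+1$ complex tangency equations. At $f=f_0$, the product $f_0\circ v$ is, up to a unit, the product of all $n+1$ factors $\frac{z-a_{k,j}}{1-\bar{a}_{k,j}z}$; a zero of total order $n+1$ at $z=0$ forces every $a_{k,j}=0$, yielding a unique singular solution $v^*$. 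Taylor expanding at $a=0$ (using that the denominators equal $1$ to leading order) the jet map acquires the leading form $\Psi(a)=(E^{(1)}(a),\dots,E^{(n+1)}(a))$, where $E^{(r)}$ is the $r$-th elementary symmetric polynomial in the entire collection $\{a_{k,j}\}$.

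The holomorphic map $\Psi$ factors as
\[
\prod_k \text{Sym}^{\alpha_k}(\mathbb{C})\;\longrightarrow\;\text{Sym}^{n+1}(\mathbb{C})\;\xrightarrow{\ \sim\ }\;\mathbb{C}^{n+1},
\]
where the first arrow forgets the color partition and the second is the Newton isomorphism. The generic fiber of the merge map counts distributions of $n+1$ distinct centers among the $n+1$ colors with prescribed cardinalities $\alpha_k$, giving the multinomial coefficient $\binom{n+1}{\alpha_0,\dots,\alpha_n}$; Newton is a bijection; so $\Psi$ has local degree $\frac{(n+1)!}{\alpha_0!\cdots\alpha_n!}$. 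Finally I would perturb to $f=f_0+\epsilon h$ with $h$ generic and $\epsilon>0$ small: the tangency equation becomes a holomorphic perturbation of $\Psi(a)=0$, so by the holomorphic implicit function theorem (combined with Lemma \ref{properness} to prevent solutions from escaping) it splits into exactly $\frac{(n+1)!}{\prod\alpha_k!}$ transverse simple roots, each contributing $+1$ to the signed count because all maps in play are holomorphic. Lemma \ref{degree through cobordism} propagates this count to any regular value, completing the computation. The main obstacle I expect is making rigorous that the leading order of the tangency map really is the Newton-type $\Psi$, and not some more degenerate truncation: this requires a careful Taylor expansion verifying that the dominant $O(|a|)$ contribution comes from the numerators of the Mobius factors while denominators and cross-terms are $O(|a|^2)$, hence only perturb the roots of $\Psi$ without changing their count.
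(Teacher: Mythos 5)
Your core computation is the same as the paper's: parametrize $\matheu{M}_{x_0}(L_\cl,\alpha)$ by Blaschke products via Lemma \ref{decomposition-into-mobius-transformations}, observe that at $f_0$ the only tangency solution has all Mobius centers at $0$, and obtain the multinomial $\tfrac{(n+1)!}{\alpha_0!\cdots\alpha_n!}$ as the degree of the ``merge'' map from colored center configurations to the full degree-$(n+1)$ Blaschke product, with the jet/elementary-symmetric-function identification contributing degree $1$. Where you diverge is the reduction step, and that is where there is a genuine gap. You apply Lemma \ref{degree from submanifold} to the slice $Z=\{0\}\times\mathcal{H}\times L$; transversality of $\Phi$ to this $Z$ is equivalent to asking that the projection of $\operatorname{im}d\Phi$ to the $\mathbb{C}^{n+1}$ factor be surjective along $\Phi^{-1}(Z)$, i.e.\ that $0$ be a regular value of the tangency map $(v,f)\mapsto j_{n,0}(f\circ v)$ jointly in $(v,f)$. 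This is exactly the regularity that the paper flags as failing (``not even if we allow perturbations of $f\in\mathcal{H}$ near $f_0$''), so the hypothesis of Lemma \ref{degree from submanifold} is not available for your choice of $Z$, and without it $\Phi^{-1}(Z)$ need not even be a manifold. Your fallback, perturbing to $f=f_0+\epsilon h$ and invoking a ``holomorphic implicit function theorem'' to get $\tfrac{(n+1)!}{\prod\alpha_k!}$ transverse simple roots all counting $+1$, does not close this: the tangency map in Mobius-center coordinates is not holomorphic (the Blaschke denominators and the normalization constants $\xi_k$ depend on $\bar a$), the model $\Psi=(E^{(1)},\dots,E^{(n+1)})$ is weighted-homogeneous with components of degrees $1,\dots,n+1$, so the comparison must be a weighted one rather than ``$O(|a|)$ versus $O(|a|^2)$''; and, most seriously, the $f$-perturbation does not enter as a generic right-hand side. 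For instance when $\alpha=(n+1,0,\dots,0)$ the central solution is $v^*=[\xi_0 z^{n+1}:\xi_1:\dots:\xi_n]$ and $h\mapsto j_{n,0}(h\circ v^*)$ has rank $1$ (only exponents divisible by $n+1$ occur), so a generic $h$ cannot be used to split the degenerate solution into transverse positive roots in the way you assert.

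The paper avoids all of this by slicing the other way: it restricts $\Phi$ to $\mathbb{C}^{n+1}\times\{f_0\}\times\{x_0\}$, for which transversality only requires that the boundary evaluation $v\mapsto v(1)$ be a submersion, and then computes the degree of the restricted map $\hat\Phi=j_{n,0}\circ\pi_\alpha$ by multiplicativity: $\deg(\pi_\alpha)$ is the multinomial, and $\deg(j_{n,0})=1$ because $j_{n,0}^{-1}(0)=\{z^{n+1}\}$ and $0$ is a regular value of $j_{n,0}$ on the single Blaschke space (a direct computation in elementary symmetric coordinates, with complex-linear differential hence sign $+1$). Note that this route never needs $0$ to be a regular value of the composite $\hat\Phi$ --- and indeed it is not, since $z^{n+1}$ is a totally branched point of $\pi_\alpha$ --- which is precisely why the enumerative interpretation you are after (actual tangency solutions for a fixed generic $f$) is delicate, while the degree-theoretic formulation is not. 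If you want to salvage your route, you would have to prove by hand, via a weighted local-degree argument at $a=0$, that the local Brouwer degree of the non-holomorphic tangency map equals that of $\Psi$, and separately justify why counting preimages over a value with jet coordinate pinned to $0$ computes $\deg(\Phi)$; the paper's choice of slice is exactly the device that makes both issues disappear.
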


\begin{proof}
We start by applying Lemma \ref{degree from submanifold} to restrict $\Phi$ to the submanifold $\mathbb{C}^{n+1}\times \{f_0\}\times \{p\}$, where $p=[1:\dots:1] \in L_\cl$. The required transversality condition follows from the fact that the evaluation map:
\begin{align*}
    \matheu{M}(L_{\cl},\alpha)&\rightarrow L_\cl\\
    v&\rightarrow v(1),
\end{align*}
is a submersion; refer again to Lemmas \ref{decomposition-into-mobius-transformations} and \ref{coordinate-wise-description-of-v}. Therefore, we may compute $\deg(\Phi)$ from the map:
\begin{align*}
   \hat{\Phi}: \matheu{M}_{p}(L_\cl,\alpha) &\rightarrow \mathbb{C}^{n+1}\\
    v &\mapsto j_{n,0}(f_0\circ v),
\end{align*}
where:
\begin{equation*}
     \matheu{M}_{p}(L_\cl,\alpha) = \left\{v\in  \matheu{M}(L_\cl,\alpha) \ \ |\ \ v(1)=p \right\}.
\end{equation*}
Referring back to Lemma \ref{decomposition-into-mobius-transformations} again, notice that we have a product decomposition:
\begin{equation}\label{M_{x_0} components}
     \matheu{M}_{p}(L_\cl,\alpha) = \mathscr{E}_{\alpha_0,1}(D)\times\dots\times \mathscr{E}_{\alpha_n,1}(D),
\end{equation}
using the same parametrization as (\ref{from moduli to mobius}). Moreover, the map $\hat{\Phi}$ is the composition of a product map and a jet map:
\begin{align*}
  \mathscr{E}_{\alpha_0,1}(D)\times\dots\times \mathscr{E}_{\alpha_n,1}(D) &\xrightarrow{\pi_\alpha} \mathscr{E}_{n+1,1} (D) \xrightarrow{j_{n,0}} \mathbb{C}^{n+1}\\
  v=(v_0,\dots,v_n)&\mapsto v_0\cdot v_1\dotsi v_n \mapsto j_{n,0}(\pi_{\alpha}(v)).
\end{align*}
Referring back to Lemma \ref{decomposition-into-mobius-transformations}, we can see that $\deg(\pi_\alpha)$ is the number of ways to partition a set $S$ of $n+1$ M\"obius transformations, into $n+1$-sets $S_i$, such that the size of $S_i$ is $\alpha_i$. This partition number is precisely:
\begin{equation*}
    \deg(\pi_{\alpha}) = \frac{(n+1)!}{\alpha_0!\dots\alpha_n!}.
\end{equation*}
It remains to show that the jet map:
\begin{align*}
    j_{n,0}: \mathscr{E}_{n+1,1}(D) &\rightarrow \mathbb{C}^{n+1}\\
    f&\mapsto j_{n,0}(f).
\end{align*}
has degree $1$. Again going back to (the proof of) Lemma \ref{decomposition-into-mobius-transformations}, we can see that $j_{n,0}^{-1}(0) = \{z^{n+1}\}$. Therefore, we need to show that $0$ is a regular value of this map.

\textit{Claim:} For each $d\geq 1$, $0$ is a regular value of the jet map:
\begin{equation*}
    j_{0,d}:\mathscr{E}_{1,d}(D)\rightarrow \mathbb{C}^d.
\end{equation*}

\textit{Proof of the claim.} This is a direct computation. An element $v\in \mathscr{E}_{1,d}(D)$, according to Lemma \ref{decomposition-into-mobius-transformations}, must have the form:
\begin{equation*}
    v(z) = \prod_{k=1}^d\left(\frac{1-\overline{a}_k}{1-a_k} \right) \prod_{k=1}^d\left(\frac{z-a_k}{1-\overline{a}_kz}\right).
\end{equation*}
As we have alluded to before, the elementary symmetric polynomials on $(a_1,\dots,a_k)$ provide us with a complete set of coordinates on the space $\mathscr{E}_{1,d}(D)$. Let $\lambda_i$ be the elementary symmetric polynomial of degree $i$. Then:
\begin{equation*}
    \prod_{d=1}^d(z-a_k)= \sum_{i=0}^d (-1)^i\lambda_i z^{d-i},
\end{equation*}
where we have set $\lambda_0=1$ for consistency of notation. If we now think of each $v\in\mathscr{E}_{1,d}(D)$ in terms of the coordinate $\lambda=(\lambda_1,\dots,\lambda_d)$ that defines it, we see that the jet map takes the form:
\begin{equation*}
    \lambda \mapsto j_{d,0}\left( \frac{R(\overline{\lambda},1)}{R(\overline{\lambda},z)}\cdot \frac{P(\lambda,z)}{P(\lambda,1)}\right),
\end{equation*}
where:
\begin{equation*}
    P(\lambda,z) = \sum_{i=0}^d (-1)^i\lambda_i z^{d-i} \ \ \text{and}\ \ R(\overline{\lambda},z) = \sum_{i=0}^d (-1)^i\overline{\lambda}_iz^i.
\end{equation*}
Our claim then says that $\lambda=0$ should be a regular point. Since we described the domain with complex coordinates, we find it easier to compute complex derivatives, even though $j_{d,0}$ is not holomorphic.
The derivatives at $\lambda=0$ are:
\begin{align*}
    (dj_{d,0})_{\lambda=0}\left(\partial_{\overline{i}}\right)&=0,\\
    (dj_{d,0})_{\lambda=0}(\partial_i) &= (0,\dots,0,(-1)^i,0,\dots,0),
\end{align*}
where $(-1)^i$ takes the $(d-i)^{\text{th}}$ slot. It follows that $0\in \mathbb{C}^d$ is indeed a regular value of $j_{d,0}$, which proves the claim, and the lemma as a consequence.
\end{proof}

\begin{remark}
One might ask what goes wrong in the proof of the previous lemma when $\alpha=\alpha_s$ is the spherical class that we have treated separately before. The main difference is that we no longer have the parametrization in (\ref{M_{x_0} components}) due to cancellations that occur in homogeneous coordinates when all the M\"obius transformations are equal to each other.
\end{remark}
%%%%%%%%%%%%%%%%%
%%%%%%%%%%%%%%%%%
%%%%%%%%%%%%%%%%%
\subsection{Transversality} Now that we have our numbers $n_{\alpha}$, we need to justify that they in fact correspond to generic counts of Maslov index $2$ discs in the branched covering $(X_f,L)$.

Let us fix a homology class $\beta\in H_2(X_f,L)$ of Maslov index $2$, and consider its corresponding moduli space:
\begin{equation*}
    \matheu{M}(L,\beta) = \left\{u:(D,\partial D)\rightarrow (X_f,L) \ \ |\ \ [u]=\beta \ \text{and}\ \overline{\partial}_J u=0 \right\}.
\end{equation*}
where $J$ is the integrable complex structure of $X_f$. We can then form the moduli space of unparametrized discs with $1$ boundary marked point:
\begin{equation*}
    \hat{\matheu{M}}_1(L,\beta) = \frac{\matheu{M}(L,\beta)\times \partial D}{\text{Aut}(D)}
\end{equation*}
and our goal is to compute the degree $m_{0,\beta}(L)$ of the evaluation map:
\begin{align*}
    ev:\hat{\matheu{M}}_1(L,\beta)  &\rightarrow L\\
    (u,e^{i\theta})&\mapsto u(e^{i\theta}).
\end{align*}
The way it was defined above, the moduli space $\hat{\matheu{M}}_1(L,\beta)$ is not always regular, and we  will need to introduce Hamiltonian perturbations to achieve transversality. This detail will be revisited soon.

Because $\mu(\beta)=2$, each holomorphic disc $u$ will intersect the ramification divisor $\{t=0\}\subseteq X_f$ exactly once. We can use this idea to take a slice of the action of the group $\text{Aut}(D)$ above: Fix the boundary point to be $1$, and the intersection point with $R$ to be $u(0)$. In other words, $m_{0,\beta}(L)$ is also the degree of the map:
\begin{align*}
    ev_1:\hat{\matheu{M}}_0(L,\beta)  &\rightarrow L\\
    u &\mapsto u(1),
\end{align*}
where:
\begin{equation}\label{moduli-space-in-cover}
    \hat{\matheu{M}}_0(L,\beta) =\left\{u\in \matheu{M}(L,\beta) \ | \  t(u(0))=0  \right\}.
\end{equation}
The plan is to compute the previous degree by pushing this whole story down to $\mathbb{P}^n$ using the branch covering map:
\begin{equation*}
    \phi : X_f\rightarrow \mathbb{P}^n.
\end{equation*}
We recall that $f$ was a degree $n+1$ homogeneous polynomial in the coordinates $[x_0:\dots:x_n]$ of projective space; that it was chosen to be sufficiently close to $f_0=x_0\cdot x_1\dotsi x_n$; and that $X_f=V(t^{n+1}-f)\subseteq \mathbb{P}^{n+1}$. The map $\phi$ is "forgetting" the $t$-coordinate. It is branched along $B_f=V(f)\subseteq \mathbb{P}^n$, and ramified along $R_f=\{t=0\} = \phi^{-1}(B_f)\subseteq X_f$. For technical transversality reasons, we will need to work with an \emph{enlarged} ramification locus:
\begin{equation*}
R_+ = \phi^{-1}(B_f\cup V(f_0)). 
\end{equation*}

In order to carry out this plan, we need the moduli space $\hat{\matheu{M}}_0(L,\beta)$ to be regular. This can be achieved by keeping $J$ fixed, and perturbing the $J$-holomorphic equation to:
\begin{equation*}
    (du-Y)^{0,1} = 0,
\end{equation*}
where the perturbation datum $Y \in \Omega^1(D,\Gamma_0(TX_f))$ is a $1$-form on $D$ with values in the space of Hamiltonian vector fields on $X_f$, that have compact support in the complement of $R_+$ in $X_f$. This class of perturbations is large enough to achieve transversality, see for example \cite{PL-theory}, section (9k). We can therefore fix a sufficiently small $Y$ for which the perturbed moduli space:
\begin{align*}
    \hat{\matheu{M}}^Y_0(L,\beta) =\{u:(D, \partial D)\rightarrow  (X_f,L) \ | \  (du-Y)^{0,1} = 0,&\ \\ \text{and}\  t(u(0))=0, \ [u]=\beta &  \},
\end{align*}
is a regular. Next, we pushforward this whole story to $\mathbb{P}^n$. Let $\alpha=\phi_{*}(\beta)$, and we set $Z=\phi_*Y \in \Omega^1(D,\Gamma_0(T\mathbb{P}^n))$ to be the pushforward of the perturbation datum $Y$, where now $\Gamma_0(T\mathbb{P}^n)$ stands for vector fields with compact support in the complement of the branch locus $V(f)\subseteq \mathbb{P}^n$.

Our choice of perturbation data ensures that an element $u\in\hat{\matheu{M}}^Y_0(L,\beta) $ is genuinely holomorphic near the ramification locus, and therefore, its pushforward $v=u\circ\phi$ is also holomorphic near the branch locus $V_f$, Moreover:
\begin{equation}\label{tangency-equation}
    j_{n,0}(f\circ v) = 0.
\end{equation}
This pushforward $u\mapsto v$ is an \emph{unbranched} covering map of degree $n+1$. To see that, we consider the perturbed tangency moduli space:
\begin{align*}
  \hat{\tau}^Z_0(L_{\cl},\alpha) =\{v:(D, \partial D)\rightarrow (\mathbb{P}^n,L_{\cl}) \ | \  (dv-Z)^{0,1} = 0,&\\ \text{and}\ j_{n,0}(f\circ v)=0, \ [v]=\alpha & \}.
\end{align*}

\begin{lemma}\label{lifting}
Every disc map $v\in \hat{\tau}^Z_0(L_{\cl},\alpha)$ has $n+1$ distinct lifts $(u^i)_{0\leq i\leq n}\in \hat{\matheu{M}}^Y_0(L,\phi^{-1}(\alpha))$, where:
\begin{equation*}
  \hat{\matheu{M}}^Y_0(L,\phi^{-1}(\alpha)) = \bigcup_{\beta\in \phi^{-1}(\alpha)} \hat{\matheu{M}}^Y_0(L,\beta) .
\end{equation*}
Furthermore,
\begin{equation}
   \sum_{\beta\in \phi^{-1}(\alpha)} m_{0,\beta}(L) = \deg\left( \hat{\tau}^Z_0(L_{\cl},\alpha) \xrightarrow{ev_1} L_{\text{cl}}\right).
\end{equation}
\end{lemma}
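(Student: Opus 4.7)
The strategy is to construct the $n+1$ lifts explicitly by solving $t^{n+1} = f\circ v$ on the disc, and then to exploit the $\mathbb{Z}/(n+1)$ deck symmetry of $\phi$ to match boundary evaluations. First I combine the minimal Maslov condition $\tfrac{1}{2}\mu(\alpha) = n+1$ with the tangency condition $j_{n,0}(f\circ v)=0$ to conclude that $f\circ v$ has a zero of order exactly $n+1$ at $z=0$ and no other zeros on $\bar D$: the total interior intersection with $V(f)$ equals $n+1$, the tangency already consumes it all at the origin, and the boundary is disjoint from $V(f)$ because $L_\cl$ is. Writing $f\circ v(z) = z^{n+1} h(z)$ with $h$ nowhere-zero on the simply connected disc, I extract exactly $n+1$ holomorphic $(n+1)$-th roots of $h$; for each $\zeta^{n+1}=1$ I set $t_\zeta(z) := \zeta\, z\, h(z)^{1/(n+1)}$ and define the candidate lift $u_\zeta := [v_0:\dots:v_n:t_\zeta]: D \to X_f$. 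These are $n+1$ pairwise distinct holomorphic maps with $\phi\circ u_\zeta = v$ and $t(u_\zeta(0)) = 0$.

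Two compatibility checks remain. The boundary condition $u_\zeta(\partial D)\subseteq L$ holds because $\phi^{-1}(L_\cl)$ is already a single connected torus: each generator of $\pi_1(L_\cl) = \mathbb{Z}^n$ rotates one coordinate by $2\pi$, which winds $f_0$ (and, for $f$ near $f_0$, also $f$) once around the origin, so the deck monodromy acts on the $t$-coordinate by multiplication by $e^{2\pi i/(n+1)}$ and hence generates $\mathbb{Z}/(n+1)$. The perturbed equation $(du_\zeta - Y)^{0,1}=0$ follows from $(dv-Z)^{0,1}=0$ together with the relation $Z=\phi_*Y$, which is literal on the unramified locus where $d\phi$ is invertible; near $z=0$ the perturbation $Y$ vanishes identically (its support is disjoint from $R_+$), so $u_\zeta$ is genuinely $J$-holomorphic there and the root-extraction produces a valid solution across the ramification. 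The same identity, read backwards, shows that every element of $\bigsqcup_{\beta\in\phi^{-1}(\alpha)} \hat{\matheu{M}}^Y_0(L,\beta)$ descends via $\phi$ to some $v\in\hat{\tau}^Z_0(L_\cl,\alpha)$, so the lifting realizes a bijection that is exactly $(n+1)$-to-$1$.

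For the degree equality, evaluate at $z=1$: the lifts satisfy $u_\zeta(1) = [v(1):\zeta\, h(1)^{1/(n+1)}]$, giving $n+1$ distinct points in the fiber $\phi^{-1}(v(1))$, permuted freely by the deck group. Hence, fixing a regular value $x_0\in L$, for each $v\in ev_1^{-1}(\phi(x_0))$ exactly one of the $n+1$ lifts passes through $x_0$. This yields an orientation-preserving bijection
\begin{equation*}
\bigsqcup_{\beta\in\phi^{-1}(\alpha)} ev_1^{-1}(x_0) \ \xrightarrow{\ \sim\ }\ ev_1^{-1}(\phi(x_0)),
\end{equation*}
with orientations matching because $\phi$ is holomorphic; summing over $\beta$ then delivers the claimed identity of degrees. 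The main technical obstacle I anticipate is the $Y$-$Z$ correspondence of the perturbed equations: the relation $Z=\phi_*Y$ only makes literal sense on the unramified locus, and extending the lifting smoothly across $z=0$ rests essentially on both perturbations vanishing near the branch/ramification loci, which is what allows the root-extraction to produce a bona fide element of $\hat{\matheu{M}}^Y_0(L,\beta)$.
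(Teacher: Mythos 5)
Your proposal is correct and takes essentially the same route as the paper: exact order-$(n+1)$ vanishing of $f\circ v$ at $z=0$ with no other zeros, root extraction to produce the $n+1$ distinct lifts, the vanishing of the perturbation data near the branch/ramification loci plus $Z=\phi_*Y$ to see the lifts solve the perturbed equation, and the $(n+1)$-to-$1$ covering relations (which the paper packages as a commutative square of covering spaces, while you phrase it as a fiberwise bijection over a regular value) to get the degree identity. The only cosmetic caveat is that $f\circ v$ is holomorphic only where the perturbation vanishes, so your global $(n+1)$-th root of $h$ is merely continuous away from $z=0$ (the paper instead extracts the root locally near $0$ and extends by path-lifting through the unbranched covering); this is harmless since, as you note, the perturbed equation for the lift is checked separately via the $Y$--$Z$ correspondence.
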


\begin{proof}
By abuse of notation, we will identify $v$ with its component-wise description in homogeneous coordinates in order to study the composition $f\circ v : D\rightarrow \mathbb{C}$, the ambiguity in this choice is the subject of Remark \ref{jet map ambiguity}. In a small disc $\{\abs{z}<r\}$, this function is holomorphic and vanishes to degree $n+1$ at $0$, and hence there exists a function $t:\{ \abs{z}<r\} \rightarrow \mathbb{C}$ such that:
\begin{equation*}
    t(z)^{n+1} = f\circ v(z).
\end{equation*}

This produces a lift $u=[t:v]$ of $v$, but only on the smaller domain $t:\{ \abs{z}<r\}\rightarrow\mathbb{C}$.  Because $f\circ v$ only vanishes at $0$ (otherwise the Maslov number of $\alpha$ would be bigger than $2(n+1)$), the lift $u$ above can be extended to the whole of $D$ using the path lifting property of the unbranched covering map $X_f\backslash R\rightarrow \mathbb{P}^n\backslash V_f$. The number of lifts is $n+1$, because on $\{ \abs{z}<r\}$, the equation $t(z)^{n+1} = f\circ v(z)$ has exactly $n+1$ solutions in $t$. Moreover, the unique continuation principle for solutions of the perturbed $J$-holomorphic equation:
$$(du-Y)^{0,1} = 0,$$
ensures that two solutions $u_1$ and $u_2$ that agree on a non-empty open set, must in fact be identical. Finally, the degree formula follows from the diagram of covering spaces:
\begin{center}
\begin{tikzcd}
\hat{\matheu{M}}^Y_0(L,\phi^{-1}(\alpha))\arrow[d] \arrow[r, "ev_1"] & L \arrow[d]\\
\hat{\tau}^Z_0(L_{\cl},\alpha) \arrow[r,"ev_1"] & L_{\text{cl}}
\end{tikzcd}
\end{center}
because the vertical maps both have degree $n+1$.
\end{proof}

\begin{remark}
When $n>2$, the pushforward map $\phi_*: H_2(X_f,L)\rightarrow H_2(\mathbb{P}^n,L)$ is injective. This is because $H_2(X_f)$ is generated by a hyperplane section, which is fixed by Deck transformations of the branched covering map $\phi:(X_f,L)\rightarrow (\mathbb{P}^n,L)$. In particular, Deck transformations act trivially on $H_2(X_f,L)$. This property fails in dimension $2$. This is not an issue however, because when the Maslov index 2 classes $\beta_1,\beta_2\in H_2(X_f,L)$ are in the same orbit of the $\mathbb{Z}_3$-action, we in fact have $m_{0,\beta_1}(L)=m_{0,\beta_2}(L)$.
\end{remark}

The previous argument omits at least one important technical detail, and that is the regularity of the perturbed tangency space. In order to address this issue, we introduce the deformed moduli space:
\begin{equation*}
    \matheu{M}^Z(L_{cl},\alpha) = \left\{ v:(D, \partial D)\rightarrow (\mathbb{P}^n,L_{\cl}) \ | \ [v]=\alpha,\  (dv-Z)^{0,1} = 0 \right\}.
\end{equation*}
Note that $Z=0$ corresponds to the unperturbed moduli space of holomorphic discs with boundary on the Clifford torus in the class $\alpha$, and with the standard complex structure of projective space. Recall that this moduli space is Fredholm regular, see theorem 6.1 \cite{Cho-Oh-toric-regularity}. Since we are perturbing using a small $Y$ (and hence a small $Z=\phi_*(Y)$), this Fredholm regularity is not lost. On this moduli space, we can define a jet map:
\begin{align}\label{perturbed jet map}
    j^f_{n,0}: \matheu{M}^Z(L_{cl},\alpha) &\rightarrow \mathbb{C}^{n+1}\\
    v &\mapsto j_{n,0}(f\circ v).\notag
\end{align}
Just like in Remark \ref{jet map ambiguity}, there is an ambiguity in defining this map, but it is resolved by the same argument: Indeed, the perturbation datum $Z$ vanishes near the toric divisor $V(f_0)$ in projective space, and that implies that for any $v\in \matheu{M}^Z(L_{cl},\alpha)$, the disc $v$ intersects the hyperplane $(x_i=0)$ finitely many times, and that all the intersection points have positive contributions to the intersection number $v\cdot (x_i=0)$. But again, the Maslov number constraint (\ref{minimal-maslov-number}) (together with $\alpha\neq \alpha_s$) forces one of these intersection numbers to be $0$. In particular, we can fix an $i$ such that $\alpha_i=0$, and then all elements $v\in \matheu{M}^Z(L_{cl},\alpha)$ will have a unique representation in homogeneous coordinates where the $i^{\text{th}}$ coordinate is constantly equal to $1$, and this coordinate representation makes $j^f_{n,0}$ well defined.
\begin{remark}\label{convention coordinate equals 1}
From now on, we assume that we have fixed $i$ such that $\alpha_i=0$, so that all discs in the moduli space $\matheu{M}^Z(L_{cl},\alpha)$ have image in the open set $\{x_i=1\}$, and we think of $f$ as a function on this open set.
\end{remark}
We are now in position to state the main regularity theorem of this section.

\begin{proposition}\label{transversality}
In the jet map (\ref{perturbed jet map}), $0\in \mathbb{C}^{n+1}$ is a regular value.
\end{proposition}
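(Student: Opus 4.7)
The plan is a parametric Sard--Smale argument, treating the perturbation datum $Z$ (equivalently $Y$ on the branched cover) as an auxiliary parameter. Consider the universal jet map
\begin{equation*}
J : \bigsqcup_{Z} \matheu{M}^{Z}(L_{\cl},\alpha)\times\{Z\} \longrightarrow \mathbb{C}^{n+1},\qquad (v,Z)\mapsto j_{n,0}(f\circ v).
\end{equation*}
If $J$ has $0$ as a regular value, Sard--Smale yields a residual set of perturbation data $Z$ for which $0$ is a regular value of the restricted map $j^f_{n,0}$; since $Z$ was introduced by us precisely to achieve such transversality, we may select it inside this residual set. The proposition thus reduces to showing that $J$ is a submersion at every point of $J^{-1}(0)$.

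Fix $(v,Z)\in J^{-1}(0)$. The vanishing of the $0$-jet forces $v(0)\in V(f)$. I would first arrange, by choosing $f$ in an open dense subset of its allowed neighborhood of $f_0$, that $v(0)$ is a smooth point of $V(f)$, that is, $Df(v(0))\neq 0$. While $V(f_0)$ is singular at every unperturbed tangency point $[0{:}\dots{:}1{:}\dots{:}0]$---there $df_0$ vanishes because two or more homogeneous coordinates are zero---a generic perturbation $f=f_0+\epsilon h$ shifts the tangency to a nearby point of $V(f)$ whose differential has leading term $\epsilon\,dh$ at the unperturbed point, which is non-zero for generic $h$. Granted $Df(v(0))\neq 0$, the differential $dJ_{(v,Z)}(\dot v,\dot Z)=j_{n,0}(Df(v)\cdot \dot v)$ is surjective provided the projection $(\dot v,\dot Z)\mapsto \dot v$ achieves every holomorphic Taylor polynomial of degree $\leq n$ at $z=0$. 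Since $Z$ vanishes near $V(f)\cup V(f_0)$, in particular near $v(0)$, the linearized universal equation reduces to $\bar\partial\dot v=0$ locally, so any prescribed holomorphic germ is locally admissible; a standard cutoff-and-correct argument using the Fredholm surjectivity of the universal linearization (as in \cite{PL-theory}, \S(9k)) extends such a germ to a genuine tangent vector $(\dot v,\dot Z)$ of the universal moduli without disturbing its Taylor jet at $0$.

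The submersion is completed by a triangular argument: with $\dot v^{(m)}(0)\in T_{v(0)}\mathbb{P}^n$ freely prescribable for $m=0,\dots,n$, the $m^{\text{th}}$ component of $j_{n,0}(Df(v)\cdot\dot v)$ equals $m!\,Df(v(0))\cdot\dot v^{(m)}(0)$ plus terms depending only on $\dot v^{(k)}(0)$ for $k<m$; since $Df(v(0))\in(T_{v(0)}\mathbb{P}^n)^*$ is non-zero, one solves triangularly for $\dot v^{(0)}(0),\dots,\dot v^{(n)}(0)$ to hit any target in $\mathbb{C}^{n+1}$. The main technical obstacle is the cutoff-and-correct step: the correction produced by the Fredholm right-inverse must be realizable by an \emph{admissible} variation $\dot Z$, that is, one supported away from the branch locus and the toric boundary; this is standard in Lagrangian Floer transversality but requires careful bookkeeping of the supports of the error and of the correction.
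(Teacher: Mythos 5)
There is a genuine gap, and it sits exactly where you flagged it. Your parametric Sard--Smale scheme needs the universal linearization to reach arbitrary $n$-jets at $z=0$, but the admissible variations $\dot Z$ are, by construction, supported away from $V(f)\cup V(f_0)$, hence vanish identically in a neighborhood of $v(0)\in V(f)$ --- precisely where the jet is evaluated. So all the freedom must come from the kernel of the fixed-$Z$ operator $D_v$, and your cutoff-and-correct step is circular: after cutting off a holomorphic germ with the prescribed jet, the error is killed by a right inverse of $D_v$ (which is already surjective, since $\matheu{M}^Z(L_{\cl},\alpha)$ is regular), but the correction has an uncontrolled $n$-jet at $0$; choosing a correction with vanishing $n$-jet at $0$ is equivalent to the surjectivity of $(dj^f_{n,0})_v$ on $\ker D_v$, i.e.\ to the very statement being proven. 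Note also that the paper explicitly observes that naive genericity (there, over $f\in\mathcal{H}$) does \emph{not} make the tangency problem regular, which is a warning sign that no soft genericity argument based on perturbations that cannot act near the tangency point will close this. A second, structural problem: even if Sard--Smale gave you a residual set of $Z$'s, the proposition is needed for the specific $Z=\phi_*Y$, where $Y$ was already fixed to make $\hat{\matheu{M}}^Y_0(L,\beta)$ regular; re-choosing $Z$ independently of $Y$ destroys the lifting correspondence of Lemma \ref{lifting} that equates the downstairs tangency count with the Maslov index $2$ counts upstairs.

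The missing idea is to use the branched cover rather than fight the jet condition downstairs. In the paper, one lifts $v$ to $u\in\hat{\matheu{M}}^Y_0(L,\beta)$ and identifies the deformation theory of the tangency problem with that of $u$ subject to the simple interior constraint $t(u(0))=0$: there is a short exact sequence of sheaves
\begin{equation*}
0\rightarrow \matheu{E}_u \xrightarrow{\phi_*} \matheu{E}_v \xrightarrow{dj^f_{n,0}} \underline{\mathbb{C}}^{n+1}\rightarrow 0,
\end{equation*}
whose exactness at the stalk over $z=0$ is the elementary statement that multiplication by $t(z)^n$ (with $t$ vanishing to order exactly $1$) realizes the kernel of the $n$-jet map on holomorphic germs, in the local model $(t^{n+1}-x_1)\subseteq\mathbb{C}^{n+1}\rightarrow\mathbb{C}^n$. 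Then Fredholm regularity of $u$ (i.e.\ $H^1(\matheu{E}_u)=0$), which \emph{is} achievable by generic $Y$ supported away from $R_+$ because upstairs the constraint is just passing through a divisor at an interior point, forces surjectivity of $(dj^f_{n,0})_v$ via the long exact sequence. In short: transversality is won upstairs, where standard perturbation arguments apply, and transported downstairs by the covering; your proposal tries to win it downstairs directly, where the allowed perturbations are blind to the relevant region.
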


\begin{proof}
Let $v\in (j^f_{n,0})^{-1}(0)$. By the work of Lemma \ref{lifting}, there exists $u\in \matheu{M}_0^Y(L,\beta)$ such that $\phi\circ u = v$. By differentiating the maps:
\begin{equation*}
 \matheu{M}_0^Y(L,\beta)\xrightarrow{\phi}   \matheu{M}^Z(L_{cl},\alpha)\xrightarrow{j^f_{n,0}} \mathbb{C}^{n+1},
\end{equation*}
we obtain a sequence of vector spaces:
\begin{equation}\label{regularity ses}
 T_u\matheu{M}_0^Y(L,\beta)\xrightarrow{\phi_*}   T_v\matheu{M}^Z(L_{cl},\alpha)\xrightarrow{(dj^f_{n,0})_v} \mathbb{C}^{n+1}.
\end{equation}
We prove regularity by showing that this is actually a short exact sequence, which we call the regularity sequence.

Recall that the tangent space $T_v\matheu{M}^Z(L_{cl},\alpha)$ is the kernel of the linearization of the perturbed $\overline{\partial}$-equation. This looks like:
\begin{equation*}
    D_v: \Gamma\left(D,v^*T\mathbb{P}^n, v_{\partial D}^* TL_{\cl}\right) \rightarrow \Omega^{0,1}(D,v^*T\mathbb{P}^n).
\end{equation*}
The same applies to $T_u\matheu{M}_0^Y(L,\beta)$, except that the constraint $t(u(0))=0$ restricts the domain of the linearized operator a bit:
\begin{equation*}
    D_u: \Gamma_0\left(D,u^*TX_f, u_{\partial D}^* TL\right) \rightarrow \Omega^{0,1}(D,u^*TX_f),
\end{equation*}
where:
\begin{equation*}
    \Gamma_0\left(D,u^*TX_f, u_{\partial D}^* TL\right) = \left\{ \xi\in \Gamma\left(D,u^*TX_f, u_{\partial D}^* TL\right) \ \ |\ \ \xi_0\in T_{u(0)}R_f\right\}.
\end{equation*}
The best way to understand the regularity sequence (\ref{regularity ses}) is by examining the sheafy versions of $\ker(D_u)$ and $\ker(D_v)$. For an open set $U\subseteq D$, let:
\begin{align*}
    \matheu{E}_u(U) &= \{\xi\in \Gamma\left(U,u^*TX_f, u_{\partial D}^* TL\right) \ \ |\ \ \xi_0\in T_{u(0)}R_f \ \ \text{and}\ \ D_u(\xi)=0\}\\
    \matheu{E}_v(U) &= \{\xi\in \Gamma\left(U,v^*T\mathbb{P}^n, v_{\partial D}^* TL_\cl\right) \ \ |\ \  D_v(\xi)=0\}.
\end{align*}
To obtain the regularity sequence, it suffices to show that we have a short exact sequence of sheaves:
\begin{equation}\label{sheaf ses}
    0\rightarrow \matheu{E}_u \xrightarrow{\phi_*} \matheu{E}_v\xrightarrow{dj^f_{n,0}} \underline{\mathbb{C}}^{n+1}\rightarrow 0,
\end{equation}
where $\underline{\mathbb{C}}^{n+1}$ is a skyscrapper sheaf at $0\in D$. Indeed, Fredholm regularity of $D_u$ means that $H^1(\matheu{E}_u)=0$ and so, by appealing to the long exact sequence in sheaf cohomology, we get the short exact sequence in (\ref{regularity ses}).

Looking at the sequence (\ref{sheaf ses}), observe that the map $\phi_*$  restricts to a sheaf isomorphism on the punctured disc $D\backslash 0$. It means that both kernel and cokernel are supported at $0$. Because of the identity principle for solutions of the Cauchy-Riemann equations:
\begin{equation*}
D_u(\xi)=0,   
\end{equation*}
we can already deduce that the sheaf map $\phi_*:\matheu{E}_u \rightarrow{} \matheu{E}_v$ is injective. 

Next, as the cokernel is supported at $0$, we can compute it by trivializing near $u(0)\in R_f$. In (resp. below) a small neighborhood of $u(0)$, the perturbation data vanishes and $D_u$ (resp. $D_v$) is the Dolbeaux operator. Furthermore, following the conventions of Remark \ref{convention coordinate equals 1}, $f$ defines a regular function in a neighborhood of $v(0)$, and the branched covering has the local model:
\begin{align*}
    (t^{n+1}-x_1)\subseteq \mathbb{C}^{n+1} \rightarrow \mathbb{C}^n \\
    (t,x_1,\dots,x_n)\mapsto (x_1,\dots,x_n).
\end{align*}
in which $u(0)=0\in \mathbb{C}^{n+1}$, $v(0)\in \mathbb{C}^n$ and $f=x_1$. In this local model, we have a trivializing frame for both $TX_f$ and $T\mathbb{P}^n$. The first is given by the vector fields $\partial_t,\partial_{x_2}$,...,$\partial_{x_n}$ and the later is given by $\partial_{x_1} ,\partial_{x_2}$,...,$\partial_{x_n}$. Moreover, the action of $\phi_*$ on this frame is:
\begin{equation*}
    \phi_*(\partial_t) = (n+1)t^n \partial_{x_1} \ \ \text{and}\ \ \phi_*(\partial_{x_k})=\partial_ {x_k} \ \text{for}\ 2\leq k \leq n.
\end{equation*}
The holomorphic disc $u$ has a coordinate description in this chart:
\begin{equation*}
    u(z)=(t(z),x_1(z),\dots,x_n(z)).
\end{equation*}
This is defined over a small open set $0\in U\subseteq D$. Moreover:
\begin{equation*}
    \matheu{E}_u(U) = \{f_0(z)\partial_t+f_2(z)\partial_{x_2}+\dotsi+f_n(z)\partial_{x_n}\ | \overline{\partial} f_i = 0 \ \ \text{and}\ \ f_0(0)=0\},
\end{equation*}
and at the same time:
\begin{equation*}
\matheu{E}_v(U) = \{f_1(z)\partial_{x_1}+f_2(z)\partial_{x_2}+\dotsi+f_n(z)\partial_{x_n}\ | \overline{\partial} f_i =0\}.
\end{equation*}
Furthermore, the jet map (recall the conventions of Remark \ref{convention coordinate equals 1}) has the formula:
\begin{align*}
    dj^f_{n,0}: \matheu{E}_v(U) &\rightarrow \mathbb{C}^{n+1}\\
    \sum_{k=1}^n f_k(z)\partial_{x_k} &\mapsto j_{n,0}(f_1(z)).
\end{align*}
Therefore, by taking stalks at $0\in D$ in the sequence (\ref{sheaf ses}), we reduce our transversality problem to the following short exact sequence:
\begin{equation}\label{ses-at-0}
0\rightarrow \{f\in  \mathscr{H}_0 \ |\ f(0)=0\}\xrightarrow{\times t(z)^n} \mathscr{H}_0 \xrightarrow{j_{n,0}} \mathbb{C}^{n+1} \rightarrow 0    ,
\end{equation}
where $\mathscr{H}_0$ is the stalk at $0$ of the sheaf of holomorphic functions on $D$.
The sequence in (\ref{ses-at-0}) above is exact because the vanishing order of $t$ at $0$ is exactly $1$, i.e $t(z)=z\cdot \epsilon$, where $\epsilon$ is an invertible element of $\mathscr{H}_0$.
\end{proof}

As a consequence of Proposition \ref{transversality}, of Lemma \ref{lifting}, and Lemma \ref{degree from submanifold}, we can actually see that $m_{0,\beta}(L)$ is the local degree of the map:
\begin{align}\label{wide jet map}
    \Psi_1: \matheu{M}^Z(L_{\cl},\alpha)\times\mathcal{H}\rightarrow \mathbb{C}^{n+1}\times \mathcal{H}\times L\\
    (v,f)\mapsto (j_{n,0}(f\circ v), f, v(1))\notag
\end{align}
near $\{0\}\times\{f_0\}\times L$ (recall that we only have properness in this region, see Lemma \ref{properness}). We now have all the necessary ingredients to prove the main result of this section.

\begin{theorem}\label{m0-numbers}
Let $X_f=V(t^{n+1}-f)$ be a smooth hypersurface of degree $n+1$, and let $\phi: X_f\rightarrow \mathbb{P}^n$ be the linear projection onto the hyperplane $\{t=0\}$. Let $L_{\text{cl}}\subset H$ be the Clifford torus. If $f$ is generic and nearly degenerate, then $L_{\text{cl}}$ lifts to a totally real torus $L$ in $X_f$. Moreover, counts of Maslov index $2$ discs with respect to an anti-canonical K\"ahler form are given by the formula:
\begin{equation*}
\sum_{\beta\in \phi^{-1}(\alpha)} m_{0,\beta}(L) = \frac{(n+1)!}{\alpha_0!\dots \alpha_n!},    
\end{equation*}
for any class $\alpha=(\alpha_0,\dots,\alpha_n)\in H_2(\mathbb{P}^n,L_\cl)$ of Maslov index $2(n+1)$, except when $\alpha=\alpha_s$ is the spherical class. In that case:
\begin{equation*}
    m_{0,\beta}(L)=0,
\end{equation*}
for all $\beta\in \phi^{-1}(\alpha_s)$.
\end{theorem}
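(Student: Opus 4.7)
The plan is to chain together the three main ingredients already developed in the excerpt. For a non-spherical class $\alpha$ of Maslov number $2(n+1)$, Lemma \ref{lifting} identifies $\sum_{\beta\in\phi^{-1}(\alpha)} m_{0,\beta}(L)$ with the degree of the boundary evaluation $ev_1: \hat{\tau}^Z_0(L_\cl,\alpha)\to L_\cl$. Proposition \ref{transversality} combined with the submersiveness of $v \mapsto v(1)$ on $\matheu{M}^Z(L_\cl,\alpha)$ (evident from Lemma \ref{coordinate wise description of v} and the Mobius parametrization of Lemma \ref{decomposition-into-mobius-transformations}) then implies that the relevant fiber of $\Psi_1$ is transversely cut out, so this degree coincides with the local degree of $\Psi_1$ near $\{0\}\times\{f\}\times L$.

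From there, I would use a parameter-cobordism argument of the type formalized in Lemma \ref{degree through cobordism}: linearly interpolate the perturbation datum $Z$ down to $0$ and, simultaneously, deform $f$ to $f_0$ through nearly degenerate polynomials. Provided both deformations remain small, Lemma \ref{properness} ensures $\Psi_1$ is proper on a fixed open neighborhood of $\{0\}\times\{f_0\}\times L$ throughout the homotopy, so the local degree is invariant. At the endpoint $Z=0$, $f=f_0$, the map $\Psi_1$ specializes to the unperturbed $\Phi$ of (\ref{jet-map-with-perturbation}), and Lemma \ref{degree calculated} evaluates its local degree as $(n+1)!/(\alpha_0!\cdots\alpha_n!)$, giving the claimed formula.

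For the spherical class, Proposition \ref{spherical-class-moduli-empty} already rules out the unperturbed tangency moduli space for $f$ near $f_0$. I would extend the emptiness to the perturbed $\hat{\tau}^Z_0(L_\cl,\alpha_s)$ by a Gromov compactness argument: if $(v_i, Z_i)$ with $Z_i\to 0$ gave a sequence of solutions, then $v_i$ would Gromov-converge to a stable nodal disc satisfying the order-$n$ tangency at the limit marked point, and the linear-algebra/Mobius obstruction established in Proposition \ref{spherical-class-moduli-empty} would still rule out the dominant component of the limit. Hence the perturbed moduli space is empty for sufficiently small $Z$, and Lemma \ref{lifting} forces $m_{0,\beta}(L)=0$ for every $\beta\in\phi^{-1}(\alpha_s)$.

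The principal obstacle I anticipate is coordinating the smallness of the abstract perturbation $Y$ (hence $Z=\phi_*Y$) with the closeness of $f$ to $f_0$, so that the parameter homotopy stays inside the properness region provided by Lemma \ref{properness}; outside that region, lower Maslov disc bubbles could escape through the Gromov boundary and spoil the degree count. This coupling is unavoidable because the properness statement is only local near $\{0\}\times\{f_0\}\times L$, and one must verify that the smallness compatible with properness is also compatible with the smallness required in the transversality argument of Proposition \ref{transversality}.
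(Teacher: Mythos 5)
Your proposal is correct and follows essentially the same route as the paper: identify $\sum_{\beta\in\phi^{-1}(\alpha)} m_{0,\beta}(L)$ with a degree via Lemma \ref{lifting} and Proposition \ref{transversality}, transfer it to the unperturbed count by a cobordism as in Lemma \ref{degree through cobordism} (using the properness of Lemma \ref{properness}), and conclude with Lemma \ref{degree calculated}, with the spherical class handled through Proposition \ref{spherical-class-moduli-empty}. The only cosmetic differences are that the paper merely scales the perturbation datum $sZ$, $s\in[0,1]$, without separately deforming $f$ to $f_0$ (since $\mathcal{H}$ is already a factor of the domain of $\Psi_1$ and the degree is taken locally near $\{0\}\times\{f_0\}\times L$), and it leaves the Gromov-compactness persistence of emptiness for the perturbed spherical moduli space implicit, which you spell out.
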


\begin{proof}
The only part of the theorem above that we haven't proved yet is the degree formula when $\beta$ is not spherical. The key is that we can scale down the perturbation datum $Z$ by a real number $s\in [0,1]$, without losing regularity of the moduli space $\matheu{M}^{sZ}(L_\cl,\alpha)$, because $Z$ is small and $\matheu{M}^0(L_\cl,\alpha)$ is Fredholm regular. We can therefore deform the map (\ref{big jet map}) through a cobordism:
\begin{align}\label{big jet map}
    \Psi: \matheu{M}_{[0,1]}^Z(L_{\cl},\alpha)\times\mathcal{H}\rightarrow \mathbb{C}^{n+1}\times \mathcal{H}\times L\\
    (v,f)\mapsto (j_{n,0}(f\circ v), f, v(1)),\notag
\end{align}
where 
\begin{equation*}
\matheu{M}_{[0,1]}^Z(L_{\cl},\alpha) = \{ (v,s) \ | \ s\in [0,1] \ \text{and} \ v\in \matheu{M}^{sZ}(L_\cl,\alpha) \}.
\end{equation*}
By applying Lemma \ref{degree through cobordism}, we deduce that $m_{0,\beta}(L)$ agrees with the degree $n_{\alpha}$ of the jet map defined in (\ref{jet-map-with-perturbation}). Finally, the formula for $n_{\alpha}$ was obtained in Lemma \ref{degree calculated}, and the corresponding formula for $m_{0,\beta}(L)$ follows.
\end{proof}

%%%%%%%%%%%%%%%%%%%%%%
%%%%%%%%%%%%%%%%%%%%%%
%%%%%%%%%%%%%%%%%%%%%%
\subsection{Super-potential} Recall that the super-potential associated with a Lagrangian torus $L\subseteq X_f$ is a function on its mirror space:
\begin{equation*}
    M_L = \text{Spec}(\mathbb{C}[H_1(L,\mathbb{Z})]).
\end{equation*}
For an Abelian group $A$, the algebra $\mathbb{C}[A]$ has a generator $z_a$ for each element $a\in A$, subject to the relation:
\begin{equation*}
    z_{a+a'} = z_az_a'.
\end{equation*}
The potential function is then given by the formula:
\begin{equation*}
    W = \sum_{\mu(\beta)=2} m_{0,\beta}(L)z_{\partial \beta}.
\end{equation*}
It is somewhat easier to write the potential function on the mirror of $L_{\text{cl}}$ first. By construction, loops in $H_1(L_{\text{cl}})$ lift to $L$ if and only if they link trivially around the toric boundary. In other words, if they lie in the kernel of the map:
\begin{align*}
    H_1(L_\cl) &\rightarrow \mathbb{Z}_{n+1}\\
    \gamma &\mapsto u_{\gamma}\cdot D_0,
\end{align*}
where $u_{\gamma}$ is any a disc whose boundary is $\gamma$, and $D_0=V(x_0\dotsi x_n)\subseteq \mathbb{P}^n$ is the toric boundary. As a consequence, we have a short-exact sequence:
\begin{equation*}
  0\rightarrow  H_1(L)\rightarrow H_1(L_{\text{cl}}) \rightarrow \mathbb{Z}_{n+1} \rightarrow 0.
\end{equation*}
Passing to group algebras, we obtain:
\begin{equation} \label{group algebra ses}
0\rightarrow  \mathbb{C}[H_1(L)]\rightarrow \mathbb{C}[H_1(L_{\text{cl}})] \rightarrow \mathbb{C}[z]/[z^{n+1}-1]  \rightarrow 0.
\end{equation}
This short exact sequence describes a cyclic $n+1$ covering map $\pi:M_{L_{\text{cl}}}\rightarrow M_L$.
The space $M_{L_{\text{cl}}}$ has natural coordinates coming from the paths $\gamma_k=\partial u_k$, where the discs $u_k$ are the generators that we defined in (\ref{generators of relative H2}). We therefore set $z_k = z_{\gamma_k}$, and we note that these elements satisfy the equation:
\begin{equation*}
    z_0\dots z_n = 1.
\end{equation*}
With this set of coordinates, we can compute the pullback of the potential-function using Theorem \ref{m0-numbers}:
\begin{equation}\label{pullback of W}
    \pi^*W = \left(z_0+\dots +z_n\right)^{n+1}-(n+1)!.
\end{equation}
To compute $W$ itself, we need a set of coordinates in $M_L$. This is not difficult once we understand that the quotient map in the short exact sequence (\ref{group algebra ses}) is:
\begin{align*}
    \mathbb{C}[H_1(L_{\text{cl}})] &\rightarrow \mathbb{C}[z]/[z^{n+1}-1]\\
    z_{\alpha} &\mapsto z^{\sigma(\alpha)},
\end{align*}
where $\sigma(\alpha) = \alpha_0 + \dots + \alpha_{n}$.
Therefore, by setting $y_k=z_k/z_0$, we produce a homomorphism of algebras:
\begin{equation*}
    \mathbb{C}[y_1,\dots,y_n] \rightarrow \mathbb{C}[H_1(L)].
\end{equation*}
Using the equation $y_1\dots y_n = z_0^{-(n+1)}$, we see that the morphism above is localization at the product $y_1\dots y_n$. As a conclusion of this analysis, we obtain the following result.

\begin{proposition}\label{formula-for-W}
There is an embedding of $M_L$ in $\mathbb{C}^{n}$ as the complement of the standard set of axes. In this coordinate system, the potential function is given by
\begin{equation} \label{potential-function}
    W = \frac{(1+y_1+\dots+y_n)^{n+1}}{y_1\dots y_n}-(n+1)!.
\end{equation}
\end{proposition}

\begin{remark}
The super-potential above agrees with Givental's Landau-Ginzburg model associated with $X_f$, which is typically computed from its Gromov-Witten invariants. See for instance \cite{Katzarkov-Przyj} for an overview, and section 3 of \cite{Przyj} for some explicit formulae. 
\end{remark}

The potential function $W$ has the expected critical values:
\begin{equation*}
    w_b = -(n+1)! \ \ \text{and}\ \ w_s=(n+1)^{n+1}-(n+1)!. 
\end{equation*}
These are the eigenvalues of quantum multiplication by $c_1$ on the cohomology ring of $X_f$. We call $w_s$ the small critical value; the fiber there has an isolated non-degenerate singularity, and we often call $w_s$ the non-degenerate critical value. We call $w_b$ the big critical value, and the fiber there is not reduced, but it's reduction is the smooth $(n-1)$-dimensional pair of pants.

Finally, we note that the relationship between $M_L$ and $M_{L_\text{cl}}$ runs even deeper. Indeed, the potential function $W_\cl$ for $L_{\text{cl}}$ is known in the literature. In the same set of coordinates used in equation (\ref{pullback of W}), this super-potential has the formula:
\begin{equation*}
    W_\cl = z_0+\dots+z_n.
\end{equation*}
With that in mind, We obtain a commutative diagram:
\begin{displaymath}
\begin{tikzcd}
M_{L_{\text{cl}}}\arrow{d}{\pi} \arrow{r}{W_{\text{cl}}} & \mathbb{C} \arrow{d}{z^{n+1}}\\
M_L\arrow{r}{\hat{W}} & \mathbb{C}
\end{tikzcd}
\end{displaymath}
where:
\begin{equation}
    \hat{W} = W + (n+1)!.
\end{equation}

We will explore this relationship in detail and use it to study homological mirror symmetry for the super-potential $W$.

%%%%%%%%%%%%%%%%%%%%%%%%
%%%%%%%%%%%%%%%%%%%%%%%%
%%%%%%%%%%%%%%%%%%%%%%%%

\section{Generation of the small component}
The goal of this section is to prove that the monotone Lagrangian torus at the center of our (partial) SYZ fibration generates the small component of the Fukaya category. This section contains no new results, but is instead a compilation of all the ingredients needed to establish homological mirror symmetry over the small component.

\subsection{Monotone Floer theory, review} Let $(X,\omega)$ be a monotone symplectic manifold, such that $\omega$ an anti-canonical form, and let $L\subseteq X$ be a monotone Lagrangian brane. As far as we know, there are two main approaches to associating an $A_{\infty}$-algebra $A=CF(L)$ with $L$. One approach is to count holomorphic polygons with boundaries on small push-offs of $L$, following the same lines of \cite{PL-theory}; this method makes use of the monotonicity assumption to achieve transversality and compactness. Another more general approach, carried out by Fukaya-Oh-Ohta-Ono in \cite{FOOO}, relies on chain-level intersection theory in the moduli spaces $\matheu{M}_{d+1}(L)$ of discs with boundary on $L$ . The former approach is the one adopted by N. Sheridan in \cite{sheridan-fano-mirror-symmetry} and has its own advantages: It yields an $A_{\infty}$-algebra over $\mathbb{Z}$, and its underlying $\mathbb{Z}$-module is small, generated only by the intersection points of $L$ with one of its nearby perturbations. In our work however, we find it more convenient to work with the later approach, as it comes with a \emph{divisor axiom}, which makes our Floer cohomology computations easier. We therefore recall the main characteristics of this construction.

In \cite{Fukaya_cyclic}, K. Fukaya constructs an $A_{\infty}$-algebra structure $(m_k)_{k\geq 1}$ on the $\mathbb{Z}$-graded vector space:
\begin{equation*}
    A = H^*(L,\mathbb{C}[[q]]),
\end{equation*}
where $q$ is a formal parameter of degree $2$. Ignoring all analytical, topological and algebraic complications, the $A_{\infty}$-structure maps:
\begin{equation*}
m_{k}: A^{\otimes k}\rightarrow A[2-k],    
\end{equation*}
 have a sum decomposition:
\begin{equation}\label{structure maps}
    m_{k} = \sum_{\beta\in H_2(X,L)} q^{\langle \omega,\beta\rangle} m_{k,\beta},
\end{equation}
with respect to topological types $\beta\in H_2(X,L)$, and each term $m_{k,\beta}$
is a cohomological Fourier-Mukai transform based on the correspondence:
\begin{center}
\begin{tikzcd}\label{two-lfs}
& \overline{\matheu{M}_{k+1}(L,\beta)} \arrow[dl,"ev_k\times\dotsi\times ev_1"'] \arrow[dr,"ev_0"]  &\\
L^k&                 &L
\end{tikzcd}
\end{center}
where $\overline{\matheu{M}_{k+1}(L,\beta)}$ is the Gromov compactification of the space of holomorphic discs in the class $\beta$, with boundary on $L$ and carrying $k+1$ boundary marked points. The non-constant discs are responsible for terms of $m_k$ that involve non-constant powers of $q$, these are sometimes called instanton corrections. For the reader's convenience, we recall the index formula that shows which Maslov numbers are relevant in each term of $m_{k}$:
\begin{equation}\label{dimension-formula}
    \dim \matheu{M}_{k+1}(L,\beta) = k-2+n+\mu_L(\beta).
\end{equation}

In Lemma 13.2 of \cite{Fukaya_cyclic}, it is proved that $A$ is strictly unital and that the structure maps satisfy a divisor axiom: Given $b\in A^1$, an integer $k\geq 0$, elements $x_1,\dots,x_k \in A$, and $s\geq 0$ another integer then:
\begin{equation*}
    \sum_{s_0+\dots+s_k=s} m_{k+s,\beta}(b^{\otimes s_0},x_1,b^{\otimes s_1},\dots,x_k,b^{\otimes s_k}) = \frac{1}{s!}\left(\partial\beta\cap b\right)^s m_{k,\beta}(x_1,\dots,x_k).
\end{equation*}
Recall that when $k=0$, the element $m_{0,\beta}\in\mathbb{C}$ is simply the regular count of isolated holomorphic discs with boundary on $L$.

The $A_{\infty}$-algebra can be deformed using \emph{bounding co-chains}, which are elements $b\in H^1(L,\mathbb{C})$ for which we have an equation:
\begin{equation}\label{potential equation}
    m_1(b)+m_2(b,b)+\dots = p(b)\mathbf{1}_A,
\end{equation}
where $p(b)$ is an element of $\mathbb{C}[[q]]$. The $b$-deformed $A_{\infty}$-structure is given by the equation:
\begin{equation*}
    m^b_{k}(x_1,\dots,x_k) = \sum_{s_0+\dots+s_k=s} m_{k+s}(b^{\otimes s_0},x_1,b^{\otimes s_1},\dots,x_k,b^{\otimes s_k}).
\end{equation*}
In our setting, equation (\ref{potential equation}) holds automatically, and the assignment:
\begin{equation*}
p:H^1(L,\mathbb{C})\rightarrow \mathbb{C}[[q]],    
\end{equation*}
is called the \emph{potential function} of $L$.

\subsection{Monotone Floer theory, calculation} We now apply the general framework above to compute Fukaya's $A_{\infty}$-algebra associated with the monotone Lagrangian torus $L\subset X\backslash D$ constructed in Proposition \ref{partial-fibration}. Recall that:
\begin{equation*}
 X= V(t^{n+1}-f) \subseteq \mathbb{P}^{n+1},   
\end{equation*}
where $f$ is a generic homogeneous polynomial of degree $n+1$ in the variables $x_0,\dots,x_n$, sufficiently close to the product:
\begin{equation*}
f_0 = x_0\cdot x_1\dotsi x_n,    
\end{equation*}
which in turn is the defining equation of the toric boundary of $\mathbb{P}^n$. The index 1 Fano hypersurface $X$ above comes with a cyclic covering map (drop $t$):
\begin{equation*}
\phi: X\rightarrow \mathbb{P}^n,    
\end{equation*}
 branched over the zero locus of $f$. The appropriate K\"ahler form on $X$ is constructed in Lemma \ref{Kahler-metric}, the monotone Lagrangian torus of interest is the pre-image $L=\phi^{-1}(L_{\cl})$, and it lives in the complement of the ramification (anti-canonical) divisor $D$. In particular we have an area formula for discs $\beta\in H_2(X,L)$:
\begin{equation*}
 \langle \omega,\beta\rangle = \beta\cdot D.   
\end{equation*}

This formula explains in particular why we only need the power series ring $\mathbb{C}[[q]]$, as opposed to the Novikov ring $\Lambda_{\mathbb{C}}$.

Next, if we use the dimension formula (\ref{dimension-formula}), one sees that only discs of Maslov number $2$ contribute to the potential function $p$. As one expects, this potential function is tightly related to the Landau-Ginzburg potential $W$ that we computed in (\ref{potential-function}). The only difference is that when we defined $W$, we did not take areas into account, and as such we don't have the extra parameter $q$. Indeed, each bounding co-chain $b\in H^1(L,\mathbb{C})$ gives a local system $\xi_b$ on $L$:
\begin{align*}
    \xi_b: \pi_1(L)&\rightarrow \mathbb{C}^*\\
             \gamma &\mapsto \exp(\gamma\cdot b).
\end{align*}
Using the divisor axiom, it can be seen that:
\begin{equation}\label{potential-to-LG}
    p(b) = qW(L,\xi_b).
\end{equation}
In fact, when we compute the $b$-deformed $A_{\infty}$-algebra structure, it is the same as computing Fukaya's $A_{\infty}$-algebra structure for $(L,\xi_b)$.

Let $x,y\in H^1(L,\mathbb{C})\subseteq A^1$ be degree $1$ elements in our $A_{\infty}$-algebra $A$. Observe that we have:
\begin{equation}\label{m1-and-m2}
    m_1^b(x) = (dp)_b(x) \ \ \ \text{and} \ \ \ m_2^b(x,y) = (d^2p)_b(x,y).
\end{equation}
In fact, similar formulae hold for higher $A_{\infty}$-products as well. 
\begin{lemma}\label{critical is Clifford}
When $b$ is a non-degenerate critical point of $p$, $m_1^b = 0$, and we have an isomorphism of associative algebras:
$$(A,m^b_2) \cong \text{\emph{Cl}}(H^1(L,\mathbb{C}))\otimes \mathbb{C}[[q]].$$ 
\end{lemma}
\begin{proof}
The two equations in (\ref{m1-and-m2}) already give the desired result on $A^1$, and it suffices to show that $A$ is generated in degree $1$. 

Notice that if we drop the instanton corrections, the resulting $A_{\infty}$-algebra $A_0 = A\otimes \mathbb{C}[[q]]/(q)$ computes Fukaya's $A_{\infty}$-algebra of the exact Lagrangian manifold $L$ in the exact symplectic manifold $X\backslash D$, which is a formal exterior algebra on its degree $1$ part.

Now let $A_+ = \oplus_{i\geq 1} A^i$ be the ideal of all elements of positive degree, and consider the product map:
\begin{equation*}
    m_2^b : A^{\oplus 2}_+ \rightarrow A_+.
\end{equation*}
This is a map of finitely generated $\mathbb{C}[[q]]$-modules and, by our previous observation, it is surjective when restricted to the fiber at $0$; the unique maximal ideal of $\mathbb{C}[[q]]$. By Nakayama's lemma, we deduce that $m_2^b$ is surjective. Next, using the Leibniz rule, one sees that the differential $m_1^b$ vanishes identically, and that $A$ is generated in degree $1$. Therefore, the product structure is that of the usual Clifford algebra associated with $(d^2p)_b$. But recall that $b$ was assumed to be a non-degenerate critical point, so the lemma follows.
\end{proof}

Next, we need to compute the $A_{\infty}$-category associated with $L$ over $\mathbb{C}$. The underlying (now $\mathbb{Z}_2$-graded) vector space is:
\begin{equation*}
    \matheu{A} = H^*(L,\mathbb{C}),
\end{equation*}
and the $A_{\infty}$-structure maps $(\mu_k)_{k\geq 1}$ are the evaluations of $(m_k)_{k\geq 1}$ (from (\ref{structure maps})) at $q=1$.  There are no convergence issues to worry about because $L$ is monotone.

\begin{proposition}\label{A-side-HMS2}
The $\mathbb{Z}_2$-graded $A_{\infty}$-algebra $\matheu{A}$ is the formal Clifford algebra $\text{\emph{Cl}}_n(\mathbb{C})$ .
\end{proposition}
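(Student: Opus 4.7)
The plan is to locate a non-degenerate critical point of the formal potential $p$, deform the $A_\infty$-structure by it to land on the small-eigenvalue component, invoke Lemma~\ref{critical is Clifford} to recognize the resulting product as a non-degenerate Clifford product, and finally appeal to intrinsic formality of semisimple algebras to upgrade the associative-algebra statement to an $A_\infty$-statement.

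More concretely, I would first select $b_0 \in H^1(L,\mathbb{C})$ whose associated local system $\xi_{b_0}$ corresponds via $b \mapsto \xi_b$ to the non-degenerate critical point of $W$ lying over the small critical value $w_s = (n+1)^{n+1}$ computed at the end of Section~3. By equation~(\ref{potential-to-LG}), $b_0$ is then a non-degenerate critical point of $p$ as well. Lemma~\ref{critical is Clifford} immediately yields $m_1^{b_0} = 0$ together with an isomorphism of associative algebras $(A, m_2^{b_0}) \cong \mathrm{Cl}(H^1(L,\mathbb{C})) \otimes \mathbb{C}[[q]]$ determined by the non-degenerate Hessian $(d^2 p)_{b_0}$. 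Specialization at $q=1$ is harmless because monotonicity of $L$ bounds the Maslov index from below and, in each fixed $\mathbb{Z}_2$-degree, only finitely many disc classes contribute to each $\mu_k$; this produces the $\mathbb{Z}_2$-graded $A_\infty$-algebra $\matheu{A}^{b_0}$, whose underlying associative algebra is exactly $\mathrm{Cl}_n(\mathbb{C})$.

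The remaining content of the proposition is that the higher products $\mu_k^{b_0}$ for $k \geq 3$ vanish up to $A_\infty$-quasi-isomorphism, i.e.\ that $\matheu{A}^{b_0}$ is formal. The key observation is that over $\mathbb{C}$ the Clifford algebra $\mathrm{Cl}_n(\mathbb{C})$ is Morita equivalent to $\mathbb{C}$ (for $n$ even) or $\mathbb{C} \oplus \mathbb{C}$ (for $n$ odd); in either case it is semisimple, and therefore $HH^i(\mathrm{Cl}_n(\mathbb{C}), \mathrm{Cl}_n(\mathbb{C})) = 0$ for all $i>0$. By standard obstruction theory (Kadeishvili's minimal model theorem applied to Hochschild cohomology), every $A_\infty$-structure on $\mathrm{Cl}_n(\mathbb{C})$ realizing the given $\mu_2$ is $A_\infty$-quasi-isomorphic to the strictly associative (formal) one. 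This gives $\matheu{A}^{b_0} \simeq \mathrm{Cl}_n(\mathbb{C})$, and identifying $\matheu{A}$ in the statement with this self-Floer cohomology of $(L,b_0)$, i.e.\ with the object that generates the small component in Theorem~\ref{HMS2}, completes the proof.

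The main obstacle I expect is justifying intrinsic formality cleanly in the $\mathbb{Z}_2$-graded setting, since the standard formulations of Kadeishvili-type formality are usually phrased for $\mathbb{Z}$-graded algebras and one has to verify that the Hochschild vanishing still controls $\mathbb{Z}_2$-graded $A_\infty$-deformations. A more direct alternative, entirely internal to the construction of this section, would be to use the divisor axiom to express each $\mu_k^{b_0}$ on degree-one inputs as a higher derivative of $p$ at $b_0$, reducing the vanishing of higher products on $H^1(L,\mathbb{C})$ to the Morse lemma at a non-degenerate critical point, and then to propagate the vanishing to the full $\matheu{A}$ using the $A_\infty$-relations together with the fact (established in the proof of Lemma~\ref{critical is Clifford}) that $\matheu{A}$ is generated in degree one over the Clifford product.
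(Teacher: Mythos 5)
Your strategy is essentially the paper's: find a non-degenerate critical point of $p$, apply Lemma \ref{critical is Clifford} to get $\mu^1=0$ and a non-degenerate Clifford product after setting $q=1$, and finish by intrinsic formality of the Clifford algebra. Two points, however, need tightening. The proposition concerns the \emph{undeformed} algebra $\matheu{A}$ (the $m_k$ at $b=0$, evaluated at $q=1$), not a deformed algebra $\matheu{A}^{b_0}$; your closing move, ``identifying $\matheu{A}$ with the self-Floer cohomology of $(L,b_0)$,'' is only legitimate if $b_0=0$. That is exactly the small computation the paper makes: by (\ref{potential-to-LG}) and the explicit formula (\ref{potential-function}), the non-degenerate critical point of $W$ over $w_s$ sits at $y_1=\dots=y_n=1$, i.e.\ at the trivial local system, so $b=0$ is already a non-degenerate critical point of $p$ and no deformation is needed. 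As written, your argument establishes the conclusion for $(L,\xi_{b_0})$ and leaves the identification with $\matheu{A}$ unjustified; adding this one-line check closes the gap.

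On formality, the paper simply cites Corollary 6.4 of \cite{sheridan-fano-mirror-symmetry}, which is precisely the $\mathbb{Z}_2$-graded intrinsic formality of Clifford algebras that you flag as the main obstacle. Your semisimplicity/Hochschild-vanishing idea is the right heuristic, but the obstruction groups controlling $\mathbb{Z}_2$-graded $A_\infty$-structures are the $\mathbb{Z}_2$-graded (super) Hochschild cohomology of $\text{Cl}_n(\mathbb{C})$, not the ungraded one, so either carry out that graded computation or cite Sheridan's result as the paper does. I would also caution against your ``more direct alternative'': the divisor axiom only controls structure maps with repeated degree-one inputs (equivalently, the symmetrized part of the products on $H^1$), and the higher products need not vanish on the nose — formality is a statement up to $A_\infty$-quasi-isomorphism — so the Morse-lemma argument does not substitute for the intrinsic formality input.
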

\begin{proof}
By combining the identity (\ref{potential-to-LG}), and the formula of $W$ from (\ref{potential-function}), we see that $b=0$ is a critical point of the the potential function $p$. Going back to the proof of Lemma \ref{critical is Clifford}, we have seen that $m_1^0 = 0$, and that $m_2^0$ is given by the Hessian of $p$ at $0$. By setting $q=1$, we get that $\mu^1=0$, and that $\mu^2$ follows the Hessian of a non degenerate function on $H^1(L,\mathbb{C})$. It follows that $H(\matheu{A})$ is the Clifford algebra $\text{\emph{Cl}}_n(\mathbb{C})$, which is known to be intrinsically formal: see for example \cite{sheridan-fano-mirror-symmetry}, Corollary 6.4.
\end{proof}

\begin{remark}
This method of computing Floer cohomology appears in the work of Sheridan (see \cite{sheridan-fano-mirror-symmetry}, Theorem 4.3) and also in the work of Fukaya-Ohta-Ono-Oh (see \cite{FOOO-toric-fibers}, Theorem 5.5), and before them in the work of Cho (see \cite{Cho-products}, theorem 5.6, also corollary 6.4). 
\end{remark}

\subsection{The B-side and HMS} The homological algebra of isolated hypersurface singularities is greatly studied in the work of Dyckerhoff \cite{Dyckerhoff}. It is shown there that $\text{D}^{\pi}_{\text{sg}}(W^{-1}(w_s))$ is generated by the skyscraper sheaf $\matheu{O}_p$ of the singular point. It is also shown in \cite{Dyckerhoff} (see also \cite{Orlov-formal-completion}) that this category only depends on the formal completion of a neighborhood of the singular point. In particular, we have an equivalence of triangulated categories:
\begin{equation*}
    \D^{\pi}_{\text{sg}}(W^{-1}(w_s)) = D^{\pi} \text{MF}(\mathbb{C}[[z_1,\dots,z_n]],z_1^2+\dots+z_n^2).
\end{equation*}

In the equivalence above, passing to matrix factorizations requires a stabilization procedure explained in section 2 of \cite{Dyckerhoff}. The category $\text{MF}$ of matrix factorizations is a $\mathbb{Z}_2$-graded category and in this case it is generated by the (\emph{stablization} of the) residue field $\mathbb{C}$.  In section 5.5 of that same paper, the self-hom space is computed to be $\text{Cl}_n(\mathbb{C})$ with an identically vanishing differential.

Combining all of this together, we get:
\begin{lemma}\label{B-side-HMS2}
There is an equivalence of triangulated categories:
\begin{equation*}
  \emph{\D}^{\pi}_{\text{sg}}(W^{-1}(w_s)) = \emph{D}^{\pi}\text{Cl}_n(\mathbb{C}).  
\end{equation*}
\end{lemma}

\begin{remark}
We refer the reader to the work of J. Smith in \cite{JackSmith}, for a recent treatment of the homological algebra of isolated hypersurface singularities that is more adapted to homological mirror symmetry.
\end{remark}

We have now collected all the necessary ingredients to prove the main result of this section.

\textit{proof of Theorem \ref{HMS2}} Recall that the eigenspace corresponding to $w_s$ in: 
\begin{equation*}
    c_1\star(-): \text{QH}(X) \rightarrow \text{QH}(X),
\end{equation*}
has dimension 1, and as a consequence, any object in $\text{Fuk}(X)_{w_s}$ with non-zero Floer cohomology will split-generate. Refer to Corollary 2.19 and Proposition 7.11 of \cite{sheridan-fano-mirror-symmetry} for more details. In particular, the monotone Lagrangian torus $L$ split-generates. We have already computed its associated Fukaya $A_{\infty}$-algebra in Proposition \ref{A-side-HMS2}. Combining that with the result of Lemma \ref{B-side-HMS2}, we deduce the desired equivalence:
\begin{equation*}
    \D^{\pi}\text{Fuk}(X)_{w_s} \cong \D^{\pi}_{\text{sg}}(W^{-1}(w_s)).
\end{equation*}
\qed

%%%%%%%%%%%%%%%%%%%%%%%%%%%%%%%%%%%
%%%%%%%%%%%%%%%%%%%%%%%%%%%%%%%%%%%
%%%%%%%%%%%%%%%%%%%%%%%%%%%%%%%%%%%
\section{HMS in the toric limit}
In Proposition \ref{formula-for-W}, we counted Maslov index $2$ discs with boundary on the monotone Lagrangian torus $L\subseteq X_f$ constructed in Proposition \ref{partial-fibration}, where $X_f$ is the smooth index 1 Fano hypersurface in projective space $\mathbb{P}^{n+1}$, cut-out by an equation of the form:
\begin{equation*}
    X_f=V(t^{n+1}-f(x_0,\dots,x_n)),
\end{equation*}
where $f$ is a homogeneous polynomial of degree $n+1$, that is sufficiently close to the toric boundary $f_0=x_0\dotsi x_n$. The limit of these index $1$ Fano hypersurfaces is the singular toric Fano variety:
\begin{equation*}
    X_0 = V(t^{n+1}-x_0\dots x_n).
\end{equation*}
The super-potential function associated with $L$ has the following formula:
\begin{equation}\label{W_L}
    W_L = \frac{(1+y_1+\dots+y_n)^{n+1}}{y_1\dots y_n}-(n+1)!.
\end{equation}
In the mirror symmetry literature, the pair $(X_0,W)$ is a called a toric Landau-Ginzburg model for the index $1$ Fano hypersurface, we refer the reader to \cite{Katzarkov-Przyj} for more context.

Our goal for this section is to study homological mirror symmetry for $X_0$, which we view as the $B$-side, and its mirror super-potential $W_L$, which we view as the $A$-side. While the translation term $(n+1)!$ is crucial in the full HMS story of index $1$ Fano hypersurfaces, it actually has no bearing on the particular version of HMS we consider in the present section. Because of that, we simply drop the translation term and work with the following instead:
\begin{equation}\label{W}
       W = \frac{(1+y_1+\dots+y_n)^{n+1}}{y_1\dots y_n}.
\end{equation}

We associate with $W$ a Fukaya-Seidel $A_{\infty}$-category $\FS((\mathbb{C}^*)^n,W)$ using the Lagrangian thimbles of $W$. We explain how this Fukaya-Seidel category recovers the homogeneous coordinate ring of $X_0$. More precisely, we prove the following result:
\begin{theorem}\label{hypersurface is B}
There is a collection of Lefschetz thimbles $L_i$ in $((\mathbb{C}^*)^n,W)$ such that:
\begin{equation*}
    HW(L_i,L_j) \simeq \hom(\matheu{O}_{X_0}(i),\matheu{O}_{X_0}(j)).
\end{equation*}
Furthermore, the isomorphisms above are compatible with the relevant product structures. 
\end{theorem}
The main insight we use is a base-cover relationship between $((\mathbb{C}^*)^n,W)$ and $((\mathbb{C}^*)^n,W_{\cl})$, together with a folklore result on homological mirror symmetry for projective space $\mathbb{P}^n$. Indeed, recall that the Landau-Ginzburg model associated with projective space is $((\mathbb{C}^*)^n,W_\cl)$, where:
\begin{equation}\label{formula-for-Wcl}
    W_{\cl} = y_1+\dots+y_n+\frac{1}{y_1\dotsi y_n}.
\end{equation}
There is a free action of $\mathbb{Z}_{n+1}$ on $(\mathbb{C}^*)^n$ that rotates the coordinates by $(n+1)^{\text{th}}$-roots of unity:
\begin{equation*}
 \zeta\cdot(y_1,\dots,y_n)\mapsto (\zeta\cdot y_1\dots,\zeta\cdot y_n).   
\end{equation*}
The potential function $W_{\cl}$ is not $\mathbb{Z}_{n+1}$-invariant, but its power $W_{\cl}^{n+1}$ is, and in fact:
\begin{equation}\label{unbranched-covering}
    ((\mathbb{C}^*)^n,W) = ((\mathbb{C}^*)^n/\mathbb{Z}_{n+1},W^{n+1}_{\cl}).
\end{equation}
The quotient map is:
\begin{align}\label{covering map formula}
 \pi:(\mathbb{C}^*)^n &\rightarrow (\mathbb{C}^*)^n\\
 (y_1,\dots,y_n) &\mapsto (y_1Y,\dots,y_nY),\notag
\end{align}
where $Y=y_1\dotsi y_n$. The unbranched covering map $\pi$ seems to mirror the branched covering map $\phi:X_0\rightarrow\mathbb{P}^n$. This mirror correspondence looks like:
\begin{align*}
    \pi^{-1}(-) &\longleftrightarrow \phi_*(-)\\
    \pi(-) &\longleftrightarrow \phi^*(-).
\end{align*}
Our approach to proving Theorem \ref{hypersurface is B} is guided by this correspondence: the methods we use suggest that there exists a commutative diagram of triangulated categories:
\begin{equation*}
    \begin{tikzcd}
    D^{\pi}\FS((\mathbb{C}^*)^n,W) \arrow[r,"\text{HMS}_{X_0}"]\arrow[d,"\psi"]&
    \Perf(X_0)\arrow[d,"\phi"]&\\
    D^{\pi}\FS((\mathbb{C}^*)^n,W_\cl)\arrow[r,"\text{HMS}_{\mathbb{P}^n}"]& D^b\Coh(\mathbb{P}^n), &
    \end{tikzcd}
\end{equation*}
such that both horizontal arrows are equivalences.

\subsection{Partially wrapped Floer theory} We fix a base field $k=\mathbb{C}$.
The Fukaya-Seidel $A_{\infty}$-category associated with the Landau-Ginzburg model $((\mathbb{C}^*)^n,W)$ is constructed by counting holomorphic polygons with boundary on (wrappings of) a collection of Lagrangians. The role of $W$ is to \emph{stop} (in the sense of Z.Sylvan \cite{Sylvan}) the wrapping at a regular fiber of $W$.  When $W$ has only non-degenerate singularities, this is exactly the Fukaya-Seidel category defined for example in \cite{PL-theory}. Because in our case, one of the two singularities of $W$ is non-degenerate, we instead resort to the more recent work of Ganatra-Pardon-Shende in \cite{GPS1} and \cite{GPS2}, although our set-up is actually closer to \cite{Abouzaid2006}. The Liouville structure on $(\mathbb{C}^*)^n$ comes from the 1-form:
\begin{equation*}
    \theta = \sum r_id\theta_i,
\end{equation*}
where $(r_i,\theta_i)$ are the radial and angular components of $i^{\text{th}}$-coordinate $y_i\in\mathbb{C}^*$. It can also be seen as the Stein structure coming from the pluri-subharmonic function:
\begin{equation}\label{psh function}
    h = \abs{y_1} + \dotsi +\abs{y_n}.
\end{equation}
Let $R$ be a fixed, large enough positive number. The objects of $FS((\mathbb{C}^*)^n,W)$ are Lefschetz thimbles $L_{\gamma}$ corresponding to embedded paths $\gamma:[0,1]\rightarrow \mathbb{C}$ such that:
\begin{itemize}
    \item[-] $\abs{\gamma(1)}=R$ but $\gamma(1)\neq -R$.
    \item[-] $\gamma(0)$ is a non-degenerate critical value of $W$.
\end{itemize}
The first condition means that we will stop our wrapped Floer theory at the Weinstein hypersurface $W^{-1}(-R)$. Such $\gamma$ is sometimes called a \emph{vanishing path}.

Because we are only restricting to Lefschetz thimbles, we note that this category (even after taking triangulated split-closures) is a-priori \emph{smaller} than the stopped category $\text{WF}((\mathbb{C}^*)^n, W^{-1}(R))$ in the language of \cite{GPS1}. For example, when $W$ is the Laurent polynomial in (\ref{formula-for-Wcl}), thimbles are enough to recover the full stopped category. However, when $W$ is the Laurent polynomial from (\ref{W_L}), they are not. 

Let $L_1$ and $L_2$ be two objects in $\FS((\mathbb{C}^*)^n,W)$. The holomorphic convexity of $(\mathbb{C}^*)^n$, together with exactness of the Lagrangians $L_i$, ensure that we have the necessary compactness to define a Floer cohomology vector space $HF(L_1,L_2)$ over $k$. However, these vector spaces fails to be independent of Hamiltonian isotopies. Indeed, as $L_1$ is wrapped positively to $L_1^+$ (or $L_2$ wrapped negatively to $L_2^{-}$), the pair $(L_1^+,L_2)$ will likely acquire more intersection points and the vector space $HF(L_1^+,L_2)$ "grows" bigger as a consequence. More accurately, there is a continuation map:
\begin{equation*}\label{continuation}
c: HF(L_1,L_2) \rightarrow  HF(L_1^+,L_2).   
\end{equation*}
One therefore defines (see \cite{GPS1}) a wrapped Floer cohomology group by the following recipe:
\begin{equation}\label{wrapped floer cohomology-1}
    HW(L_1,L_2) = \varinjlim_{w} HF(L^w_1,L_2),
\end{equation}
where the limit is taken over all positive wrappings $L^w_1$ that do not cross the stop $W^{-1}(-R)$. This is now invariant under Hamiltonian isotopies, up to canonical isomorphism.

In the case of a pair $(L_{\gamma_1},L_{\gamma_2})$ of Lefschetz thimbles, this recipe simplifies: we can get positive wrappings of $L_{\gamma_1}$ by instead wrapping the underlying vanishing path $\gamma_1$ around the boundary of the disc of radius $R$. Notice however that once we wrap $\gamma_1$ to a path $\gamma^+_1$ whose end-point $\gamma^+_1(1)$ is closer to the stop $-R$ (in the anti-clockwise direction) than $\gamma_2(1)$, we no longer gain any new intersection points by positively wrapping $\gamma$ even further. As a consequence:
\begin{equation}\label{wrapped floer cohomology-2}
    HW(L_{\gamma_1},L_{\gamma_2}) = HF(L_{\gamma^+_1},L_{\gamma_2}).
\end{equation}
This is basically how stopped Floer cohomology was defined for Fukaya-Seidel categories before Z. Sylvan introduced stops in \cite{Sylvan}. See for example \cite{Abouzaid2006} section 2, or \cite{PL-theory} chapter 3. These vector spaces can be upraded into an $A_{\infty}$-category by counting holomorphic polygons:
\begin{equation*}
   \mu^d: CF(L_{\gamma_{d-1}},L_{\gamma_d})\otimes\dotsi\otimes CF(L_{\gamma_0},L_{\gamma_1})\rightarrow CF(L_{\gamma_0},L_{\gamma_d})[2-d],
\end{equation*}
whenever the sequence of boundary points $\gamma_0(1),\gamma_1(1),\dots,\gamma_d(1)$ is ordered clock-wise in the arc $\{\abs{z}=R\}\backslash \{-R\}$. Finally, because the Lagrangians $L_{\gamma}$ are contractible, they carry canonical spin structures to orient the moduli spaces of holomorphic polygons, and grading data to make $\FS((\mathbb{C}^*)^n,W)$ a $k$-linear, $\mathbb{Z}$-graded $A_{\infty}$-category.

In the previous construction, we may stop the wrapping in Floer cohomology even further by adding more stops of the form $W^{-1}(z)$, where $z$ spans a finite subset $I$ of the circle $\{\abs{z}=R\}$. This means that in equations (\ref{wrapped floer cohomology-1}) and (\ref{wrapped floer cohomology-2}), the positive wrappings stop before running into either one of the fibers in $W^{-1}(I)$. We denote the resulting $A_{\infty}$-category by $\FS((\mathbb{C}^*)^n,W,I)$. For example:
\begin{equation*}
\FS((\mathbb{C}^*)^n,W,-R)   = \FS((\mathbb{C}^*)^n,W). 
\end{equation*}
Given two finite collections of stops $I\subseteq J \subseteq \{\abs{z}=R\}$, the extra wrapping the may occur in $\FS((\mathbb{C}^*)^n,W,I)$, produces continuation elements:
\begin{equation*}
    c_{I\subseteq J}: HW_{J}(L_1,L_2) \rightarrow HW_{I}(L_1,L_2).
\end{equation*}
These continuation elements can in fact be upgraded to an $A_{\infty}$-functor:
\begin{equation*}
    c:\FS((\mathbb{C}^*)^n,W,J)\rightarrow \FS((\mathbb{C}^*)^n,W,I),
\end{equation*}
which is sometimes called \emph{stop-removal}. This functor is carefully constructed in \cite{GPS1} and thoroughly studied in \cite{GPS2}.

\begin{remark}
In our presentation here, we work as though $k$ is a field of characteristic $2$, so as to avoid cluttering the main ideas with notation. In reality, intersection points of Lagrangians should be interpreted as trivializations of orientation operators coming from the Fredholm theory of the $\overline{\partial}$-equation. We refer the reader to \cite{PL-theory}, section 11 for the exact details on how this works.
\end{remark}

%%%%%%%%%%%%%%%%%%%%%%%
%%%%%%%%%%%%%%%%%%%%%%%
%%%%%%%%%%%%%%%%%%%%%%%
\subsection{The A-side, unbranched coverings} We now restrict our discussion of Fukaya categories to the context of the base-cover relationship in (\ref{unbranched-covering}). The potential function $W$ from (\ref{W}) has one non-degenerate critical value at $w_s=(n+1)^{n+1}$, and then a big critical value $w_b=0$. Therefore, the Lefschetz thimbles $L_{\gamma}$ in $\FS((\mathbb{C}^*)^n,W)$ are classified by their monodromy around $0$, which also can be thought of as the intersection number of $\gamma$ with the segment $(-R,0)$.
\begin{definition}\label{thimbles in the base}
For an integer $i\in\mathbb{Z}$, the Lagrangian $L_i\in ((\mathbb{C}^*)^n,W)$ is the Lefschetz thimble associated with an embedded path $\gamma:[0,1]\rightarrow \mathbb{C}\backslash\{0\}$, such that $\gamma(1)=R$,  $\gamma(0)$ is the non-degenerate critical value $w_s=(n+1)^{n+1}$, and the path's clockwise winding number around $0$, relative to the endpoints $w_s$ and $R$, is $i$. 
\end{definition}
\vspace{-1cm}
\hspace{-.8cm}{
\begin{tikzpicture}[x=0.75pt,y=0.75pt,yscale=-.8,xscale=.8]
%uncomment if require: \path (0,311); %set diagram left start at 0, and has height of 311

%Shape: Ellipse [id:dp3597497262279892] 
\draw   (99,138.5) .. controls (99,66.43) and (197.27,8) .. (318.5,8) .. controls (439.73,8) and (538,66.43) .. (538,138.5) .. controls (538,210.57) and (439.73,269) .. (318.5,269) .. controls (197.27,269) and (99,210.57) .. (99,138.5) -- cycle ;
%Shape: Circle [id:dp8254691524978548] 
\draw  [fill={rgb, 255:red, 208; green, 2; blue, 27 }  ,fill opacity=1 ] (95.5,149.5) .. controls (95.5,147.01) and (97.51,145) .. (100,145) .. controls (102.49,145) and (104.5,147.01) .. (104.5,149.5) .. controls (104.5,151.99) and (102.49,154) .. (100,154) .. controls (97.51,154) and (95.5,151.99) .. (95.5,149.5) -- cycle ;
%Shape: Star [id:dp13894803011155266] 
\draw  [fill={rgb, 255:red, 144; green, 19; blue, 254 }  ,fill opacity=1 ] (239.5,145) -- (241.7,147.98) -- (246.63,148.45) -- (243.07,150.77) -- (243.91,154.05) -- (239.5,152.5) -- (235.09,154.05) -- (235.93,150.77) -- (232.37,148.45) -- (237.3,147.98) -- cycle ;
%Shape: Triangle [id:dp8432705553227395] 
\draw  [fill={rgb, 255:red, 126; green, 211; blue, 33 }  ,fill opacity=1 ] (365.5,146) -- (372,155) -- (359,155) -- cycle ;
%Straight Lines [id:da9401914080948472] 
\draw    (365.5,150.5) -- (539,149.5) ;
%Curve Lines [id:da40014988033883014] 
\draw    (365.5,150.5) .. controls (18,-70) and (117,433) .. (539,149.5) ;

% Text Node
\draw (434,127) node [anchor=north west][inner sep=0.75pt]   [align=left] {$L_0$};
% Text Node
\draw (385,225) node [anchor=north west][inner sep=0.75pt]   [align=left] {$L_1$};
% Text Node
\draw (229,155) node [anchor=north west][inner sep=0.75pt]   [align=left] {$w_b$};
% Text Node
\draw (353,160) node [anchor=north west][inner sep=0.75pt]   [align=left] {$w_s$};
% Text Node
\draw (60,140) node [anchor=north west][inner sep=0.75pt]   [align=left] {$-R$};
% Text Node
\draw (215,283) node [anchor=north west][inner sep=0.75pt]   [align=left] {Some Lefschetz thimbles for $W$.};
\end{tikzpicture}
}\vspace{-2.5cm}
\\
The unbranched covering map $\pi$ from (\ref{covering map formula}) induces an $A_{\infty}$-functor:
\begin{equation*}
    \pi:\FS((\mathbb{C}^*)^n,W,-R))\rightarrow \FS((\mathbb{C}^*)^n,W_\cl,J)),
\end{equation*}
where the collection $J$ of stops is:
\begin{equation*}
   J = \{z\in\mathbb{C}\ |\  z^{n+1} = -R\}.
\end{equation*}
At the level of objects, this functor maps a Lagrangian thimble to its pre-image. At the level of hom spaces, the chain map:
\begin{equation}\label{pre-image-before-continuation}
    \pi^1:CW(L_i,L_j) \rightarrow CW_J(\pi^{-1}L_i,\pi^{-1}L_j),
\end{equation}
takes an intersection point $p\in L_i\cap L_j$ to the sum of its pre-images. As an $A_{\infty}$-functor, the higher components all vanish, i.e $\pi^d=0$ for all $d\geq 2$. The reason that $\pi^1$ above is a chain map (and in fact respects the $A_{\infty}$-structures) is because the pre-images $\pi^{-1}(L_i)$ have $n+1$ connected components lying in different sheets of the covering map, one for each critical value of $W_{\cl}$. By the homotopy lifting property, a holomorphic strip with boundary on $(L_0,L_1)$ has exactly $n+1$-lifts via $\pi$, which again lie each in a different sheet of the covering map.

\begin{remark}
A few observations regarding the previous definition are in order:
\begin{itemize}
    \item[-] For the picture above to work perfectly, we need to choose the pluri-subharmonic function on the bottom $(\mathbb{C}^*)^n$ to be the descent of $h$ (as in (\ref{psh function})) through the covering map. 
    \item[-] In the map (\ref{pre-image-before-continuation}), the point $p$ should be replaced by its orientation line $o(p)$. The pre-images $\pi^{-1}(L_i)$  and $\pi^{-1}(L_j)$ inherit their brane structures from those of $L_i$ and $L_j$. Because $\pi$ is unbranched, for each intersection point $q\in \pi^{-1}(L_i)\cap\pi^{-1}(L_j)$, there is a canonical isomorphism of orientation lines $o(q)\simeq o(\pi(q))$, this is what should be used to define $\pi^1$.
\end{itemize}
\end{remark}
Next, we push our Lagrangian thimbles to $((\mathbb{C}^*)^n,W_{\cl})$ using the acceleration functor:
\begin{equation*}
c:\FS((\mathbb{C}^*)^n,W_{\cl},J)\rightarrow \FS((\mathbb{C}^*)^n,W_{\cl},s_1),
\end{equation*}
where the stop $s_1$ is the one located immediately after $\sqrt[n+1]{R}$ in the counter-clockwise direction:
\begin{equation*}
    s_1 = R^{\frac{1}{n+1}}e^{\frac{\pi i}{n+1}}.
\end{equation*}
Finally we define the $A_{\infty}$-functor $\psi$ as the composition of $\pi$ and $c$:
\begin{equation}\label{psi functor}
    \psi: \FS((\mathbb{C}^*)^n,W))\rightarrow \FS((\mathbb{C}^*)^n,W_\cl,s_1).
\end{equation}
We refer the reader to the figure below for some intuition. It turns out that the functor $\psi$ mirrors the pushforward map $\phi_*$ on perfect complexes.

The Landau-Ginzburg model $((\mathbb{C}^*)^n,W_\cl)$ has been extensively studied in the literature as the mirror to projective space. P.Seidel studied the case $n=2$ in \cite{Seidel-vanishing-cycles-and-mutations}, section 3. M. Abouzaid then proved HMS for all smooth toric Fano varieties in \cite{Abouzaid-toric-HMS}, and a quick summary of that story in the case of $\mathbb{P}^n$ can be found in D.Auroux's speculations \cite{Auroux-speculations}, section 7. We will rely on the more recent treatment of Futaki-Ueda in \cite{Futaki-Ueda}. We now briefly recall the elements of that story that are most pertinent to our work.

Following the set-up of the previous discussion, we consider Lagrangian thimbles $\hat{L}_{\gamma}$ whose underlying vanishing path is an embedding:
\begin{equation*}
\gamma:[0,1]\rightarrow \{n+1\leq \abs{z}\leq R^{\frac{1}{n+1}}\},   
\end{equation*}
satisfying the following properties:
\begin{itemize}
    \item[-] $\abs{\gamma(1)} = R$ and $\gamma(1)\neq s_1$.
    \item[-] $\gamma(0)$ is one of the $n+1$ critical values of $W_{\cl}$.
\end{itemize}

These vanishing paths depend on $2$ pieces of data. The first is the choice of a critical value:
\begin{equation*}
    \gamma(0)=w\in \{n+1,(n+1)\zeta,\dots, (n+1)\zeta^n\}.
\end{equation*}
After $\gamma(0)=w$ has been fixed, $\gamma$ only depends on the amount of winding it does with respect to the stop. To quantify this amount, we fix $\gamma_{w,0}$ to be the radial path from $w$ to the circle $\{\abs{z}=R^{\frac{1}{n+1}}\}$. Then $\gamma_{w,i}$ will be obtained from $\gamma_{w,0}$ by further winding  the endpoint $\gamma_{w,0}(1)$ in the clockwise direction until it crosses the stop $s_1$, $i$ times.
\begin{definition}
Given a critical value $w=(n+1)\zeta^{-k}$ of $W_{\cl}$ and an integer $i\in\mathbb{Z}$, the Lagrangian $\hat{L}_{k,i}$ is the Lefschetz thimble associated with the path $\gamma_{w,i}$ as described above. See figure below for examples.
\end{definition}

\begin{center}
{\tikzset{every picture/.style={line width=0.75pt}} %set default line width to 0.75pt   
\begin{tikzpicture}[x=0.75pt,y=0.75pt,yscale=-.7,xscale=.7]
%uncomment if require: \path (0,340); %set diagram left start at 0, and has height of 340

%Shape: Ellipse [id:dp4785068525179539] 
\draw   (63,150.5) .. controls (63,75.11) and (182.99,14) .. (331,14) .. controls (479.01,14) and (599,75.11) .. (599,150.5) .. controls (599,225.89) and (479.01,287) .. (331,287) .. controls (182.99,287) and (63,225.89) .. (63,150.5) -- cycle ;
%Straight Lines [id:da9822359370757097] 
\draw    (374.75,150.5) -- (599,150.5) ;
%Straight Lines [id:da8683022350169973] 
\draw    (331,14) -- (331,106.75) ;
%Straight Lines [id:da22840385194914026] 
\draw    (63,150.5) -- (287.25,150.5) ;
%Straight Lines [id:da9187613143574469] 
\draw    (331,194.25) -- (331,287) ;
%Curve Lines [id:da32764580315045655] 
\draw    (599,150.5) .. controls (571.21,163.81) and (546.08,159.95) .. (522.86,148.26) .. controls (447.48,110.34) and (392.16,-10.03) .. (331,106.75) ;
%Curve Lines [id:da2902683068525824] 
\draw    (331,14) .. controls (428.7,69.71) and (174.98,22.42) .. (206.98,84.52) .. controls (214.61,99.34) and (238.49,120.37) .. (287.25,150.5) ;
%Curve Lines [id:da925197482227379] 
\draw    (63,150.5) .. controls (215.15,96.61) and (292.66,290.02) .. (323.02,223.93) .. controls (326.18,217.04) and (328.83,207.34) .. (331,194.25) ;
%Curve Lines [id:da9593849654406064] 
\draw    (331,287) .. controls (265,229) and (507,192) .. (374.75,150.5) ;
%Shape: Circle [id:dp44058755688528684] 
\draw  [fill={rgb, 255:red, 247; green, 22; blue, 22 }  ,fill opacity=1 ] (494,45) .. controls (494,41.69) and (496.69,39) .. (500,39) .. controls (503.31,39) and (506,41.69) .. (506,45) .. controls (506,48.31) and (503.31,51) .. (500,51) .. controls (496.69,51) and (494,48.31) .. (494,45) -- cycle ;
%Straight Lines [id:da8576420806376255] 
\draw    (89,46) -- (195.05,70.55) ;
\draw [shift={(197,71)}, rotate = 193.03] [color={rgb, 255:red, 0; green, 0; blue, 0 }  ][line width=0.75]    (10.93,-3.29) .. controls (6.95,-1.4) and (3.31,-0.3) .. (0,0) .. controls (3.31,0.3) and (6.95,1.4) .. (10.93,3.29)   ;
%Straight Lines [id:da8303109098595316] 
\draw    (82,62) -- (121.05,134.24) ;
\draw [shift={(122,136)}, rotate = 241.61] [color={rgb, 255:red, 0; green, 0; blue, 0 }  ][line width=0.75]    (10.93,-3.29) .. controls (6.95,-1.4) and (3.31,-0.3) .. (0,0) .. controls (3.31,0.3) and (6.95,1.4) .. (10.93,3.29)   ;
%Straight Lines [id:da7239342409064031] 
\draw    (87,55) -- (398.16,189.21) ;
\draw [shift={(400,190)}, rotate = 203.32999999999998] [color={rgb, 255:red, 0; green, 0; blue, 0 }  ][line width=0.75]    (10.93,-3.29) .. controls (6.95,-1.4) and (3.31,-0.3) .. (0,0) .. controls (3.31,0.3) and (6.95,1.4) .. (10.93,3.29)   ;
%Straight Lines [id:da12415662113863823] 
\draw    (91,38) -- (431.02,85.72) ;
\draw [shift={(433,86)}, rotate = 187.99] [color={rgb, 255:red, 0; green, 0; blue, 0 }  ][line width=0.75]    (10.93,-3.29) .. controls (6.95,-1.4) and (3.31,-0.3) .. (0,0) .. controls (3.31,0.3) and (6.95,1.4) .. (10.93,3.29)   ;

% Text Node
\draw (502,26.4) node [anchor=north west][inner sep=0.75pt]    {$s_{1}$};
% Text Node
\draw (312,114.4) node [anchor=north west][inner sep=0.75pt]    {$\zeta $};
% Text Node
\draw (276,159.4) node [anchor=north west][inner sep=0.75pt]    {$\zeta ^{2}$};
% Text Node
\draw (335,195.4) node [anchor=north west][inner sep=0.75pt]    {$\zeta ^{3}$};
% Text Node
\draw (450,155.4) node [anchor=north west][inner sep=0.75pt]    {$\hat{L}_{0,0}$};
% Text Node
\draw (329,255.4) node [anchor=north west][inner sep=0.75pt]    {$\hat{L}_{1,0}$};
% Text Node
\draw (105,158.4) node [anchor=north west][inner sep=0.75pt]    {$\hat{L}_{2,0}$};
% Text Node
\draw (290,78.4) node [anchor=north west][inner sep=0.75pt]    {$\hat{L}_{3,0}$};
% Text Node
\draw (22,35.4) node [anchor=north west][inner sep=0.75pt]    {$\psi ( L_{1})$};
% Text Node
\draw (90,305) node [anchor=north west][inner sep=0.75pt]   [align=left] {The action of $\displaystyle \psi $ on $\displaystyle L_{0}$ and $\displaystyle L_{1}$. This figure is for $\displaystyle W_{\text{cl}} .$};

\draw   (327.15,283.15) -- (334.85,290.85)(334.85,283.15) -- (327.15,290.85) ;
\draw   (327.15,190.4) -- (334.85,198.1)(334.85,190.4) -- (327.15,198.1) ;
\draw   (327.15,240.72) -- (334.85,248.41)(334.85,240.72) -- (327.15,248.41) ;
\draw   (595.15,146.65) -- (602.85,154.35)(602.85,146.65) -- (595.15,154.35) ;
\draw   (595.15,146.65) -- (602.85,154.35)(602.85,146.65) -- (595.15,154.35) ;
\draw   (523.7,146.65) -- (531.39,154.35)(531.39,146.65) -- (523.7,154.35) ;
\draw   (370.9,146.65) -- (378.6,154.35)(378.6,146.65) -- (370.9,154.35) ;
\draw   (327.15,10.15) -- (334.85,17.85)(334.85,10.15) -- (327.15,17.85) ;
\draw   (327.15,102.9) -- (334.85,110.6)(334.85,102.9) -- (327.15,110.6) ;
\draw   (327.15,10.15) -- (334.85,17.85)(334.85,10.15) -- (327.15,17.85) ;
\draw   (327.15,40.01) -- (334.85,47.7)(334.85,40.01) -- (327.15,47.7) ;
\draw   (59.15,146.65) -- (66.85,154.35)(66.85,146.65) -- (59.15,154.35) ;
\draw   (283.4,146.65) -- (291.1,154.35)(291.1,146.65) -- (283.4,154.35) ;
\draw   (59.15,146.65) -- (66.85,154.35)(66.85,146.65) -- (59.15,154.35) ;
\draw   (165.37,146.65) -- (173.06,154.35)(173.06,146.65) -- (165.37,154.35) ;
\draw   (327.15,10.15) -- (334.85,17.85)(334.85,10.15) -- (327.15,17.85) ;
\draw   (59.15,146.65) -- (66.85,154.35)(66.85,146.65) -- (59.15,154.35) ;
\draw   (59.15,146.65) -- (66.85,154.35)(66.85,146.65) -- (59.15,154.35) ;
\end{tikzpicture}}
\end{center}

We now state a folklore result in homological mirror symmetry. It will facilitate the comparison between the A-side calculations we do next, with their B-side counterparts. We provide a more detailed discussion of this equivalence in section \ref{HMS-for-cpn}.
\begin{theorem}(see \cite{Futaki-Ueda}, \cite{Abouzaid-toric-HMS})\label{HMS for projective space}
There is an $A_{\infty}$-functor:
\begin{equation*}
    \theta: \FS((\mathbb{C}^*)^n,W_\cl)\rightarrow \Coh_{\dg}(\mathbb{P}^n),
\end{equation*}
that induces a quasi-equivalence of split-closed triangulated categories:
\begin{equation}\label{theta-functor}
\theta: D^{\pi}\FS((\mathbb{C}^*)^n,W_\cl)\rightarrow D^b(\Coh(\mathbb{P}^n)).    
\end{equation}
At the level of objects, this functor maps $\hat{L}_{k,i}$ to $\matheu{O}_{\mathbb{P}^n}(-k+i(n+1))$. 
\end{theorem}

We now go back to the $A_{\infty}$-functor $\psi$ defined in (\ref{psi functor}). We start by computing its action on objects. 
\begin{lemma}
Let $j\in\mathbb{Z}$ be an integer given in the form $j=q(n+1)+r$ with $0\leq r\leq n$, and let $L_j$ be the exact Lagrangian from Definition \ref{thimbles in the base}. Then:
\begin{equation}\label{direct sum decomp of pre-image}
    \psi(L_j) = \bigoplus_{k=0}^n L_{k,j_k},
\end{equation}
where:
\begin{equation*}
    j_k=
    \begin{cases} 
      q & \ \text{if}\ 0\leq k\leq n-r, \\
      q+1 &\ \text{if}\  k>n-r.
   \end{cases}
\end{equation*}
\end{lemma}

\proof We assume $j\geq 0$ in order to simplify the phrasing of the argument. The Lagrangians $\hat{L}_{k,j_k}$ are the connected components of $\psi(L_j)$, so the direct sum decomposition is automatic. The only work that needs be done is in identifying the winding numbers $j_k$. In the base $((\mathbb{C}^*)^n,W)$, the wrapping $L_0\rightsquigarrow L_j$ follows the angles $\exp(-2\pi i t)$, with $0\leq t \leq j$. When this wrapping is lifted to $\hat{L}_{k,0}\rightsquigarrow \hat{L}_{k,j_k}$, it follows the angles:
\begin{equation*}
    \theta_t = \exp\left(\frac{2\pi i}{n+1}(n+1-k-t)\right).
\end{equation*}
The integer $j_k$ is now simply the number of times this path of angles crosses the stop $s_1=\exp\left(\frac{\pi i}{n+1}\right)$. This is the same as counting the number of elements in the set:
\begin{equation*}
    \left\{ t\in [0,j] \ |\ t+k+\frac{1}{2} \equiv 0 \mod (n+1)\mathbb{Z}\right\}.
\end{equation*}
Using the Euclidean division $j=q(n+1)+r$, we see that this number is $q$, plus however many multiples of $n+1$ are in the interval:
\begin{equation*}
    \left[k+\frac{1}{2},k+r+\frac{1}{2}\right].
\end{equation*}
Because $k,r<n$, this interval either contains $1$ such multiple (if $k+r>n$) or none at all (if $k+r\leq n$). The formula for $j_k$ then follows. \qed
\begin{remark}\label{pre-image is push forward}
In light of the homological mirror symmetry statement in Theorem \ref{HMS for projective space}, it is worth noting that the numbers $j_k$ in the previous Lemma work out perfectly so that:
\begin{equation*}
    \theta(\psi(L_j)) = \bigoplus_{k=0}^n \matheu{O}_{\mathbb{P}^n}(j-k).
\end{equation*}
\end{remark}
In the direct sum decomposition (\ref{direct sum decomp of pre-image}) above, the direct summand with index $k_+ = n+1-r$ is "more positive" than all the others. The next lemma makes this idea more precise.

\begin{lemma}\label{A side isomorphism}
In the context of the previous lemma, let $p\in \mathbb{Z}$ be another integer. Then the composition:
\begin{equation*}
    HW(L_j,L_p)\rightarrow HW(\psi(L_j),\psi(L_p))\rightarrow HW(\hat{L}_{k_+,j_{k_+}},\psi(L_p)).
\end{equation*}
is an isomorphism.
\end{lemma}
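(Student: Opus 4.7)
The plan is to establish the isomorphism at the chain level by exploiting the covering structure of $\pi$ and carefully calibrating wrappings on the two sides. First, I would wrap $L_j$ positively to $L_j^{+}$ in $\FS((\mathbb{C}^*)^n,W)$, i.e.~just short of the stop $-R$, so that $HW(L_j,L_p)$ is represented by the intersections of $L_j^{+}\cap L_p$. The preimage $\pi^{-1}(L_j^{+})$ in the cover decomposes into $n+1$ components, one for each critical value of $W_{\cl}$, and after acceleration to $\FS((\mathbb{C}^*)^n,W_{\cl},s_1)$ these components become isotopic to the maximal wrappings $\hat{L}_{k,j_k}^{+_{s_1}}$ just short of the single stop $s_1$. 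The index $k_+=n+1-r$ is selected precisely so that the lift of $L_j^{+}$ starting at the critical value $(n+1)\zeta^{-k_+}$ has angular sweep matching the maximal wrapping of $\hat{L}_{k_+,j_{k_+}}$ in $\FS_{s_1}$; this is verified by a direct angular computation using the fact that $\pi$ acts as $z\mapsto z^{n+1}$ on the $W$-plane, dividing angular sweeps by $n+1$.

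Once the wrapping is calibrated, the composition becomes a bijection on generators. Each intersection $x\in L_j^{+}\cap L_p$ has exactly $n+1$ preimages under $\pi$, forming a single $\mathbb{Z}_{n+1}$-orbit under the deck action. This action shifts both component indices $(k,k')$ by the same amount, so the sheet invariant $k-k'\pmod{n+1}$ is constant on each orbit. Projecting onto the $k=k_+$ summand of $\psi(L_j)$ therefore selects exactly one preimage $\tilde x$ of each $x$, paired with the unique $\hat{L}_{k',p_{k'}}$ where $k'\equiv k_+-d(x)\pmod{n+1}$ is determined by the sheet invariant $d(x)$ of $x$. Conversely, every intersection of $\hat{L}_{k_+,j_{k_+}}^{+_{s_1}}$ with $\psi(L_p)$ projects via $\pi$ to a well-defined point of $L_j^{+}\cap L_p$, so the map is bijective.

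Finally, since $\pi$ is étale, the Floer differentials commute with this bijection: holomorphic strips in the base lift uniquely to the cover given prescribed boundary endpoints, and strips in the cover project to strips in the base. Hence the chain-level bijection intertwines differentials, yielding a chain isomorphism and so the claimed isomorphism on cohomology. The main technical obstacle is the wrapping calibration of the first step: one must confirm that the maximal wrapping of the $k_+$-summand in $\FS_{s_1}$ captures exactly the generators coming from the lifted wrapping of $L_j^{+}$ and does not introduce spurious intersection points with $\psi(L_p)$ from the portion of the wrapping beyond what is accounted for by the base; this is the part of the argument that singles out $k_+$ as opposed to any other component of $\psi(L_j)$.
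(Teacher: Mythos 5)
Your route is essentially the paper's: use the covering $\pi$ to match generators (and, by unique lifting of strips, differentials) of $CF(L_j,L_p)$ with those of a single component of $\pi^{-1}(L_j)$ against $\pi^{-1}(L_p)$, and then argue that removing all stops except $s_1$ does not change this particular hom space. However, as written the argument has a genuine gap at exactly the decisive point. First, your opening claim that \emph{all} components of $\pi^{-1}(L_j^{+})$ become isotopic, after acceleration, to maximal wrappings just short of $s_1$ is false for $k\neq k_+$: those components end just short of other points of $J$, which are no longer stops, so they do acquire further wrapping in $\FS((\mathbb{C}^*)^n,W_{\cl},s_1)$ --- this is precisely why only the $k_+$ summand computes $HW(L_j,L_p)$, and it contradicts your own closing remark. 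Second, you explicitly defer the crux ("one must confirm ... no spurious intersection points") rather than prove it; since that confirmation is the entire content of why $k_+$ works, the proposal does not yet establish the lemma.

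The missing step is a short angle computation, which is how the paper closes the argument. The boundary endpoint of $\hat{L}_{k,j_k}$ sits at angle $-\tfrac{2\pi}{n+1}(k+j)$; for $k=k_+=n+1-r$ and $j=q(n+1)+r$ this is $\equiv 0 \bmod 2\pi$. The endpoints of the components $\hat{L}_{k',p_{k'}}$ of $\psi(L_p)$ also sit at multiples of $\tfrac{2\pi}{n+1}$, while the stop $s_1$ sits at angle $\tfrac{\pi}{n+1}$, strictly between consecutive multiples. Hence the endpoint of $\hat{L}_{k_+,j_{k_+}}$ is already at least as close to $s_1$ as the endpoint of any component of $\psi(L_p)$ can be, so the pair $(\hat{L}_{k_+,j_{k_+}},\psi(L_p))$ is sufficiently wrapped in the $s_1$-stopped category: the extra wrapping permitted after dropping the other stops sweeps no endpoint of $\psi(L_p)$ and creates no new intersection points. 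Combined with the covering bijection (which you set up correctly, including the lifting and projection of holomorphic strips, and which gives the isomorphism in the fully $J$-stopped category for every $k$), this yields the claimed isomorphism; inserting this computation would complete your argument and bring it in line with the paper's proof.
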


\proof Observe that the composition: 
\begin{equation}\label{iso with stops}
    HW(L_j,L_p)\rightarrow HW(\pi^{-1}(L_j),\pi^{-1}(L_p))\rightarrow HW(\hat{L}_{k,j_{k}},\pi^{-1}(L_p)),
\end{equation}
is an isomorphism for all $k=0,\dots,n$, because the intersection points in $CF(L_j,L_p)$ are in 1-to-1 correspondence with those of $CF(\hat{L}_{k,j_{k}},\pi^{-1}(L_p))$, and the pair $(\hat{L}_{k,j_{k}},\pi^{-1}(L_p))$ acquires no further wrapping in the category $\FS((\mathbb{C}^*)^n,W_\cl,J))$. When we remove all the stops but $s_1$, many of the pairs $(\hat{L}_{k,j_{k}},\pi^{-1}(L_p))$ will acquire more wrapping. This phenomenon can be studied by examining the angle where $\hat{L}_{k,j_k}$ hits the boundary. This angle is:
\begin{equation*}
    \frac{-2\pi}{n+1}(k+j).
\end{equation*}
Recall that the stop $s_1$ sits at an angle of $\pi/(n+1)$. In particular, when $k=k_+$, the boundary of $\hat{L}_{k,j_k}$ is as close to the stop as any $\hat{L}_{k,p_k}$ can be. In particular, the pair $(\hat{L}_{k_+,j_{k_+}},\psi(L_p))$ is sufficiently wrapped, and the Lemma now follows from (\ref{iso with stops}). \qed

\begin{remark}
In light of the homological mirror symmetry statement in Theorem \ref{HMS for projective space}, the previous Lemma mirrors the adjunction isomorphism:
\begin{equation*}
 \hom_{X_0}(\matheu{O}_{X_0}(i),\matheu{O}_{X_0}(j))\rightarrow  \hom(\matheu{O}_{\mathbb{P}^n}(i),  \phi_*\matheu{O}_{X_0}(j))  .
\end{equation*}
\end{remark}
The previous Lemma computes $ HW(L_j,L_p)$ as a quotient (as opposed to a subspace) of $HW(\psi(L_j),\psi(L_p))$. While that is enough the compute these wrapped Floer cohomologies as vector spaces, it unfortunately loses most of the information in the product structure. In order to compute the embedding: 
\begin{equation*}
    HW(L_j,L_p)\rightarrow HW(\psi(L_j),\psi(L_p)),
\end{equation*}
 we will need to appeal to an extra grading datum that comes from topological aspects of Fukaya-Seidel categories.
%%%%%%%%%%%%%%%%%%%%%%%
%%%%%%%%%%%%%%%%%%%%%%%
%%%%%%%%%%%%%%%%%%%%%%%
\subsection{HMS for projective space, review} \label{HMS-for-cpn}
In this section, all vector spaces are defined over a fixed base field $k$. We review some of the literature pertaining to homological mirror symmetry for projective space $\mathbb{P}^n$. It was studied by Paul Seidel (when $n=2$ in \cite{vanishing-cycles-and-mutations-2}), Abouzaid in \cite{Abouzaid-toric-HMS}, and more recently by Futaki-Ueda in \cite{Futaki-Ueda}. The folklore result discussed in all these references is an equivalence of triangulated categories:
\begin{equation}\label{theta}
    \theta: D^{\pi}\FS((\mathbb{C}^*)^n,W_{\cl}) \rightarrow D^b\Coh(\mathbb{P}^n).
\end{equation}

Because $\mathbb{P}^n$ is Fano, the equivalence above can be fixed (for example) by setting $\theta(\hat{L}_{0,0})=\matheu{O}_{\mathbb{P}^n}$, and then choosing homogeneous coordinates on $\mathbb{P}^n$. Note that $\hat{L}_{0,0}$ is a cotangent fiber of $(\mathbb{C}^*)^n$. This uniqueness of choice in $\theta$ sets some expectations on how the functor $\theta$ should behave, and this section is devoted to establishing some of them. In particular, we provide a more or less topological description of $D^{\pi}FS((\mathbb{C}^*)^n,W_{\cl})$.

\subsubsection{Algebraic computations} In \cite{Futaki-Ueda}, Futaki and Ueda consider a collection of graded Lagrangian thimbles $C_0,C_1,\dots, C_n$ in $FS((\mathbb{C}^*)^n,W_{\cl})$ that we can best describe with the following figure:

\begin{center}

\begin{tikzpicture}[x=0.75pt,y=0.75pt,yscale=-.7,xscale=.7]
%uncomment if require: \path (0,424); %set diagram left start at 0, and has height of 424

%Shape: Ellipse [id:dp6979688130063926] 
\draw   (100,193) .. controls (100,109.05) and (206.56,41) .. (338,41) .. controls (469.44,41) and (576,109.05) .. (576,193) .. controls (576,276.95) and (469.44,345) .. (338,345) .. controls (206.56,345) and (100,276.95) .. (100,193) -- cycle ;
%Shape: Circle [id:dp5571651824394965] 
\draw  [fill={rgb, 255:red, 144; green, 19; blue, 254 }  ,fill opacity=1 ] (302,133) .. controls (302,130.24) and (304.24,128) .. (307,128) .. controls (309.76,128) and (312,130.24) .. (312,133) .. controls (312,135.76) and (309.76,138) .. (307,138) .. controls (304.24,138) and (302,135.76) .. (302,133) -- cycle ;
%Shape: Circle [id:dp5239955936497709] 
\draw  [fill={rgb, 255:red, 144; green, 19; blue, 254 }  ,fill opacity=1 ] (369,136) .. controls (369,133.24) and (371.24,131) .. (374,131) .. controls (376.76,131) and (379,133.24) .. (379,136) .. controls (379,138.76) and (376.76,141) .. (374,141) .. controls (371.24,141) and (369,138.76) .. (369,136) -- cycle ;
%Shape: Circle [id:dp37444568380158394] 
\draw  [fill={rgb, 255:red, 144; green, 19; blue, 254 }  ,fill opacity=1 ] (401,188) .. controls (401,185.24) and (403.24,183) .. (406,183) .. controls (408.76,183) and (411,185.24) .. (411,188) .. controls (411,190.76) and (408.76,193) .. (406,193) .. controls (403.24,193) and (401,190.76) .. (401,188) -- cycle ;
%Shape: Circle [id:dp21213820106466152] 
\draw  [fill={rgb, 255:red, 144; green, 19; blue, 254 }  ,fill opacity=1 ] (369,250) .. controls (369,247.24) and (371.24,245) .. (374,245) .. controls (376.76,245) and (379,247.24) .. (379,250) .. controls (379,252.76) and (376.76,255) .. (374,255) .. controls (371.24,255) and (369,252.76) .. (369,250) -- cycle ;
%Shape: Circle [id:dp6175813621561985] 
\draw  [fill={rgb, 255:red, 144; green, 19; blue, 254 }  ,fill opacity=1 ] (301,252) .. controls (301,249.24) and (303.24,247) .. (306,247) .. controls (308.76,247) and (311,249.24) .. (311,252) .. controls (311,254.76) and (308.76,257) .. (306,257) .. controls (303.24,257) and (301,254.76) .. (301,252) -- cycle ;
%Shape: Circle [id:dp006281915021020934] 
\draw  [fill={rgb, 255:red, 144; green, 19; blue, 254 }  ,fill opacity=1 ] (265,188) .. controls (265,185.24) and (267.24,183) .. (270,183) .. controls (272.76,183) and (275,185.24) .. (275,188) .. controls (275,190.76) and (272.76,193) .. (270,193) .. controls (267.24,193) and (265,190.76) .. (265,188) -- cycle ;
%Shape: Circle [id:dp6041084444185654] 
\draw  [fill={rgb, 255:red, 208; green, 2; blue, 27 }  ,fill opacity=1 ] (505,88) .. controls (505,85.24) and (507.24,83) .. (510,83) .. controls (512.76,83) and (515,85.24) .. (515,88) .. controls (515,90.76) and (512.76,93) .. (510,93) .. controls (507.24,93) and (505,90.76) .. (505,88) -- cycle ;
%Straight Lines [id:da3366300034517469] 
\draw    (406,188) -- (575,188) ;
%Curve Lines [id:da38403416721984707] 
\draw    (374,250) .. controls (355,109) and (535,205) .. (575,175) ;
%Curve Lines [id:da5825319854210891] 
\draw    (306,252) .. controls (415,93) and (531,194) .. (571,164) ;
%Curve Lines [id:da4595464405516867] 
\draw    (270,188) .. controls (572,110) and (479,174) .. (564,146) ;
%Curve Lines [id:da8874584229953459] 
\draw    (307,133) .. controls (231,192) and (518,127) .. (556,132) ;
%Curve Lines [id:da7856647851730112] 
\draw    (374,136) .. controls (414,106) and (494,138) .. (534,108) ;

% Text Node
\draw (442,195) node [anchor=north west][inner sep=0.75pt]   [align=left] {$C_0$};
% Text Node
\draw (373,105) node [anchor=north west][inner sep=0.75pt]   [align=left] {$C_5$};
% Text Node
\draw (265,132) node [anchor=north west][inner sep=0.75pt]   [align=left] {$C_4$};
% Text Node
\draw (277,191) node [anchor=north west][inner sep=0.75pt]   [align=left] {$C_3$};
% Text Node
\draw (318,237) node [anchor=north west][inner sep=0.75pt]   [align=left] {$C_2$};
% Text Node
\draw (379,226) node [anchor=north west][inner sep=0.75pt]   [align=left] {$C_1$};
% Text Node
\draw (203,361) node [anchor=north west][inner sep=0.75pt]   [align=left] {Futaki-Ueda thimbles for $n=5$.};
\end{tikzpicture}
\end{center}

Their main theorem is the following computation:
\begin{theorem}(see \cite{Futaki-Ueda})\label{Futaki-Ueda-isom}
Let $V$ be a vector space in degree $0$ of dimension $n+1$. Then for each pair of Lefschetz thimbles $C_i$ and $C_j$, we have an isomorphism of graded vector spaces:
\begin{equation}\label{Futaki-Ueda theorem}
    HW(C_i,C_j) \simeq \bigwedge^{j-i}\left(V[-1]\right).
\end{equation}
Furthermore, these isomorphisms match the triangle product in the Fukaya-Seidel category with the wedge product. The higher $A_{\infty}$-operations all vanish.
\end{theorem}
On the B-side of things, this collections mirrors (a twist of) Beilinson's dual collection, which classically is the full exceptional collection:
\begin{equation*}
    \matheu{C}(-1) = \langle \Omega^n_{\mathbb{P}^n}(n)[n],\Omega^{n-1}_{\mathbb{P}^n}(n-1)[n-1],\dots,\Omega^1_{\mathbb{P}^n}(1)[1],\matheu{O}_{\mathbb{P}^n}\rangle.
\end{equation*}
Because of choices we made on the $A$-side, we twist this collection by $\matheu{O}_{\mathbb{P}^n}(1)$, the resulting collection will then be denoted $\matheu{C}$:
\begin{equation*}
    \matheu{C} =\langle \Omega^n_{\mathbb{P}^n}(n+1)[n],\Omega^{n-1}_{\mathbb{P}^n}(n)[n-1],\dots,\Omega^1_{\mathbb{P}^n}(2)[1],\matheu{O}_{\mathbb{P}^n}(1)\rangle.
\end{equation*}
The $A_{\infty}$-equivalence between the full exceptional collections $\matheu{C}$ and the Lefschetz thimbles
$\langle C_0,\dots,C_n \rangle$, induces an equivalence of triangulated categories as in (\ref{theta}).

The relationship between the collection $\matheu{C}$ and the collection of thimbles $\hat{L}_{k,0}$ we introduced earlier, is Koszul duality.
\begin{lemma}(see \cite{PL-theory}, sections 18k,18l)
In the $A_{\infty}$-category $\FS((\mathbb{C}^*)^n,W_{\cl})$, the collection $\langle \hat{L}_{n,0},\dots ,\hat{L}_{0,0} \rangle$ is the Koszul dual collection to $\langle C_0,\dots,C_n \rangle$. 
\end{lemma}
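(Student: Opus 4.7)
The statement is the symplectic incarnation of the classical BGG Koszul duality on $\mathbb{P}^n$ between Beilinson's collection $\langle \matheu{O}(-n),\dots,\matheu{O}\rangle$ and its dual collection $\langle \Omega^n(n)[n],\dots,\Omega(1)[1],\matheu{O}\rangle$. The plan is to verify Koszul duality intrinsically in the Fukaya--Seidel category, following the framework of \cite{PL-theory}, sections 18k--18l, which characterizes Koszul-dual collections in an $A_{\infty}$-category by a one-dimensional, degree-concentrated hom pairing matching the Koszul pairing of their endomorphism algebras.

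I would first identify both collections as full strong exceptional collections in $\FS((\mathbb{C}^*)^n,W_\cl)$. The collection $\langle \hat{L}_{n,0},\dots, \hat{L}_{0,0}\rangle$ consists of radial thimbles from the $n+1$ critical values of $W_\cl$, and under the equivalence $\theta$ of Theorem \ref{HMS for projective space} it corresponds to the (twisted) Beilinson collection $\langle \matheu{O}(-n),\dots,\matheu{O}(0)\rangle$; in particular its endomorphism algebra is the homogeneous coordinate ring $\bigoplus_k \mathrm{Sym}^k V^*$. On the other hand, Futaki--Ueda's theorem \eqref{Futaki-Ueda theorem} identifies the endomorphism algebra of $\langle C_0,\dots,C_n\rangle$ with the exterior algebra $\bigwedge^\bullet(V[-1])$. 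These two algebras are classically Koszul dual, so what remains is to promote the duality from the level of algebras to the level of the $A_\infty$-enhancement inside $\FS((\mathbb{C}^*)^n,W_\cl)$.

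The concrete computation is to evaluate the cross-pairings $HW^\bullet(\hat{L}_{k,0}, C_j)$ by drawing both sets of vanishing paths in a common picture and wrapping the Futaki--Ueda thimbles so that their intersections with the radial thimbles are transverse. Near each critical value $(n+1)\zeta^{-k}$ the relevant local model is the standard Lefschetz singularity, and the count of intersection points reduces to a topological computation of how many times each spiraling path $C_j$ crosses the radial path defining $\hat{L}_{k,0}$. The expected outcome is that $HW^\bullet(\hat{L}_{k,0}, C_j)$ is one-dimensional and concentrated in a single degree when the pair $(k,j)$ is matched as predicted by the BGG pairing, and vanishes otherwise. Once this is established, the vanishing of higher $A_\infty$-operations in the cross-pairing is automatic from one-dimensionality, and Seidel's characterization yields the Koszul duality claim.

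The main obstacle will be the careful accounting of gradings. The Futaki--Ueda thimbles pick up degree shifts from spiraling around the origin, and these shifts must combine with the gradings of the radial thimbles to land exactly in the pattern demanded by the degree shifts $[j]$ in the Beilinson dual collection on the $B$-side. A secondary point is that the proof should be self-contained enough to avoid circularly invoking HMS for $\mathbb{P}^n$: the hom computation in the Fukaya--Seidel category can be done directly from the geometry of the thimbles, so that the Koszul duality statement provides independent input for proving Theorem \ref{hypersurface is B} in the subsequent sections.
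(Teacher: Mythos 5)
The paper offers no argument for this lemma at all: it is imported from Seidel's book \cite{PL-theory}, sections 18k--18l, where dual bases of Lefschetz thimbles and their one-dimensional pairing are established, so any proof you give is necessarily a different route from the paper's (which is a citation). Your strategy is the right one and is essentially a reconstruction of what the cited sections do: exhibit both $\langle \hat{L}_{n,0},\dots,\hat{L}_{0,0}\rangle$ and $\langle C_0,\dots,C_n\rangle$ as full exceptional collections of thimbles and verify the dual-basis criterion, namely that the cross hom $HW^\bullet(\hat{L}_{k,0},C_j)$ is one-dimensional, concentrated in the predicted degree, for the matched index and zero otherwise; once that delta-pattern is in place the Koszul duality statement follows from Seidel's characterization, exactly as you say.

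However, as written the proposal stops short of a proof in two places. First, the decisive step --- the count of intersection points between the spiraling Futaki--Ueda paths and the radial paths near each critical value of $W_{\cl}$, together with the grading bookkeeping that places each generator in the degree demanded by the shifts $[j]$ --- is only announced, with the conclusion labeled as the ``expected outcome''; but this computation \emph{is} the content of the lemma, and you yourself identify the grading accounting as the main obstacle without resolving it. Second, your opening identification of the endomorphism algebra of $\langle \hat{L}_{n,0},\dots,\hat{L}_{0,0}\rangle$ with $\bigoplus_k \mathrm{Sym}^k(V^{\vee})$ via the equivalence $\theta$ of Theorem \ref{HMS for projective space} is circular relative to the paper's own logic: the paper deduces $\theta(\hat{L}_{k,0})=\matheu{O}_{\mathbb{P}^n}(-k)$ and the hom-space formula (\ref{hom-spaces}) \emph{from} this lemma. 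You flag the circularity at the end, and indeed the dual-basis criterion does not need that identification, so the repair is simply to delete the appeal to $\theta$ and let the cross-pairing computation carry the whole argument --- but that computation still has to be carried out (or, as the paper does, attributed to \cite{PL-theory}).
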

Koszul duality is customarily denoted with an upper shriek, for example:
\begin{equation*}
    \hat{L}_{k,0} = C^{!}_{k}.
\end{equation*}
As a consequence, the equivalence $\theta$ from (\ref{theta}) above maps $\hat{L}_{k,0}$ to $\matheu{O}_{\mathbb{P}^n}(-k)$, for each $k=0,\dots,n$. This allows us in particular to compute the hom spaces between them:
\begin{equation}\label{hom-spaces}
    HW(\hat{L}_{i,0},\hat{L}_{j,0}) \simeq \text{Sym}^{j-i}(V^{\vee}),
\end{equation}
whenever $i\leq j$. In order the reach other Lefschetz thimbles of the form $\hat{L}_{k,d}$, the tool we need is Serre duality. On the $B$-side, the triangulated category $D^b\Coh(\mathbb{P}^n)$ has a Serre functor given by:
\begin{equation*}
    S(\matheu{L}) = \matheu{L}(-(n+1))[n].
\end{equation*}
On the $A$-side, the Serre functor takes a thimble $L$ to its image under (counter-clockwise) monodromy near infinity, and then shifts the underlying grading by $n$. Another way to think of this monodromy near infinity is wrapping past the stop. A classical result (see for instance Lemma 1.30 in \cite{Fourier-Mukai}) ensures that any triangulated equivalence has to commute with Serre functors. Therefore, it follows that the functor $\theta$ from (\ref{theta}) satisfies:
\begin{equation*}
    \theta(\hat{L}_{k,i}) = \matheu{O}_{\mathbb{P}^n}(-k+i(n+1)).
\end{equation*}
To simplify notation a bit, we now will denote by $\hat{L}_d$ (for $d\in\mathbb{Z}$) any Lagrangian thimble whose image under $\theta$ is $\matheu{O}_{\mathbb{P}^n}(d)$. By means of Serre duality, we can now compute the hom space between all thimbles $\hat{L}_d$. For example:
\begin{equation*}
    \hom(\hat{L}_0,\hat{L}_{-d}) \simeq \text{Sym}^{d-(n+1)}(V)[n],
\end{equation*}
whenever $d\geq n+1$. We also note that these isomorphisms respect the product structures too.

\subsubsection{Topological computations} We begin with the observation that the $A_{\infty}$-category $\FS((\mathbb{C}^*)^n,W_\cl)$ carries a topological grading by the relative homology group:
\begin{equation}\label{fine-grading}
    \hat{G} = H_1((\mathbb{C}^*)^n, \text{Crit}(W_\cl),\mathbb{Z}),
\end{equation}
where $\text{Crit}(W_\cl)$ is the (finite) collection of critical points of $W_\cl$. This grading associates with each Hamiltonian $y:[0,1]\rightarrow (\mathbb{C}^*)^n$ from a Lefschetz thimble $L$ to another Lefschetz thimble $L'$, an element $\deg_{\hat{G}}(y)\in \hat{G}$ by connecting $y(0)$ to the vanishing point of $L$ (without leaving $L$), and $y(1)$ to the vanishing point of $L'$ (without leaving $L'$) and then taking the homology class of the resulting path in $\hat{G}$. Because of its topological nature, this $\hat{G}$-grading is preserved by all Floer theoretic constructions. This includes continuation maps, TQFT structures, $A_{\infty}$-operations, twists and mutations.

This topological grading however, is a bit too fine for our purposes: For example, in the computation of Futaki-Ueda \ref{Futaki-Ueda-isom}, the vector space $V$ inherits different $\hat{G}$-gradings from the different isomorphisms:
\begin{equation*}
    HW(C_k,C_{k+1}) \simeq V[-1].
\end{equation*}
We can remedy this issue by identifying all $n+1$ critical points of $W_\cl$ in the homology group defining $\hat{G}$ (see (\ref{fine-grading})). We do so by means of the projection map:
\begin{equation*}
    \pi: \hat{G} \rightarrow H_1((\mathbb{C}^*)^n,x_0), 
\end{equation*}
where $x_0$ is the unique non-degenerate critical point of $W$. Observe that the group:
\begin{equation*}
    G = H_1((\mathbb{C}^*)^n,x_0),
\end{equation*}
naturally grades the Fukaya-Seidel category $\FS((\mathbb{C}^*)^n,W)$, and the collapsing map $\pi:\hat{G}\rightarrow G$ makes $\FS((\mathbb{C}^*)^n,W_\cl)$ a $G$-graded $A_{\infty}$-category as well.
\begin{remark}
The group $G$ is isomorphic to $\mathbb{Z}^n$ but we are not fixing an isomorphism yet. The $G$-grading on the $A$-side should be compared with the toric grading on $D^b\Coh(\mathbb{P}^n)$ in the $B$-side (see \cite{Derived-McKay} for instance).
\end{remark}
\begin{lemma}\label{graded-hom-spaces}
There is a $G$-grading on the vector space $V$ so that the isomorphisms in (\ref{Futaki-Ueda theorem}) are all $G$-graded.
\end{lemma}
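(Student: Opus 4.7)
The strategy is to exploit a rotational symmetry of $W_\cl$ to transport a single $G$-grading on $V$ compatibly across all of the nearest-neighbor Floer groups $HW(C_k,C_{k+1})$, and then to extend to the general case by multiplicativity of the $G$-grading under the composition product.

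The key symmetry is the free $\mathbb{Z}/(n+1)$-action $\rho(y_1,\dots,y_n) = (\zeta y_1,\dots,\zeta y_n)$ already appearing in (\ref{covering map formula}). Since $W_\cl \circ \rho = \zeta \cdot W_\cl$, composing $\rho$ with a rotation of the target $\mathbb{C}$ yields an autoequivalence $\rho_*$ of $\FS((\mathbb{C}^*)^n,W_\cl)$ which, after a symmetric labeling of the Futaki-Ueda configuration of thimbles, satisfies $\rho_*(C_k) = C_{k+1}$. Moreover $\rho$ is homotopic to the identity through the family of partial rotations, so its induced action on $H_1((\mathbb{C}^*)^n)$, and hence on $G = H_1((\mathbb{C}^*)^n, x_0)$, is trivial. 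Consequently $\rho_*$ preserves the topological $G$-grading on each $HW(C_i,C_j)$.

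I would then fix a Futaki-Ueda identification $\iota_0 : V[-1] \xrightarrow{\sim} HW(C_0,C_1)$ and transport the canonical topological grading of the right-hand side to $V$ along $\iota_0$. For $1 \le k \le n-1$, I set $\iota_k = \rho_*^k \circ \iota_0 : V[-1] \xrightarrow{\sim} HW(C_k,C_{k+1})$; each $\iota_k$ is automatically $G$-graded by the previous paragraph, handling the case $j-i=1$ of (\ref{Futaki-Ueda theorem}). For the general isomorphism $HW(C_i,C_j) \simeq \bigwedge^{j-i}(V[-1])$ with $j-i \ge 2$, I would use that it is induced, up to antisymmetrization, by iterated $\mu^2$-composition of nearest-neighbor hom spaces; since composition respects the $G$-grading and the grading on $\bigwedge^\bullet V$ is inherited multiplicatively from that on $V$, the general case follows.

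The hard part will be verifying that the $\rho$-transported isomorphisms $\iota_k$ agree with the Futaki-Ueda normalizations up to $G$-graded automorphisms of $V[-1]$. Once one knows that $\rho_*$ acts on objects by the cyclic shift $C_k \mapsto C_{k+1}$, this reduces to a finite check whose worst-case discrepancy is a rescaling of basis vectors by $(n+1)$-st roots of unity, which does not affect the $G$-degrees.
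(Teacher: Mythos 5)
Your route is genuinely different from the paper's. The paper never touches the collection $C_0,\dots,C_n$ until its last step: it first grades $V$ through the Koszul--dual thimbles $\hat{L}_{k,0}$, whose nearest-neighbor groups $HW(\hat{L}_{k+1,0},\hat{L}_{k,0})$ are all canonically identified with $HW(L_0,L_1)$ downstairs via the covering functor, so that independence of $k$ is manifest; it then extends to all pairs $(\hat{L}_d,\hat{L}_{d+1})$ by the Serre functor (wrapping past the stop), to the symmetric-power isomorphisms (\ref{hom-spaces}) by surjectivity of iterated composition, and finally transfers the statement to (\ref{Futaki-Ueda theorem}) by Koszul duality. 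You instead act directly on the Futaki--Ueda collection with the $\mathbb{Z}_{n+1}$-symmetry and propagate from nearest neighbors by surjectivity of the wedge product; this avoids Koszul duality and the $\hat{L}$-collection entirely, at the cost of some bookkeeping with the stop and of the normalization check you flag. Note that the two arguments rest on the same geometry: your $\rho$ generates the deck group of the covering $\pi$ from (\ref{covering map formula}), which is exactly the symmetry the paper exploits through the lifting functor.

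Three points need repair or more care. First, your justification that $\rho_*$ preserves the $G$-grading is not right as stated: the $G$-degree of a chord is a \emph{relative} class (endpoints joined to the critical points inside the thimbles, then pushed to $G$ by the collapsing map of (\ref{fine-grading})), and your homotopy $\rho_t$ moves the critical points; pushed down by $\pi$, each critical point traces the nontrivial loop of class $(1,\dots,1)$, so ``acts trivially on $H_1$'' does not by itself give the claim (it happens to be repairable because both endpoint tracks push to the same loop class). The clean argument is simply that $\rho$ is a deck transformation, $\pi\circ\rho=\pi$, so the pushed-forward relative classes computing $G$-degrees are literally unchanged. Second, rotating the target moves the stop: your autoequivalence is a priori an equivalence from the category stopped at $s_1$ to the one stopped at $\zeta s_1$, so you must arrange the $C_k$ in a rotationally staggered configuration and check that for $0\le k\le n-1$ the image of $C_k$ is isotopic to $C_{k+1}$ without crossing the stop (and accept that $C_n$ is not sent to $C_0$ -- you do not need it). Third, the ``hard part'' you defer is the actual crux and is not dispatched by saying the discrepancy is a root-of-unity rescaling: a priori the comparison between your transported isomorphisms and the Futaki--Ueda ones could permute or mix the weight lines of $V$, in which case no single grading would make all the isomorphisms in (\ref{Futaki-Ueda theorem}) graded; ruling this out requires either inspecting how the Futaki--Ueda identifications pair the canonical chord basis with a basis of $V$, or an argument like the paper's, whose detour through the thimbles lifted from $((\mathbb{C}^*)^n,W)$ is designed precisely to make the $k$-independence of the induced grading automatic.
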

\begin{proof}
This is best seen from the isomorphisms in (\ref{hom-spaces}), because the $G$-grading in $HW(\hat{L}_{k+1,0},\hat{L}_{k,0})$ is inherited from the one in $HW(L_0,L_1)$ via the map $\pi$, independently of $k=0,1,\dots,n-1$. It follows that $V$ has a $G$-grading such that the isomorphisms:
\begin{equation*}
 HW(\hat{L}_{k+1,0},\hat{L}_{k,0})\simeq V,
\end{equation*}
are $G$-graded for $k=0,\dots,n$. Using the Serre functor, we can take any integer $d\in\mathbb{Z}$, and isotope the pair $(L_d,L_{d+1})$ past the stop sufficiently many times in order to get get an isomorphism:
\begin{equation*}
    HW(\hat{L}_d,\hat{L}_{d+1})\simeq HW(\hat{L}_{k+1,0},\hat{L}_{k,0}),
\end{equation*}
for some $k=0,\dots,n$. As a consequence, the isomorphism:
\begin{equation*}
    HW(\hat{L}_d,\hat{L}_{d+1})\simeq V,
\end{equation*}
is $G$-graded for all $d\in\mathbb{Z}$. Next, whenever $i<j$, we have a $G$-graded surjective map:
\begin{equation*}
    HW(\hat{L}_{j-1},\hat{L}_j)\otimes \dots \otimes HW(\hat{L}_i,\hat{L}_{i+1}) \rightarrow HW(\hat{L}_i,\hat{L}_j),
\end{equation*}
given by iterated composition (not to be confused with the $A_{\infty}$-structure maps). Because this map is surjective, one deduces that the isomorphisms in (\ref{hom-spaces}) all respect the $G$-grading. Now the lemma follows from an application of Koszul duality to the collection $(\hat{L}_{n,0},\dots,\hat{L}_{0,0})$.
\end{proof}

We now consider the \emph{weight} decomposition of $V$ with respect to $G$:
\begin{equation}\label{weight-decomposition}
    V = \ell_{g_0}\oplus \ell_{g_1}\oplus\dotsi\oplus \ell_{g_n},
\end{equation}
where $g_0,\dots,g_n$ are elements of $G$, and $\ell_{g}$ denotes a one dimensional vector space where all non-zero elements have degree $g$. We will see later that in this decomposition, all $g_k$ are distinct, but we do not assume that for now.

\begin{lemma}
In the group $G$, we have the following relation:
\begin{equation*}
    g_0+g_1+\dots+g_n = 0.
\end{equation*}
\end{lemma}
\begin{proof}
This Lemma is purely topological, but we exploit known Floer theoretic calculations to prove it. From the Koszul duality isomorphism in Lemma 5.15, (ii) of \cite{PL-theory}, we have a $G$-graded isomorphism:
\begin{equation*}
    \hom(C_0,\hat{L}^!_{n})\simeq \hom(\hat{L}_0,\hat{L}_1[n])^{\vee},
\end{equation*}
because $\hat{L}^!_n = C_n$. We therefore get a $G$-graded isomorphism:
\begin{equation*}
    \wedge^n V \simeq V^{\vee}.
\end{equation*}
Now the lemma follows by comparing the sum of the weights (as in (\ref{weight-decomposition})) appearing on both sides of the isomorphism above.
\end{proof}
\begin{lemma}
The group $G$ has the following presentation:
\begin{equation*}
    G = \mathbb{Z}g_0\oplus\dotsi\oplus\mathbb{Z}g_n /\langle g_0+\dotsi+g_n \rangle .
\end{equation*}
\end{lemma}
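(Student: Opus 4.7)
My plan is to reduce the problem to showing that $g_0, \dots, g_n$ generate $G$, and then to identify them geometrically. Contracting the basepoint gives $G \cong H_1((\mathbb{C}^*)^n;\mathbb{Z}) \cong \mathbb{Z}^n$, and combining this with the relation $g_0 + \dots + g_n = 0$ from the previous lemma yields a canonical surjection
\[
\varphi: F := \bigoplus_{k=0}^n \mathbb{Z} g_k \big/ \langle g_0 + \dots + g_n \rangle \twoheadrightarrow G.
\]
The source $F$ is the cokernel of a primitive map $\mathbb{Z} \to \mathbb{Z}^{n+1}$ and is therefore itself free abelian of rank $n$, so a surjection $\varphi$ between free abelian groups of the same rank is automatically an isomorphism. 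It suffices to check surjectivity, i.e.\ that the $g_k$ span $G$.

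The key step is to match the weights $g_k$ with the asymptotic directions of the Hori-Vafa potential $W_{\cl} = y_1 + \dots + y_n + (y_1\cdots y_n)^{-1}$. Each of its $n+1$ monomial terms picks out a distinct end of $(\mathbb{C}^*)^n$, and the $n+1$ generators of $V \simeq HW(\hat{L}_{k+1,0}, \hat{L}_{k,0})$ are represented by short Hamiltonian chords running into those ends. Tracking the relative homology class of each such chord through the projection $\pi : \hat{G} \to G$, the chord attached to the monomial $y_i$ contributes the standard coordinate loop $e_i \in H_1((\mathbb{C}^*)^n)$ for $i = 1, \dots, n$, while the chord attached to $(y_1 \cdots y_n)^{-1}$ contributes $-(e_1 + \dots + e_n)$. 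Since $\{e_1, \dots, e_n, -e_1 - \dots - e_n\}$ visibly generates $\mathbb{Z}^n$, $\varphi$ is surjective and hence an isomorphism.

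The hard part will be making this identification rigorous: one has to verify that the Floer-theoretic generators of $V$ really are represented by chords along the $n+1$ asymptotic monomial directions, and that the $G$-degrees these chords pick up are precisely the standard coordinate classes in $H_1((\mathbb{C}^*)^n)$. A cleaner, more formal alternative is to transport the statement across the equivalence $\theta$ of Theorem~\ref{HMS for projective space}: on the $B$-side, the grading group $G$ corresponds to a torus-equivariant grading on $D^b \Coh(\mathbb{P}^n)$, the $g_k$ correspond to the weights of the coordinate sections of $\matheu{O}(1)$, and the required presentation of $G$ reduces to the standard presentation of the character lattice of the torus acting on $\mathbb{P}^n$ by the rays of its fan.
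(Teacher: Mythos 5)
Your reduction is the same as the paper's: since $G$ is free abelian of rank $n$ and the previous lemma gives the relation $g_0+\dots+g_n=0$, it suffices to show that $g_0,\dots,g_n$ generate $G$, and a surjection between free abelian groups of rank $n$ is an isomorphism. That part is fine. The problem is that you never actually prove the surjectivity. Your main route rests on the claim that the Floer-theoretic generators of $V$ are represented by chords running into the $n+1$ toric ends of $(\mathbb{C}^*)^n$ and that their degrees in $G=H_1((\mathbb{C}^*)^n,x_0)$ are $e_1,\dots,e_n,-(e_1+\dots+e_n)$ in some basis. You acknowledge yourself that this is ``the hard part''; but it is not a technicality to be deferred, it is the entire content of the lemma. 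The weights $g_k$ are defined abstractly through the weight decomposition (\ref{weight-decomposition}) of $V$, which comes out of the algebraic computation of Futaki--Ueda via Lemma \ref{graded-hom-spaces}; nothing established so far identifies those abstract weights with explicit asymptotic chord classes, and carrying out that identification would require a genuine geometric analysis of the thimbles $\hat{L}_{k,0}$ near infinity that you do not supply.

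Your proposed ``cleaner'' alternative is worse: it is circular. Theorem \ref{HMS for projective space} gives the equivalence $\theta$ and its effect on objects, but it says nothing about $\theta$ intertwining the topological $G$-grading on the $A$-side with the toric grading on $D^b\Coh(\mathbb{P}^n)$. Establishing exactly that compatibility (via the isomorphism $\alpha:G\rightarrow\mathbb{Z}^n$) is the point of the theorem that immediately follows this lemma, and that theorem uses the present lemma as input; so you cannot transport the presentation of $G$ across $\theta$ from the character lattice of the torus. The paper's own argument avoids any explicit identification of the $g_k$: letting $G'\subseteq G$ be the subgroup they generate, Lemma \ref{graded-hom-spaces} shows all the stopped groups $HW(\hat{L}_i,\hat{L}_j)$, hence also $HW(L_i,L_j)$, are $G'$-graded, and the colimit over the wrapping sequence computes the fully wrapped algebra $\varinjlim_i HW(L_0,L_i)=\matheu{W}(L_0)\simeq k[G]$; since the latter is then supported on $G'$, one gets $G'=G$. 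If you want to salvage your approach, you either need to carry out the asymptotic chord computation honestly, or replace it by an argument of this kind that sidesteps it.
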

\begin{proof}
Because of the previous lemma, together with the fact that $G$ is a free abelian group of rank $n$, it suffices to show that the elements $g_i$ generate the group $G$. Let $G'\subseteq G$ be the subgroup generated by $g_0,\dots,g_n$. Because of Lemma \ref{graded-hom-spaces}, all of the partially wrapped Floer cohomology vector spaces $HW(\hat{L}_i,\hat{L}_j)$ are $G'$-graded. Next, using the isomorphisms:
\begin{equation*}
    HW(L_i,L_j) \rightarrow HW(\hat{L}_{0,i},\psi(\hat{L}_j)),
\end{equation*}
we deduce that the cohomology vector spaces $HW(L_i,L_j)$ are also $G'$-graded. At the same time, the wrapping sequence:
\begin{equation*}
L_0\rightarrow L_1 \rightarrow \dotsi \rightarrow L_i\rightarrow \dotsi   
\end{equation*}
computes the fully (unstopped) wrapped Floer cohomology algebra $\matheu{W}(L_0)$ of $L_0$, as the limit:
\begin{equation*}
    \varinjlim_i HW(L_0,L_i) = \matheu{W}(L_0).
\end{equation*}
It follows that the (unstopped) wrapped Floer cohomology is also $G'$-graded. However, the later is canonically given by:
\begin{equation*}
\matheu{W}(L_0) \simeq k[G],    
\end{equation*}
where the right hand side is the group algebra of $G$. As a consequence, $G'=G$ and the Lemma follows.
\end{proof}
we reorganize all of the previous discussion in the following theorem.
\begin{theorem}
There is a group isomorphism $\alpha:G\rightarrow \mathbb{Z}^n$ and an equivalence of triangulated categories:
\begin{equation*}
    \theta: D^{\pi}\FS((\mathbb{C}^*)^n,W_{\cl}) \rightarrow D^b\Coh(\mathbb{P}^n),
\end{equation*}
with the following properties:
\begin{itemize}
    \item[-] At the level of objects, we have $\theta(\hat{L}_{k,i}) = \matheu{O}_{\mathbb{P}^n}(-k+i(n+1))$. We also use the notation $\hat{L}_d=\hat{L}_{k,i}$ whenever $d=-k+i(n+1)$.
    \item[-] At the level of hom-spaces, the linear isomorphisms: 
    $$\theta: \hom(\hat{L}_i,\hat{L}_j)\rightarrow \hom(\matheu{O}_{\mathbb{P}^n}(i),\matheu{O}_{\mathbb{P}^n}(j))$$
    map a Hamiltonian chord of topological degree $g\in G$, to the monomial $x^{\alpha(g)}$.
\end{itemize}
\end{theorem}
\begin{remark}
In item 2 of the previous theorem, in the case where $j<i$, we still think of $\hom(\matheu{O}_{\mathbb{P}^n}(i),\matheu{O}_{\mathbb{P}^n}(j))$ as a vector space of monomials by means of Serre duality:
\begin{equation*}
    \hom(\matheu{O}_{\mathbb{P}^n}(i),\matheu{O}_{\mathbb{P}^n}(j)) \simeq \hom(\matheu{O}_{\mathbb{P}^n}(j),\matheu{O}_{\mathbb{P}^n}(i-n-1))^{\vee}[n].
\end{equation*}
\end{remark}
%%%%%%%%%%%%%%%%%%%%%%%
%%%%%%%%%%%%%%%%%%%%%%%
%%%%%%%%%%%%%%%%%%%%%%%
\subsection{B-side calculations}
We now carry out some calculations on the algebraic geometry side of homological mirror symmetry to understand the category $\Perf(X_0)$. We will heavily rely on the structure of the cyclic covering map $\phi:X_0\rightarrow \mathbb{P}^n$ and the action of $\mathbb{Z}_{n+1}$ on $X_0$ as deck transformations. To begin with, observe that for any coherent sheaf $\matheu{G}$ on $X_0$, we have a natural isomorphism of sheaf cohomology:
\begin{equation}\label{adjunction-iso}
    \hom^i(\matheu{O}_{X_0},\matheu{G})\rightarrow \hom^i(\matheu{O}_{\mathbb{P}^n},\phi_*\matheu{G}).
\end{equation}
It comes from a composition of the pushforward map:
\begin{equation*}
 \hom^i(\matheu{O}_{X_0},\matheu{G})\rightarrow \hom^i(\phi_* \matheu{O}_{X_0},\phi_*\matheu{G}),   
\end{equation*}
with the structure map $\iota:\matheu{O}_{\mathbb{P}^n}\rightarrow \phi_* \matheu{O}_{X_0}$. Because $\phi$ is a cyclic covering, we actually have an isomorphism of $\matheu{O}_{\mathbb{P}^n}$-modules:
\begin{equation}\label{weight decomposition}
    \phi_*\matheu{O}_{X_0} \simeq \mathscr{E},
\end{equation}
where $\mathscr{E}$ is the locally free sheaf:
\begin{equation*}
   \mathscr{E} = \matheu{O}_{\mathbb{P}^n}\oplus \matheu{O}_{\mathbb{P}^n}(-1)\oplus \dotsi\oplus \matheu{O}_{\mathbb{P}^n}(-n).
\end{equation*}
This isomorphism endows $\mathscr{E}$ with the structure of a sheaf of $\matheu{O}_{\mathbb{P}^n}$-algebras, which in turn completely determines $X_0$. We also remind the reader that the vector bundle $\mathscr{E}$  split-generates the triangulated category $D^b\Coh(\mathbb{P}^n)$. We fix an injective resolution $I$ of the structure sheaf $\matheu{O}_{X_0}$, and we use it to build a dg-model $\mathscr{C}_{\dg}$ for $\Perf(X_0)$ as follows:
\begin{equation}\label{dg-model}
    \mathscr{C}_{\dg}(i,j) = \hom_{X_0}^{\bullet}(I(i),I(j)).
\end{equation}
Because $\phi$ is a finite map, the sheaf $\phi_*I$ is an injective resolution for $\mathscr{E}$. We can therefore use it to produce a dg-model for  $\mathbb{P}^n$ as well:
\begin{equation*}
 \mathscr{A}_{\dg}(i,j) = \hom_{\mathbb{P}^n}^{\bullet}(\phi_*I(i),\phi_*I(j)). 
\end{equation*}
Note in particular that we have a dg-pushforward map:
\begin{equation*}
    \phi_*: \mathscr{C}_{\dg} \rightarrow \mathscr{A}_{\dg}.
\end{equation*}
At the level of cohomology, this functor becomes a faithful (but not full) embedding $H(\phi):H(\mathscr{C}_{\dg}) \rightarrow H(\mathscr{A}_\dg)$. The next lemma shows an instance of how the image of $H(\phi)$ remembers the cyclic covering it came from.

\begin{lemma}\label{pushforward-calculation}
Let $X_f = V(t^{n+1}-f(x_0,\dots,x_n))\subseteq \mathbb{P}^{n+1}$ be a degree $n+1$ hypersurface, and let $\phi:X_f\rightarrow\mathbb{P}^n$ be the branched covering map that "forgets t". The pushforward of the homomorphism $(-)\times t: \matheu{O}_{X_f}\rightarrow \matheu{O}_{X_f}(1)$ using the covering map $\phi$ has the formula:
\begin{equation*}
    \phi_*((-)\times t) = \emph{\id}_{\matheu{O}}\oplus \emph{\id}_{\matheu{O}(-1)} \oplus \dotsi \oplus \emph{\id}_{\matheu{O}(-n+1)}\oplus (\matheu{O}(-n)\xrightarrow{(-)\times f} \matheu{O}(1)).
\end{equation*}
\end{lemma}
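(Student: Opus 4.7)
The plan is to work in local coordinates adapted to the $\mathbb{Z}_{n+1}$ Deck group action so that both $\phi_*\matheu{O}_{X_f}$ and $\phi_*\matheu{O}_{X_f}(1)$ appear in weight-eigenspace form, after which the multiplication by $t$ can be read off summand-by-summand. To set this up, I would first make the isomorphism (\ref{weight decomposition}) explicit: on the chart $U_i=\{x_i\neq 0\}\subseteq\mathbb{P}^n$, the preimage $\phi^{-1}(U_i)$ has affine coordinate ring $\matheu{O}(U_i)[\tau]/(\tau^{n+1}-f/x_i^{n+1})$, where $\tau=t/x_i$. This decomposes as $\bigoplus_{k=0}^n \matheu{O}(U_i)\cdot\tau^k$, with $\tau^k$ spanning the weight-$k$ eigenspace of the cyclic action $t\mapsto\zeta t$. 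A short check on overlaps shows that $\tau^k$ transforms by $(x_j/x_i)^k$, identifying the weight-$k$ eigenspace globally with $\matheu{O}(-k)$ via $\tau^k\leftrightarrow 1/x_i^k$.

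Next, using $\phi^*\matheu{O}_{\mathbb{P}^n}(1)=\matheu{O}_{X_f}(1)$ together with the projection formula, I would obtain
\[
\phi_*\matheu{O}_{X_f}(1)=\phi_*\matheu{O}_{X_f}\otimes\matheu{O}_{\mathbb{P}^n}(1)=\matheu{O}(1)\oplus\matheu{O}\oplus\cdots\oplus\matheu{O}(-n+1),
\]
with weight-$k$ summand equal to $\matheu{O}(1-k)$ and locally generated by $\tau^k$ viewed now as a section of $\matheu{O}_{X_f}(1)$. Multiplication by $t$ is a weight-$1$ operator, so it sends weight $k$ of the source to weight $(k+1)\bmod(n+1)$ of the target. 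For $0\leq k\leq n-1$, the identity $\tau^k\cdot t=x_i\cdot\tau^{k+1}$ shows that the generator of $\matheu{O}(-k)$ is sent to the generator of $\matheu{O}(-k)$, so the induced map is $\id:\matheu{O}(-k)\to\matheu{O}(-k)$.

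The only non-trivial case is the wrap-around at $k=n$: using $\tau^{n+1}=f/x_i^{n+1}$, the product $\tau^n\cdot t=f/x_i^n$ lands in the weight-$0$ eigenspace, which in the target is the $\matheu{O}(1)$-summand. Unwinding the frame identifications, this is precisely multiplication by the global section $f\in H^0(\matheu{O}_{\mathbb{P}^n}(n+1))$, giving the map $\matheu{O}(-n)\xrightarrow{(-)\times f}\matheu{O}(1)$. Assembling the $n+1$ components yields the claimed formula. The main place to be careful is verifying that the local frames $\tau^k$ genuinely glue to $\matheu{O}(-k)$ with the correct transition functions; once this is checked, the formula drops out directly from the defining relation $t^{n+1}=f$.
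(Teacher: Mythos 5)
Your argument is correct and is essentially the paper's proof: both rest on the weight decomposition of $\phi_*\matheu{O}_{X_f}$ under the cyclic deck action, in which the weight-$k$ summand $\matheu{O}(-k)$ is generated by $t^k$, so that multiplication by $t$ shifts the weight by one and wraps around via $t^{n+1}=f$, producing the final component $\matheu{O}(-n)\xrightarrow{(-)\times f}\matheu{O}(1)$. The paper packages this globally as the graded-module decomposition $S=R\oplus R(-1)\oplus\dotsi\oplus R(-n)$ of $S=R[t]/(t^{n+1}-f)$ over $R=\mathbb{C}[x_0,\dots,x_n]$, with $R(-k)\hookrightarrow S$ given by multiplication by $t^k$, and reads off $(-)\times t$ there; your chart-by-chart computation with $\tau=t/x_i$, the transition functions $(x_j/x_i)^k$, and the projection formula is exactly the local verification of that one-line global statement.
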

\begin{proof}
Let $R=\mathbb{C}[x_0,\dots,x_n]$ and $S=R[t]/(t^{n+1}-f)$ be the homogeneous coordinate rings defining the varieties $\mathbb{P}^n$ and $X_f$, respectively. Then the line bundle decomposition in (\ref{weight decomposition}) is the sheafy version of the direct sum decomposition of graded $R$-modules:
\begin{equation*}
    S = R \oplus R(-1)\oplus \dotsi\oplus R(-n),
\end{equation*}
where the inclusion $R(-k)\rightarrow S$ is multiplication by $t^k$. The Lemma then follows from interpreting the map $(-)\times t \in \hom_S(S,S(1))$ in terms of this decomposition.
\end{proof}

The previous lemma (at least in principle) is enough to determine the entire image of the functor $H(\phi)$. However, there is another approach that we favor in doing this computation, and it involves extra grading data that our categories come with.

We now explain how the categories $H(\mathscr{A}_\dg)$ and $H(\mathscr{C}_\dg)$ carry a grading by $\mathbb{Z}^{n}$ that we call the \emph{toric grading}. We begin by fixing an action of $T=(\mathbb{C}^*)^n$ on $\mathbb{P}^n$ and $X_0$ as follows:
\begin{align}
    (\zeta_1,\dots,\zeta_n)\cdot [x_0:\dots:x_n] &= [\zeta^{-1} x_0:\zeta_1x_1:\dots:\zeta_nx_n] \ \ \text{on}\ \mathbb{P}^n, \\
    (\zeta_1,\dots,\zeta_n)\cdot [t:x_0:\dots:x_n] &= [t:\zeta^{-1} x_0:\zeta_1x_1:\dots:\zeta_nx_n]\ \ \text{on}\ X_0,\notag
\end{align}
where:
\begin{equation*}
    \zeta = \zeta_1\zeta_2\dotsi\zeta_n.
\end{equation*}
Note in particular that $\phi:X_0\rightarrow\mathbb{P}^n$ is $T$-equivariant.

Let $Y$ be a projective variety with an action of $T$ on it. This action produces a \emph{consistent} choice of isomorphisms for all $\zeta\in T$:
\begin{align*}
    \matheu{O}_Y &\rightarrow \zeta^*\matheu{O}_Y \\
    g&\mapsto \zeta^*g,
\end{align*}
that pulls-back regular functions on open subsets of $Y$ using the torus action. This \emph{consistent} choice of isomorphisms is called a \emph{linearization}; we refer the reader to \cite{Derived-McKay} for a more detailed treatment of this idea. If $D\subseteq Y$ is $T$-invariant divisor, then we can similarly pull-back meromorphic functions to produce a linearization of $\matheu{O}_Y(D)$. When two coherent sheaves $\matheu{F}$ and $\matheu{G}$ are linearized, the vector space $\hom_Y(\matheu{F},\matheu{G})$ carries a $T$-action via the diagram:

\begin{equation}\label{commutative-diagram}
    \begin{tikzcd}
    \matheu{F} \arrow[r,"\sigma"]\arrow[d]&
    \matheu{G}\arrow[d]&\\
   t^*\matheu{F}\arrow[r,"t^*\sigma"]& t^*\matheu{G}. &
    \end{tikzcd}
\end{equation}

As a consequence, the finite dimensional $T$-representation $\hom_Y(\matheu{F},\matheu{G})$ carries a weight-decomposition, which is the toric grading by $\mathbb{Z}^{n}$ that we have alluded to before. By specializing the previous discussion to $Y=\mathbb{P}^n$, and then to $Y=X_0$, we deduce the following:
\begin{lemma}
The categories $H(\mathscr{A}_\dg)$ and $H(\mathscr{C}_{\emph{\dg}})$ carry toric gradings by $\mathbb{Z}^{n}$. Furthermore, because $\phi$ is $T$-equivariant, the functor $H(\phi)$ respects this grading.\qed
\end{lemma}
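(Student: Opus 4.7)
The plan is to build a $T$-equivariant enhancement of the dg-categories $\mathscr{A}_{\dg}$ and $\mathscr{C}_{\dg}$, then extract the toric grading as the weight decomposition of the resulting $T$-representations on hom-complexes. The starting point is that every line bundle $\matheu{O}_{X_0}(i)$ (and likewise $\matheu{O}_{\mathbb{P}^n}(i)$) admits a canonical $T$-linearization coming from the $T$-invariant hyperplane divisor $\{x_0=0\}$, or a positive multiple thereof. Because $\phi$ is $T$-equivariant, the induced linearization on $\phi_*\matheu{O}_{X_0}(i)$ agrees with the one obtained directly on the $\matheu{O}_{\mathbb{P}^n}$-module side.

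Next, I would upgrade the injective resolution $I$ of each $\matheu{O}_{X_0}(i)$ to a $T$-equivariant injective resolution. The category of $T$-equivariant quasi-coherent sheaves on a $T$-variety has enough injectives, and since $T$ is linearly reductive the forgetful functor sends equivariant injectives to ordinary injectives, so such a resolution is still injective in the usual sense. Since $\phi$ is affine, $\phi_*$ is exact and carries $T$-equivariant injectives to $T$-equivariant injectives, so $\phi_*I$ may serve as the chosen equivariant resolution on the $\mathbb{P}^n$-side. With these choices in place, the commutative diagram (\ref{commutative-diagram}) endows each chain complex $\mathscr{C}_{\dg}(i,j)$ and $\mathscr{A}_{\dg}(i,j)$ with a $T$-action by chain maps, and reductivity of $T$ lets us decompose each such complex, and in particular its cohomology, into weight spaces. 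This is precisely the toric $\mathbb{Z}^{n}$-grading on $H(\mathscr{C}_{\dg})$ and $H(\mathscr{A}_{\dg})$.

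For the compatibility statement, observe that the chain-level functor $\phi_*:\mathscr{C}_{\dg}\to\mathscr{A}_{\dg}$ is defined by applying $\phi_*$ to $T$-equivariant morphisms $I(i)\to I(j)$, and so commutes strictly with the $T$-actions on both sides. Hence it preserves weight spaces, and the same remains true after passing to cohomology, so $H(\phi)$ respects the toric grading.

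The main obstacle is the bookkeeping needed to choose equivariant injective resolutions coherently across all twists $i\in\mathbb{Z}$, so that $\phi_*$ of the chosen resolution on $X_0$ equals the chosen resolution on $\mathbb{P}^n$ on the nose rather than merely up to quasi-isomorphism. Once this is arranged, the rest of the argument is essentially formal and follows from the definition of a linearization together with the $T$-equivariance of $\phi$.
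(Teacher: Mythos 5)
Your overall strategy is the same one the paper relies on (linearize the line bundles using $T$-invariant divisors, let $T$ act on hom-spaces by conjugation as in the paper's diagram, take the weight decomposition, and use $T$-equivariance of $\phi$ for compatibility), and your extra care at the chain level is welcome since the lemma concerns the cohomology categories, whose homs are full Ext groups. However, one step as stated is false: linear reductivity of $T$ does \emph{not} imply that the forgetful functor from $T$-equivariant quasi-coherent sheaves to quasi-coherent sheaves sends injectives to injectives. The forgetful functor is a \emph{left} adjoint (its right adjoint is the averaging functor $J\mapsto J\otimes_k \matheu{O}(T)$), so there is no formal reason for it to preserve injectives, and it genuinely fails: for the free action of $T=\mathbb{G}_m$ on itself the equivariant category is equivalent to vector spaces, so $\matheu{O}_{\mathbb{G}_m}$ is equivariantly injective, yet $k[x,x^{-1}]$ is not an injective module over itself (it is not divisible). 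So "equivariant injective resolution" does not automatically give a complex whose hom-complexes compute the non-equivariant Ext groups, which is what you need to match the paper's $\mathscr{A}_{\dg}$, $\mathscr{C}_{\dg}$.

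The gap is repairable by a standard device, and this is where your argument should be patched: resolve each linearized line bundle by sheaves of the form $J\otimes_k\matheu{O}(T)$ with $J$ an ordinary injective (these are injective both equivariantly, by the adjunction with the exact forgetful functor, and non-equivariantly, since on a Noetherian scheme direct sums of injectives are injective), or avoid resolutions altogether: since the grading is only claimed on $H(\mathscr{A}_{\dg})$ and $H(\mathscr{C}_{\dg})$, you can define the $T$-action on each $\operatorname{Ext}^k$ by transport of structure through the automorphism $\zeta$ and the chosen linearizations — exactly the paper's diagram (\ref{commutative-diagram}) applied functorially to Ext — and observe that this action is algebraic on finite-dimensional spaces, hence diagonalizable, giving the $\mathbb{Z}^n$-weight grading; $T$-equivariance of $\phi$ (together with flatness of the finite map $\phi$, which makes $\phi_*$ exact and injective-preserving) then gives compatibility of $H(\phi)$ with the grading. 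The remaining points of your proposal (canonical linearizations from $T$-invariant divisors, compatibility of the linearization on $\phi_*\matheu{O}_{X_0}(i)$, strict commutation of the chain-level pushforward with the $T$-actions) are correct and are essentially the content the paper leaves implicit.
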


Going back to the discussion following Lemma \ref{pushforward-calculation}, we get a practical description of the pushforward map as follows:
\begin{lemma}\label{pushforward-calculation-2}
For each integer $d$, and $v\in\mathbb{Z}^n$, there is at most one monomial in $\hom_{X_0}(\matheu{O}_{X_0},\matheu{O}_{X_0}(d))$ whose toric degree is $v$. Moreover, when such a monomial exists, its pushforward using $\phi$ is the sum of all $n+1$ monomials of degree $v$ in the direct sum decomposition of $\hom_{\mathbb{P}^n}(\mathscr{E},\mathscr{E}(d))$.
\end{lemma}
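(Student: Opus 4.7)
The plan is to use $T$-equivariance of $\phi$ to reduce both assertions to weight space bookkeeping, and then invoke the explicit matrix formula for $\phi_*$ on a monomial that was already extracted in Lemma~\ref{pushforward-calculation}. First I would fix compatible $T$-linearizations of $\matheu{O}_{X_0}(d)$ and $\matheu{O}_{\mathbb{P}^n}(d)$ induced from the ambient toric action on $\mathbb{P}^{n+1}$; with these in place the pushforward $\phi_*:\hom_{X_0}(\matheu{O}_{X_0},\matheu{O}_{X_0}(d))\to \hom_{\mathbb{P}^n}(\mathscr{E},\mathscr{E}(d))$ is a $T$-equivariant map of representations, and the decomposition $\mathscr{E}=\matheu{O}\oplus\matheu{O}(-1)\oplus\dotsi\oplus\matheu{O}(-n)$ becomes a decomposition of equivariant summands in which the $j$-th summand is generated by $t^j$.

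For the first assertion, I would identify the source hom-space with $S_d$ and use its reduced basis of monomials $t^k x_0^{a_0}\dotsi x_n^{a_n}$ (with $0\leq k\leq n$ and total degree $d$). These are $T$-eigenvectors by construction, so it is enough to show that two distinct reduced monomials carry distinct toric weights. Any coincidence of weights would force their ratio, a character of the ambient torus, to restrict trivially to $T$, but the subgroup of ambient characters trivial on $T$ is generated by $t^{n+1}/(x_0\dotsi x_n)$, whose $t$-exponent $n+1$ is incompatible with two reduced exponents lying in $\{0,\dots,n\}$. Hence each toric degree $v$ is realised by at most one reduced monomial.

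For the second assertion, I would apply the computation underlying Lemma~\ref{pushforward-calculation} to a single monomial $f=t^k x_0^{a_0}\dotsi x_n^{a_n}$. Writing $f\cdot t^j$ in the reduced basis produces exactly $n+1$ nonzero matrix entries of $\phi_*(f)$: for each $j\in\{0,\dots,n-k\}$ a copy of $x_0^{a_0}\dotsi x_n^{a_n}$ in $\hom(\matheu{O}(-j),\matheu{O}(d-j-k))$, and for each $j\in\{n-k+1,\dots,n\}$ a copy of $x_0^{a_0+1}\dotsi x_n^{a_n+1}$ in $\hom(\matheu{O}(-j),\matheu{O}(d-j-k+n+1))$, each with coefficient $+1$ coming from the reduction $t^{n+1}=x_0\dotsi x_n$. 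By $T$-equivariance every one of these entries has toric weight equal to the weight $v$ of $f$. Conversely, solving the weight equation in the $(i,j)$-summand amounts to requiring that $k-i+j$ be divisible by $n+1$, and the bounds $0\leq i,j,k\leq n$ leave only the two possibilities $k-i+j=0$ and $k-i+j=n+1$, which single out exactly the ``shifted diagonals'' listed above; within each contributing summand, total degree then pins the monomial uniquely. So the $n+1$ entries of $\phi_*(f)$ exhaust all weight-$v$ monomials in $\hom(\mathscr{E},\mathscr{E}(d))$, and their sum is $\phi_*(f)$.

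The delicate point will be fixing the $T$-linearization on the equivariant decomposition of $\mathscr{E}$ so that the weight of the generator $t^j$ satisfies $(n+1)\cdot\mathrm{wt}(t)=(1,\dots,1)$ in the character lattice, since the naive $(\mathbb{C}^*)^n$-action written on $\mathbb{P}^n$ only lifts to $X_0$ after passing to the covering torus $\{\mu^{n+1}=\lambda_0\dotsi\lambda_n\}/\mathbb{C}^*$; this is the one piece of bookkeeping that makes the weight equation cut out the correct two diagonals. Once this is set up carefully, everything else is a direct check.
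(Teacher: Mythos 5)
Your argument is essentially the paper's: the uniqueness claim is the observation that two degree-$d$ monomials of equal toric weight have ratio a power of $t^{n+1}/(x_0\dotsi x_n)$ and hence coincide in the coordinate ring of $X_0$, and the pushforward is read off from the decomposition $S=R\oplus R(-1)t\oplus\dotsi\oplus R(-n)t^n$ exactly as in Lemma \ref{pushforward-calculation}. You are in fact more explicit than the paper on the exhaustion step (that the $n+1$ matrix entries of $\phi_*(f)$ are \emph{all} the weight-$v$ monomials of $\hom_{\mathbb{P}^n}(\mathscr{E},\mathscr{E}(d))$): the congruence $k-i+j\equiv 0 \pmod{n+1}$ with $0\le i,j,k\le n$ is the right bookkeeping, and your insistence that the summand generated by $t^j$ carry the corresponding twist is exactly what makes the count come out to $n+1$ rather than something larger. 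Your ``delicate point'' about the covering torus is well placed but already resolved by the paper's implicit convention: assigning the monomial $t^{\alpha}x_0^{\alpha_0}\dotsi x_n^{\alpha_n}$ the degree $(\alpha_1-\alpha_0,\dots,\alpha_n-\alpha_0)$ amounts to acting by $x_0\mapsto(\zeta_1\dotsi\zeta_n)^{-1}x_0$, $x_k\mapsto\zeta_k x_k$, $t\mapsto t$, which does preserve $X_0$ and is precisely the covering-torus action you describe, so your weight equation is unchanged. The one genuine omission is the case $d<0$, which the statement includes: there the hom spaces live in cohomological degree $n$ and ``monomial'' must be interpreted through the Serre-duality convention of the preceding section; the paper closes this case in one line by noting that Serre duality respects the toric grading, and you should add that sentence to complete the proof.
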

\begin{proof}
Consider two degree $d\geq 0$ monomials on $X_0$:
\begin{equation*}
    t^{\alpha}x_0^{\alpha_0}\dots x_n^{\alpha_n} \ \ \ \text{and}\ \ \ t^{\beta}x_0^{\beta_0}\dots x_n^{\beta_n}.
\end{equation*}
Their toric degrees (respectively) are $(\alpha_1-\alpha_0,\dots,\alpha_n-\alpha_0)$ and $(\beta_1-\beta_0,\dots,\beta_n-\beta_0)$. For the two toric degrees to agree, we need the difference $\alpha_k-\beta_k$ to be independent of $k=0,1,\dots,n$. At the same, the two monomials have the same polynomial degree $d$. It follows that:
\begin{equation*}
    \beta-\alpha = (n+1)(\alpha_k-\beta_k),
\end{equation*}
for all $k=0,1,\dots,n$. We can however show using these identities that:
\begin{equation*}
    \frac{t^{\beta}x_0^{\beta_0}\dots x_n^{\beta_n}}{t^{\alpha}x_0^{\alpha_0}\dots x_n^{\alpha_n}} = \left(\frac{t^{n+1}}{x_0\dots x_n}\right)^{\alpha_0-\beta_0}.
\end{equation*}
It follows that the two monomials are equal in $\hom_{X_0}(\matheu{O}_{X_0},\matheu{O}_{X_0}(d))$. 

The second part of the Lemma can be proved in exactly the same way as Lemma \ref{pushforward-calculation}.
Finally, the case $d<0$ follows from Serre duality which also respects the toric grading:
\begin{equation*}
 \hom_{X_0}(\matheu{O}_{X_0}(d),\matheu{O}_{X_0}(-1))\otimes \hom_{X_0}(\matheu{O}_{X_0},\matheu{O}_{X_0}(d))\rightarrow k[n].    
\end{equation*}
\end{proof}
By identifying the toric grading on perfect complexes with the topological grading on Fukaya-Seidel categories, we prove the following upgrade of the isomorphism in Lemma \ref{A side isomorphism}.

\begin{lemma}
For each pair of integers $i$ and $j$, the two embeddings:
\begin{equation*}
    HW(L_i,L_j)\xrightarrow{\theta\circ\psi} \hom_{\mathbb{P}^n}(\mathscr{E}(i),\mathscr{E}(j)) \xleftarrow{\phi_*} \hom_{X_0}(\matheu{O}_{X_0}(i),\matheu{O}_{X_0}(j)),
\end{equation*}
have the same image.
\end{lemma}

\begin{proof}
Indeed, let $p\in HW(L_i,L_j)$ be an intersection point of topological degree $g\in G$. By definition:
\begin{equation*}
    \psi(p) = p_0+p_1+\dots +p_n,
\end{equation*}
is the sum of all intersection points in $HW(\hat{L}_{k,i_k},\hat{L}_{l,j_l})$ of topological degree $g$. It follows that in the decomposition:
\begin{equation*}
\hom_{\mathbb{P}^n}(\mathscr{E}(i),\mathscr{E}(j)) = \bigoplus_{0\leq k,l\leq n} \hom_{\mathbb{P}^n}(\matheu{O}(i-k),\matheu{O}(j-k)),
\end{equation*}
the element $\theta\circ\psi(p)$ is the sum of all monomials of degree $\alpha(g)\in \mathbb{Z}^n$. But, as in Lemma \ref{pushforward-calculation-2}, this is exactly the image under $\phi_*$ of the unique monomial in $\hom_{X_0}(\matheu{O}_{X_0}(i),\matheu{O}_{X_0}(j))$ whose degree toric degree is $\alpha(g)\in\mathbb{Z}^n$.
\end{proof}
\hspace{-1cm}{
\tikzset{every picture/.style={line width=0.75pt}} %set default line width to 0.75pt        
\begin{tikzpicture}[x=0.75pt,y=0.75pt,yscale=-.9,xscale=.9]
%uncomment if require: \path (0,333); %set diagram left start at 0, and has height of 333

%Shape: Ellipse [id:dp8291740948312052] 
\draw   (100,153) .. controls (100,78.44) and (205.66,18) .. (336,18) .. controls (466.34,18) and (572,78.44) .. (572,153) .. controls (572,227.56) and (466.34,288) .. (336,288) .. controls (205.66,288) and (100,227.56) .. (100,153) -- cycle ;
%Straight Lines [id:da42637684075175053] 
\draw    (230,32) -- (298,117) ;
%Straight Lines [id:da21344956759797284] 
\draw    (299,190) -- (239,276) ;
%Straight Lines [id:da5331769890280953] 
\draw    (392,153.5) -- (572,153) ;
%Curve Lines [id:da3570616533101014] 
\draw    (392,153.5) .. controls (439,263) and (136,189) .. (239,276) ;
%Curve Lines [id:da5966604552713746] 
\draw    (230,32) .. controls (387,42) and (45,198) .. (299,190) ;
%Curve Lines [id:da13428242394046652] 
\draw    (572,153) .. controls (421,220) and (482,-28) .. (298,117) ;
%Shape: Circle [id:dp1984321579513999] 
\draw  [fill={rgb, 255:red, 189; green, 16; blue, 224 }  ,fill opacity=1 ] (292.5,117) .. controls (292.5,113.96) and (294.96,111.5) .. (298,111.5) .. controls (301.04,111.5) and (303.5,113.96) .. (303.5,117) .. controls (303.5,120.04) and (301.04,122.5) .. (298,122.5) .. controls (294.96,122.5) and (292.5,120.04) .. (292.5,117) -- cycle ;
%Shape: Circle [id:dp904349497504255] 
\draw  [fill={rgb, 255:red, 189; green, 16; blue, 224 }  ,fill opacity=1 ] (386.75,153.5) .. controls (386.75,150.6) and (389.1,148.25) .. (392,148.25) .. controls (394.9,148.25) and (397.25,150.6) .. (397.25,153.5) .. controls (397.25,156.4) and (394.9,158.75) .. (392,158.75) .. controls (389.1,158.75) and (386.75,156.4) .. (386.75,153.5) -- cycle ;
%Shape: Circle [id:dp44460344829076104] 
\draw  [fill={rgb, 255:red, 189; green, 16; blue, 224 }  ,fill opacity=1 ] (293.5,190) .. controls (293.5,186.96) and (295.96,184.5) .. (299,184.5) .. controls (302.04,184.5) and (304.5,186.96) .. (304.5,190) .. controls (304.5,193.04) and (302.04,195.5) .. (299,195.5) .. controls (295.96,195.5) and (293.5,193.04) .. (293.5,190) -- cycle ;
%Straight Lines [id:da7826729753772703] 
\draw    (186,213) -- (274,213) ;
\draw [shift={(276,213)}, rotate = 180] [color={rgb, 255:red, 0; green, 0; blue, 0 }  ][line width=0.75]    (10.93,-3.29) .. controls (6.95,-1.4) and (3.31,-0.3) .. (0,0) .. controls (3.31,0.3) and (6.95,1.4) .. (10.93,3.29)   ;
%Straight Lines [id:da026362244311649796] 
\draw    (186,204) -- (235.04,193.42) ;
\draw [shift={(237,193)}, rotate = 527.8299999999999] [color={rgb, 255:red, 0; green, 0; blue, 0 }  ][line width=0.75]    (10.93,-3.29) .. controls (6.95,-1.4) and (3.31,-0.3) .. (0,0) .. controls (3.31,0.3) and (6.95,1.4) .. (10.93,3.29)   ;
%Straight Lines [id:da01932063287135599] 
\draw    (470,215) -- (435.11,162.66) ;
\draw [shift={(434,161)}, rotate = 416.31] [color={rgb, 255:red, 0; green, 0; blue, 0 }  ][line width=0.75]    (10.93,-3.29) .. controls (6.95,-1.4) and (3.31,-0.3) .. (0,0) .. controls (3.31,0.3) and (6.95,1.4) .. (10.93,3.29)   ;
%Straight Lines [id:da8972126894791594] 
\draw    (457,228) -- (385.94,210.48) ;
\draw [shift={(384,210)}, rotate = 373.85] [color={rgb, 255:red, 0; green, 0; blue, 0 }  ][line width=0.75]    (10.93,-3.29) .. controls (6.95,-1.4) and (3.31,-0.3) .. (0,0) .. controls (3.31,0.3) and (6.95,1.4) .. (10.93,3.29)   ;
%Shape: Circle [id:dp44204427228736587] 
\draw  [fill={rgb, 255:red, 208; green, 2; blue, 27 }  ,fill opacity=1 ] (509.75,64.5) .. controls (509.75,61.6) and (512.1,59.25) .. (515,59.25) .. controls (517.9,59.25) and (520.25,61.6) .. (520.25,64.5) .. controls (520.25,67.4) and (517.9,69.75) .. (515,69.75) .. controls (512.1,69.75) and (509.75,67.4) .. (509.75,64.5) -- cycle ;

% Text Node
\draw (125,203) node [anchor=north west][inner sep=0.75pt]   [align=left] {$\matheu{O}_{\mathbb{P}^n}(-1)$};
% Text Node
\draw (465,220) node [anchor=north west][inner sep=0.75pt]   [align=left] {$\matheu{O}_{\mathbb{P}^n}$};
% Text Node
\draw (184,53) node [anchor=north west][inner sep=0.75pt]   [align=left] {$\matheu{O}_{\mathbb{P}^n}(-2)$};
% Text Node
\draw (419,55) node [anchor=north west][inner sep=0.75pt]   [align=left] {$\matheu{O}_{\mathbb{P}^n}(1)$};
% Text Node
\draw (388,130) node [anchor=north west][inner sep=0.75pt]   [align=left] {$\id$};
% Text Node
\draw (309,180) node [anchor=north west][inner sep=0.75pt]   [align=left] {$\id$};
% Text Node
\draw (268,125) node [anchor=north west][inner sep=0.75pt]   [align=left] {$(-)\cdot x_0x_1x_2$};
% Text Node
\draw (194,302) node [anchor=north west][inner sep=0.75pt]   [align=left] {Computation of $\phi_*(t)$ on the A-side \\when $n=2$; compare with Lemma \ref{pushforward-calculation}.};
\end{tikzpicture}}
\\

\textit{Proof of Theorem \ref{hypersurface is B}.} In our setup, we have the following diagram of $A_{\infty}$-functors:
\begin{equation*}
    FS((\mathbb{C}^*)^n,W)\xrightarrow{\theta\circ\psi} \mathscr{A}_\dg \xleftarrow{\phi_*} \mathscr{C}_\dg.
\end{equation*}
Recall that the differential graded categories $\mathscr{A}_\dg$ and $\mathscr{C}_\dg$ compute $D^b\Coh(\mathbb{P}^n)$ and $\Perf(X_0)$, respectively. Using the result of the previous lemma, the functors $H(\theta\circ\psi)$ and $H(\phi_*)$ have identical images inside $H(\mathscr{A}_\dg)$, and the desired theorem follows as a consequence. \qed

\bibliographystyle{plain}
\bibliography{bibi.bib}
\end{document}